\newtheorem{lemma}{Lemma}
\newtheorem{theorem}{Theorem}
\newtheorem{definition}{Definition}
\def\ord{\operatorname{ord}\nolimits}
\begin{document}

\title[Sub-Weyl subconvexity]{Sub-Weyl subconvexity for Dirichlet $L$-functions to prime power moduli}
\author{Djordje Mili\'cevi\'c}
\address{Bryn Mawr College, Department of Mathematics, 101 North Merion Avenue, Bryn Mawr, PA 19010, U.S.A.}
\email{dmilicevic@brynmawr.edu}

\begin{abstract}
We prove a subconvexity bound for the central value $L(\frac12,\chi)$ of a Dirichlet $L$-function of a character $\chi$ to a prime power modulus $q=p^n$ of the form $L(\frac12,\chi)\ll p^rq^{\theta+\epsilon}$ with a fixed $r$ and $\theta\approx 0.1645<\frac16$, breaking the long-standing Weyl exponent barrier. In fact, we develop a  general new theory of estimation of short exponential sums involving $p$-adically analytic phases, which can be naturally seen as a $p$-adic analogue of the method of exponent pairs. This new method is presented in a ready-to-use form and applies to a wide class of well-behaved phases including many that arise from a stationary phase analysis of hyper-Kloosterman and other complete exponential sums.
\end{abstract}

\maketitle

\bigskip\bigskip

\section{Introduction and statement of results}

One of the principal questions about $L$-functions is the size of their critical values. In this paper, we address an instance of the subconvexity problem, which we describe below, and break a long-standing barrier known as the Weyl exponent for central values of certain Dirichlet $L$-functions.

In the case of the Riemann zeta-function, the distribution of values of $\zeta(1/2+it)$ for large $t$ is of central interest; see Titchmarsh \cite{Titchmarsh}. From the functional equation and the Phragm\'en-Lindel\"of principle, it follows that
\begin{equation}
\label{ZetaSubconvexity}
\left|\zeta(1/2+it)\right|\ll(1+|t|)^{\theta+\epsilon}
\end{equation}
with $\theta=\frac14$. The Lindel\"of hypothesis, the statement that \eqref{ZetaSubconvexity} holds with $\theta=0$, is a consequence of the celebrated Riemann hypothesis and lies very much out of reach of current methods, but an estimate of the form \eqref{ZetaSubconvexity} where $\theta<\frac14$ has important implications. It was proved by Hardy and Littlewood by using Weyl differencing that \eqref{ZetaSubconvexity} holds with $\theta=\frac16$. This exponent was lowered by Walfisz \cite{Walfisz} in 1924 to $\theta=\frac{163}{988}\approx 0{.}1650$; many subsequent papers slowly improved the result to the current value $\theta=\frac{32}{205}\approx 0{.}1561$, due to Huxley \cite{Hux05}.

For an automorphic representation $\pi$ of $\text{GL}(n)$, the statement that
\begin{equation}
\label{SubconvexityLfunctionsEquation}
|L(1/2,\pi)|\ll C(\pi)^{\theta+\epsilon},
\end{equation}
where $C(\pi)$ is the analytic conductor of $\pi$ as defined by Iwaniec and Sarnak \cite{IwaniecSarnakPerspectives} and $\theta=\frac14$, is known as the convexity bound and follows from the basic analytic properties of $L(s,\pi)$. The \textit{subconvexity conjecture} states that such a bound always holds for some $\theta<\frac14$. Proving a subconvex estimate for any given $L$-function requires deep arithmetic considerations and can have important arithmetic, geometric, or dynamical consequences; see surveys \cite{IwaniecSarnakPerspectives,MichelParkCity}. Many cases of subconvexity on lower-dimensional groups have been proved, often with exponents $\theta$ very close to $\frac14$. A breakthrough paper of Michel and Venkatesh \cite{MichelVenkatesh} contains a fully general subconvexity estimate for $\textrm{GL}(2)$ $L$-functions (with $\theta$ close to $\frac14$) by a geometric method and references to previous results. In some cases, $\theta=\frac16$ was proved, and such a result goes under the name of \textit{Weyl exponent}.

In the case of a Dirichlet $L$-function of a character $\chi$ modulo $q$, the corresponding statement that
\begin{equation}
\label{ChiSubconvexity}
|L(1/2,\chi)|\ll q^{\theta+\epsilon}
\end{equation}
is known only with $\theta=\frac3{16}=0{.}1875$, due to Burgess \cite{Burgess}. That the Weyl exponent $\theta=\frac16$ is not known for this family is a major source of frustration. However, building on the ideas of Postnikov \cite{Postnikov}, Barban, Linnik and Tshudakov \cite{BarbanLinnikChudakov} proved estimates allowing them to take $\theta=\frac16$ when considering Dirichlet $L$-functions $L(s,\chi)$ associated to characters $\chi$ modulo $p^n$, where $p$ is a fixed prime and $n\to\infty$. This result was generalized by Heath-Brown \cite{HeathBrownHybrid} to a hybrid bound that contains \eqref{ChiSubconvexity} with an exponent $\frac16\leqslant\theta<\frac14$ assuming that the modulus $q$ has a divisor $d$ in a suitable range, with $\theta=\frac16$ for moduli $q$ having a divisor $d\asymp q^{1/3+o(1)}$ (including all sufficiently powerful moduli). Using a very different approach, Conrey and Iwaniec~\cite{ConreyIwaniec} obtained the Weyl exponent $\theta=\tfrac16$ in the case when $\chi$ is a real (that is, quadratic) character.

The first main result of this article is the following theorem.

\begin{theorem}
\label{MainResult}
Let $\theta>\theta_0\approx 0{.}1645$ be given. There is an $r\geqslant 0$ such that
\[ L\left(\frac12,\chi\right)\ll p^r\cdot q^{\theta}(\log q)^{1/2}\]
holds for every Dirichlet character $\chi$ to any prime power modulus $q=p^n$.
\end{theorem}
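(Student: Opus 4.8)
The plan is to reduce the theorem, via a smoothed approximate functional equation, to nontrivial estimates for short character sums. Writing $q=p^n$ and $X=q^{1/2}$, one has $L(\tfrac12,\chi)=\sum_{m}\chi(m)m^{-1/2}V(m/X)+(\text{dual term})$ for a rapidly decaying $V$, so after a dyadic decomposition it suffices to save a power of $q$ over the trivial bound in $S_N(\chi):=\sum_{m\sim N}\chi(m)W(m/N)$ for every smooth weight $W$ and every $N\ll q^{1/2+\epsilon}$, with the trivial bound $S_N\ll N$ already sufficient once $N$ is small. The logarithmic factor $(\log q)^{1/2}$ in the theorem comes from combining the resulting per-block estimates across the $O(\log q)$ dyadic scales together with a careful treatment of the weights in the functional equation.

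The second step is to linearize the character. By the Postnikov formula for $\chi$ modulo $p^n$, choosing a cut level $p^k$ with $k$ in a range to be optimized, every residue $m\equiv a\pmod{p^k}$ with $(a,p)=1$ satisfies $\chi(a+p^k t)=\chi(a)\,e\big(a_\chi\,L_a(t)/p^{n-k}\big)$, where $a_\chi$ encodes $\chi$ and $L_a(t)=p^{-k}\log_p\big(1+p^k t\bar a\big)$ is, up to normalization, the $p$-adic logarithm and hence given by a $p$-adically convergent power series in $t$ whose derivatives have controlled $p$-adic valuations. Splitting $m\sim N$ into the $O(p^k)$ progressions modulo $p^k$ turns $S_N(\chi)$ into a sum of $O(p^k)$ exponential sums $\sum_t e(F(t))$ of length $\approx N/p^k$ with $p$-adically analytic phase $F$. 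This is precisely the input to the new $p$-adic method of exponent pairs developed in the body of the paper.

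The heart of the argument is then to run that exponent-pair calculus. I would install two processes acting on the admissible class of phases: an $A$-process ($p$-adic van der Corput / Weyl differencing, replacing $F$ by $F(t+h)-F(t)$ and lowering the relevant $p$-adic valuation) and a $B$-process ($p$-adic Poisson summation / completion followed by $p$-adic stationary phase), propagating an analogue of the classical exponent pair $(\kappa,\lambda)$ in which the role of the sizes $|F^{(j)}|$ is played by $p$-adic valuations of the Hasse derivatives of $F$. The nontrivial structural fact, established earlier, is that applying the $B$-process to a complete sum produces a hyper-Kloosterman sum whose $p$-adic stationary-phase expansion has a phase again lying in the admissible class; this closure property makes iteration — hence a genuine exponent-pair algorithm — possible. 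Optimizing the sequence of $A$ and $B$ processes, together with the choice of $k$, against the length $N/p^k$ and the effective modulus $p^{n-k}$, and then summing over the $O(p^k)$ progressions, over dyadic $N$, and over the dual term, yields $L(\tfrac12,\chi)\ll p^r q^{\theta_0}(\log q)^{1/2}$ with $\theta_0\approx 0.1645$ arising from the optimal finite word in $A$ and $B$, in exact analogy with Walfisz's refinement of the Weyl bound for $\zeta$.

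I expect the main obstacle to be the clean formulation and execution of the $p$-adic $B$-process: one must verify that completion plus stationary phase is legitimate for the phases at hand (nondegeneracy of the $p$-adic Hessian, existence and uniqueness of a $p$-adic critical point, and sharp control of the error), and, crucially, that the admissible class is stable under this operation so the process iterates without the accumulated $p$-power losses exceeding the fixed exponent $r$. The case $p=2$ requires separate bookkeeping, and care is needed throughout to ensure that the dependence on $p$ stays confined to the harmless factor $p^r$ and never contaminates the $q$-exponent $\theta_0$.
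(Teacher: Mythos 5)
Your overall plan is the paper's plan: approximate functional equation, Postnikov linearization of $\chi$ on $1+p^\kappa\mathbb{Z}$, splitting into progressions modulo $p^\kappa$ to reduce to short sums $\sum_t e(f(t)/p^n)$ with $f$ a $p$-adically analytic phase, estimating those by a $p$-adic exponent-pair calculus built from $A$- and $B$-processes, and invoking Rankin's optimization. Two points in your description, however, do not match how the argument actually runs.

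First, the $B$-process does not at any stage literally pass through hyper-Kloosterman sums. It is $p$-adic Poisson summation applied to the \emph{incomplete} sum $\sum e(f(m)/p^n)h(m/B)$; the Fourier transform $\hat{\mathbf{e}}_f(s)$ that appears is a complete sum with phase $f(m)p^{-w}-sm$, which is analyzed directly by $p$-adic stationary phase (Lemmas~\ref{StatPhase}--\ref{StatPhaseFourier}). The implicit function theorem (Lemma~\ref{ImplicitFunctionTheorem}) expresses the critical point as a power series $\tilde{f}(t)$, and the output phase $\breve{f}$ lies again in a class $\mathbf{F}(\breve{w},y^{-1},\dots)$, which is the closure property you correctly identify as crucial. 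Hyper-Kloosterman sums appear in section~\ref{DataPairsSection} only as motivating \emph{examples} of complete sums whose stationary-phase evaluation lands in $\mathbf{F}$; they are not an intermediate object of the $B$-process.

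Second, $\theta_0\approx 0.1645$ is an \emph{infimum} of $\frac{k+\ell}{2}-\frac14$ over exponent pairs reachable from $(0,1)$ by $A$- and $B$-processes, and Rankin's theorem shows it is not attained by any finite word. That is precisely why Theorem~\ref{MainResult} is stated for $\theta>\theta_0$ and why $r=r(\theta)$: one selects a finite word achieving some $k+\ell < 2\theta+\tfrac12$, which fixes the datum and hence the $p$-power loss $p^r$. Saying that the bound holds at $\theta_0$ with a single optimal finite word is not quite correct. (A smaller point: the paper does not dyadically decompose; Theorem~\ref{EstimateInTermsOfkl} handles the smooth weight $V$ by summation by parts against a master estimate for partial sums $\tilde{S}(t)$, and the factor $(\log q)^{1/2}$ comes from the $\delta$ in the exponent datum together with the near-logarithmic singularity of the $\tau$-integral, not from a dyadic count.)

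Apart from these, the proposal captures the structure correctly: the admissible class $\mathbf{F}$ is stable under translations, under Weyl differencing (Lemma~\ref{AprocessClassesOfFunctions}), and under the Poisson--stationary-phase duality (Theorem~\ref{SummationFormula}); the $A$- and $B$-processes transform $p$-adic exponent data exactly as in the archimedean theory; and the subconvexity exponent is $\frac{k+\ell}{2}-\frac14$ with $p^r$ absorbing the finitely many adjustments at exceptional primes and small moduli.
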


In particular, we see that, for sufficiently large $n\geqslant n_0$, Theorem~\ref{MainResult} yields the subconvexity bound \eqref{ChiSubconvexity} with $\theta<\frac16$.
We stress that, even though our method is $p$-adic, the implied constant and the values of $r$ and $n_0$ in Theorem~\ref{MainResult} depend only on the value of $\theta$ and are \textit{universal across all primes $p$} and all prime powers $q=p^n$. This is the first family of $L$-functions since Walfisz's 1924 result for the Riemann zeta-function in which a better exponent than $\frac16$ has been obtained.

\medskip

As the principal device of this paper, we develop a theory of estimation of exponential sums of the form 
\begin{equation}
\label{GeneralSum}
\sum_{M<m\leqslant M+B}e\left(\frac{f(m)}{p^n}\right),
\end{equation}
where $f(t)$ is an analytic function on the ring $\mathbb{Z}_p$ of $p$-adic integers satisfying certain conditions. Here, and throughout the paper, $e(x)$ denotes $e^{2\pi ix}$ and its obvious unique extension from $\cup_{k\in\mathbb{Z}}p^k\mathbb{Z}$ to a $\mathbb{Z}_p$-periodic function on $\mathbb{Q}_p$. In Definition~\ref{ClassesDefinition} (section~\ref{DataPairsSection}, below), we specify a class of ($p$-adically analytic) power series $\mathbf{F}$, which includes multiples $a\log_p(1+p^{\kappa}t)$ of the $p$-adic logarithm, and to which our estimates apply. Roughly speaking, series $f$ in $\mathbf{F}$ satisfy $f'(t)=p^w\omega'(1+p^{\kappa}\omega t)^{-y}+p^w\gamma_0+p^{u+w}g(t)$ with suitable parameters (which vary and are suppressed in this introduction) and a power series $g$ satisfying suitable conditions (ensuring it does not interfere with the first, leading term). We call a pair of non-negative real numbers $(k,\ell)$ a $p$-adic exponent pair if, roughly, for every $f\in\mathbf{F}$ as above, every sufficiently large $n$, and every $0<B\leqslant p^{n-w-\kappa}$,
\[ \sum_{M<m\leqslant M+B}e\left(\frac{f(m)}{p^n}\right)\ll p^r\left(\frac{p^{n-w-\kappa}}B\right)^kB^{\ell}\big(\log p^{n-w-\kappa}\big)^{\delta}, \]
with some $r$, $\delta$ depending on the exponent pair and the parameters implied in $f$. We will only need $\delta\in[0,1]$ in the exponent pairs we construct. In fact, we will be rather more precise and talk about \textit{$p$-adic exponent data} in order to track all dependencies explicitly; see Definition~\ref{ExponentPairsDefinition} (section~\ref{DataPairsSection}, below).

The heart of our method of estimating sums of the form \eqref{GeneralSum} is contained in Theorems \ref{Bprocess} and \ref{Aprocess}, which we term $B$- and $A$-processes, respectively. An immediate consequence of these results is the following compact statement.

\begin{theorem}
\label{ABprocessesTheorem}
If $(k,\ell)$ is a $p$-adic exponent pair, then so are
\[ A(k,\ell)=\left(\frac{k}{2(k+1)},\frac{k+\ell+1}{2(k+1)}\right)\quad\text{and}\quad B(k,\ell)=\left(\ell-\frac12,k+\frac12\right). \]
\end{theorem}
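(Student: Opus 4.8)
\emph{Proof proposal.} The plan is to read off this theorem directly from the two ``process'' theorems \ref{Bprocess} and \ref{Aprocess}; once those are available the only work is a short bookkeeping of parameters, which I sketch below. I would first record what the two processes must say, phrased as transformations of $p$-adic exponent data. The $B$-process (Theorem \ref{Bprocess}) should take $f\in\mathbf{F}$ with a given data tuple and a sum over a range $0<B\leqslant Q$, where $Q:=p^{n-w-\kappa}$, and produce $\tilde f\in\mathbf{F}$ with an explicitly transformed tuple, with \emph{the same effective modulus $Q$}, and with a dual range $\tilde B\asymp Q/B$, such that the original sum is a fixed power of $p$ times $B\,Q^{-1/2}$ times the $\tilde f$-sum over $\tilde B$, plus an error of size $O(\log Q)$. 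The $A$-process (Theorem \ref{Aprocess}) should reduce the $f$-sum, via $p$-adic Weyl differencing and Cauchy--Schwarz, to an average of sums whose differenced phases again lie in $\mathbf{F}$, the $h$-th such sum having effective modulus $Q_h\asymp hQ/B$ over the unchanged range $B$. Granting these, the hypothesis ``$(k,\ell)$ is a $p$-adic exponent pair'' --- i.e.\ a bound $p^{r}(Q/B)^k B^\ell(\log Q)^\delta$ for every $g\in\mathbf{F}$ --- is inserted into the output of each process, and one checks the composite is of exponent-pair shape.

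For the $B$-process this is immediate: feeding the dual sum into the pair bound gives $p^{r}(Q/\tilde B)^k\tilde B^\ell(\log Q)^\delta=p^{r}B^k(Q/B)^\ell(\log Q)^\delta$, and multiplying by the amplitude $\asymp B\,Q^{-1/2}$ produces $p^{r'}Q^{\ell-1/2}B^{k+1-\ell}(\log Q)^\delta=p^{r'}(Q/B)^{\ell-1/2}B^{k+1/2}(\log Q)^\delta$, which is exactly the bound defining the pair $(\ell-\tfrac12,\,k+\tfrac12)$. The $O(\log Q)$ stationary-phase error only forces $\delta$ up to at most $1$, still in the admissible interval $[0,1]$, and the passage from $r$ to $r'$ is the fixed $p$-power loss permitted in the definition. (This self-duality of $Q$ and the shape of the amplitude are the $p$-adic incarnation of ``completing the square'' in the associated Gauss sum; the stationary point is pinned down by Hensel's lemma.)

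For the $A$-process one gets, schematically, $\big|\sum_m e(f(m)/p^n)\big|^2\ll B^2H^{-1}+BH^{-1}\sum_{0<h\leqslant H}\big|\sum_m e((f(m+h)-f(m))/p^n)\big|$ for a $p$-power parameter $1\leqslant H\leqslant B$. Applying the $(k,\ell)$-bound to the $h$-th inner sum (with modulus $Q_h\asymp hQ/B\leqslant Q$, so $\log Q_h\leqslant\log Q$), summing $h\leqslant H$, and optimizing $H$ over $p$-powers --- the optimum being $H\asymp B^{(1+2k-\ell)/(k+1)}Q^{-k/(k+1)}$, which lies in the admissible range $1\leqslant H\leqslant B$ (the boundary regimes $H\asymp 1$ and $H\asymp B$, and the small-$h$ terms with $Q_h<B$, being disposed of by direct trivial estimates inside Theorem \ref{Aprocess}) --- collapses the two terms to $p^{r'}Q^{k/(k+1)}B^{(\ell+1)/(k+1)}(\log Q)^\delta$. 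Taking square roots gives $p^{r''}(Q/B)^{k/(2(k+1))}B^{(k+\ell+1)/(2(k+1))}(\log Q)^{\delta/2}$, precisely the bound defining $A(k,\ell)=\big(k/(2(k+1)),\,(k+\ell+1)/(2(k+1))\big)$, with $\delta$ non-increasing.

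I expect the deduction above to be essentially mechanical --- it is the classical van der Corput--Phillips arithmetic transplanted verbatim --- so the main obstacle is really Theorems \ref{Bprocess} and \ref{Aprocess} themselves: proving that the class $\mathbf{F}$ is \emph{closed} under the Legendre/stationary-phase transform and under differencing, and tracking how the defining data $(w,\kappa,u,y,\omega,\gamma_0,\dots)$ --- together with the lower-order power series $g$ in $f'(t)=p^w\omega'(1+p^\kappa\omega t)^{-y}+p^w\gamma_0+p^{u+w}g(t)$ --- transform, in particular that $g$ stays genuinely lower-order after each step (so that it never competes with the leading term) and that the stationary point is unique in the relevant residue disc. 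These are the points where $p$-adic analyticity, Newton-polygon control of $f'$, and Hensel's lemma do the real work; with them in hand, Theorem \ref{ABprocessesTheorem} is the transparent parameter arithmetic sketched here.
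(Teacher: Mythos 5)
Your proposal is correct and matches the paper's derivation: Theorem~\ref{ABprocessesTheorem} is stated there as ``an immediate consequence'' of Theorems~\ref{Bprocess} and \ref{Aprocess}, which transform a full $p$-adic exponent datum $(k,\ell,r,\delta,(n_0,u_0,\kappa_0,\lambda_0))$ into new data carrying exactly the asserted $(k,\ell)$-components, so the statement for pairs follows at once from the definition. Your parameter arithmetic checks out (although the $A$-process heuristic $Q_h\asymp hQ/B$ is the Archimedean model; the paper instead differences with spacing $p^\chi$ and sums $h$ up to $B/H$, giving a dual modulus $\asymp Q/H$ depending on $|h|_p$ rather than $|h|$ --- but this yields the identical transformed pair), and you rightly locate the genuine content in Theorems~\ref{Bprocess} and \ref{Aprocess} themselves.
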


Starting from the ``trivial'' $p$-adic exponent pair $(0,1)$, we obtain with use of Theorem~\ref{ABprocessesTheorem} further pairs: $B(0,1)=(\frac12,\frac12)$ (which corresponds to a variant of the P\'olya-Vinogradov inequality), $AB(0,1)=(\frac16,\frac23)$, and infinitely many more, including, for example, $ABA^3B(0,1)=(\frac{11}{82},\frac{57}{82})$.

Section~\ref{DataPairsSection}, among other things, also presents the intuition behind the class $\mathbf{F}$ and Theorem~\ref{ABprocessesTheorem} and describes a typical use of our method as well as further examples of phases in $\mathbf{F}$ that naturally arise in analytic number theory.

\medskip

We step back for a moment to reflect on the analogy with the Archimedean aspect. The parallel between the subconvexity problem \eqref{ZetaSubconvexity} in the $t$-aspect and and \eqref{ChiSubconvexity} in the \textit{``depth'' aspect} (level $p^n$, $n\to\infty$) is particularly natural from the adelic point of view: one focuses on ramification at a single archimedean or non-archimedean place (or at a few places at once in hybrid bounds). The best available improvements on the bound \eqref{ZetaSubconvexity} are obtained by estimating the exponential sum $\sum_{N<n\leqslant N+M}n^{it}$ in short intervals ($M\ll(1+|t|)^{1/2+\epsilon}$). The method of exponent pairs of van der Corput \cite{vdCorput1922}, Phillips \cite{Phillips}, and Rankin \cite{RankinExponent}, a fundamental tool in the theory of exponential sums, relies on the iteration of two ``processes'', the (``Archimedean'') $A$- and $B$-process, which exploit the arithmetic structure by transforming a given exponential sum into rather different sums with different ranges of summation. Graham and Kolesnik \cite{GK} give an excellent survey of the theory of exponential sums. Note that transformations of $p$-adic exponent pairs given by Theorem~\ref{ABprocessesTheorem} formally coincide with those provided by the Archimedean $A$- and $B$-processes.

The ``$q$-analogues'' of Weyl differencing allowed previous researchers to establish the Weyl exponent in the context of estimating $L(\frac12,\chi)$ with a character $\chi$ to a powerful modulus and the associated exponential sums~\cite{Postnikov,BarbanLinnikChudakov,FujiiGallagherMontgomery,HeathBrownHybrid}. In Theorem~\ref{Aprocess}, which establishes the $A$-process as a recursive process relying on a $q$-analogue of the Weyl-van der Corput inequality (Lemma~\ref{WeylDifferencing}), we embrace a different paradigm of $f$ as a $p$-adic analytic function rather than a finite (essentially cubic in the works just referenced) polynomial. This allows us to obtain more general results, leaves us flexibility for iterative estimates, and brings to the fore the analogy with the Archimedean situation, while also presenting some serious difficulties which we overcome in the proof. Vinogradov's method was also applied by Gallagher \cite{GallagherProgressions} and Iwaniec \cite{IwaniecZerosPowerful} in the study of zero-free regions for $L(s,\chi)$ near the line $\text{Re}\,s=1$ and the prime number theorem in arithmetic progressions to powerful moduli. Note that iterations of the $A$-process alone yield exponent pairs $(k,\ell)$ in which $k$ is very small and $\ell$ is very close to 1; such estimates are suitable in ranges relevant to the behavior close to the edge of the critical strip.

Theorem~\ref{Bprocess} establishes the analogue of the $B$-process. Along with Lemmas~\ref{ImplicitFunctionTheorem} and \ref{StatPhaseFourier} and Theorem~\ref{SummationFormula} which we develop in the course of proving it, this result appears to have no nontrivial precedents in the literature. Our approach involves a careful application of $p$-adic Poisson summation, with the Fourier transform $\hat{\mathbf{e}}_f(s)$ given by a complete exponential sum as in \eqref{EquationFourierTransform}. We analyze such a sum using the $p$-adic analogue of the method of stationary phase (Lemma~\ref{StatPhase}), which expresses it as a sum of contributions over all approximate critical points. In Lemma~\ref{ImplicitFunctionTheorem}, we show that all such points indeed arise from actual, non-singular $p$-adic critical points and develop a $p$-adic implicit function theorem to express the critical points through analytic functions. This analysis culminates with Lemma~\ref{StatPhaseFourier}, in which we evaluate $\hat{\mathbf{e}}_f(s)$ in appropriate ranges. We show that contributions from all approximate critical points can be collected via the $p$-adic Gaussian, and find that, for an $f\in\mathbf{F}$, $\hat{\mathbf{e}}_f(s)$ vanishes unless $s$ lies in a certain arithmetic progression (roughly) of the form $a_0+p^{\kappa}t$, in which case an extremely handsome formula
\[ \hat{\mathbf{e}}_f\big(a_0+p^{\kappa}t\big)=\epsilon p^{(n-w+\kappa)/2}e\left(\frac{\breve{f}(t)}{p^n}\right) \]
holds, with some $\breve{f}\in\mathbf{F}$. As a particularly pleasing application, we obtain, in Theorem~\ref{SummationFormula}, a summation formula in which an exponential sum involving $f$ is related to its ``dual sum'', an exponential sum involving $\breve{f}$, with a long original sum giving rise to a short dual sum and conversely. The statement of the $B$-process, Theorem~\ref{Bprocess}, follows when the existing pair $(k,\ell)$ is applied to the dual sum. In fact, the summation formula turns out to be extremely versatile, and we use it in the proof of the $A$-process (Theorem~\ref{Aprocess}) to obtain tighter estimates.

Exponential sums of the form \eqref{GeneralSum} enter the estimation of the central value $L(1/2,\chi)$ via the approximate functional equation. As we will see in section~\ref{LFunctionSection}, every character $\chi$ modulo $p^n$ satisfies $\chi(1+p^{\kappa}t)=e\big(a\log_p(1+p^{\kappa}t)/p^n\big)$ for some $a\in\mathbb{Z}_p$, with $a\in\mathbb{Z}^{\times}_p$ corresponding to primitive characters (here, we can take $\kappa=1$ for odd $p$). After splitting the Dirichlet polynomials according to classes modulo $p^{\kappa}$ and applying a $p$-adic exponent pair $(k,\ell)$, the best value of the exponent $\theta$ which can be obtained in the estimate of Theorem~\ref{MainResult} is given by $\frac{k+\ell}2-\frac14$; see Theorem~\ref{EstimateInTermsOfkl}. (In fact, while the factor $(\log q)^{1/2}$ in Theorem~\ref{MainResult} is not needed with the present slick formulation, we keep it there so that the values of $r$ and $\theta$ arising from the $p$-adic exponent data apply verbatim without modification.) In particular, the trivial pair $(0,1)$ recovers the convexity bound $\theta=\frac14$, the pair $(\frac16,\frac23)$ gives the Weyl exponent $\theta=\frac16$, while already the pair $(\frac{11}{82},\frac{57}{82})$ gives $\theta=\frac{27}{164}\approx 0{.}1646$, breaking the Weyl exponent barrier in this family. In light of Theorem~\ref{ABprocessesTheorem}, the set of $p$-adic exponent pairs obtainable by the $p$-adic $A$- and $B$-processes coincides with the classical situation. Rankin \cite{RankinExponent} found the infimum of $(k+\ell)$ over all exponent pairs obtainable by $A$- and $B$-processes; his result gives the value of $\theta_0\approx 0{.}1645$ in Theorem~\ref{MainResult}.

With trivial modifications, our proof also yields the estimate $L(1/2+it,\chi)\ll (1+|t|)^Ap^rq^{\theta}(\log q)^{1/2}$ with $A=\tfrac54$, applicable along the entire critical line; see the remark after the proof of Theorem~\ref{EstimateInTermsOfkl} for details. A hybrid bound also subconvex in $t$ or even of sub-Weyl strength in both $t$- and $q$-aspects would be very interesting, but we do not pursue it here. 

\medskip

In addition to its intrinsic interest and the context into which it puts the method of exponential sums, the importance of Theorem~\ref{MainResult} lies in how it informs our understanding of the various aspects of the subconvexity problem (including the $t$-aspect, the ``depth'' aspect with which we are concerned, and the $q$-aspect) and of the available methods. We prefer to think of our $A$- and $B$-processes not as static estimates but as dynamical ways to \textit{transform} (possibly incurring inequalities) a sum into (possibly a number of) other sums, which can in turn be transformed time and again, exploiting and transcoding the arithmetic structure present in the original sum. In this light, the fact that the analogous steps can be used in the transformations of $p$-adic and Archimedean sums indicates a deep analogy of their built-in, ``genetic'' arithmetic structures.

From a generalist point of view (such as Selberg class), it is generally believed \cite{MichelVenkatesh} that the analytic behavior of $L$-functions is controlled in a universal fashion by~the conductors $C(\pi)$. Theorem~\ref{MainResult} points at intrinsic features of the depth aspect and helps shed light on the structure that distinguishes between those families of $L$-functions in which the Weyl subconvexity exponent $\theta=\frac1{6}$ is available through current techniques from those in which the naturally obtained exponent is Burgess's $\theta=\frac3{16}$. The universality of these exponents and techniques which allow one to break them and obtain better estimates toward the Lindel\"of hypothesis were principal research themes of a 2006 workshop at the American Institute of Mathematics~\cite{RicottaOpenProblems}. Subsequent to the current paper, in \cite{BlomerMilicevic}, Blomer and the author consider the subconvexity problem $L(1/2,f\otimes\chi)\ll_f(q^2)^{\theta+\epsilon}$ for character twists of a $\text{GL}(2)$ $L$-function, in which $\theta=\frac3{16}$ is currently the best known result in general, and develop further $p$-adic tools to obtain the Weyl exponent $\theta=\frac16$ in depth aspect and corresponding estimates for twisted sums of Hecke eigenvalues. For other recent striking examples of the distinctive r\^ole played by the square-full direction in analytic number theory, see \cite{Hiary,NelsonPitaleSaha,TemplierLargeValues,Vishe}.

The close of this introduction is a good place to open several questions suggested by our work. A number of subconvexity, nonvanishing, and moments-related problems for $L$-functions have so far found stronger answers in the $t$-aspect than in the $q$-aspect. The results of the present paper and \cite{BlomerMilicevic} indicate that the analogy with the depth aspect carries over in some of them; it will be interesting to see further ways in which it intervenes and how far it goes. Quantitatively stronger or hybrid (adelic in a sense) versions of Theorem~\ref{MainResult} would also appear seriously interesting; the author has obtained some positive results in the initial investigations in this direction. Finally, our results establish a theory of short exponential sums involving $p$-adically analytic fluctuations independent of the specific application to Theorem~\ref{MainResult}. There are many applications of the method of (Archimedean) exponential sums to problems other than estimates of $L$-functions (such as in the geometry of numbers), and our results are general enough to be appropriate analogues of the machinery that is needed to break the canonical exponents in most of the better-known of these applications; it appears extremely intriguing to investigate whether some of these questions have appropriate $p$-adic analogues.

Several notations will be used throughout the paper. For a positive integer $i$, we write
\[ (y)_i=y(y-1)\cdots(y-i+1). \]
For an $y\in\mathbb{Q}^{+}$, let $\iota(y)=\max\big(0,\ord_p(y^{-1})\big)$ and $\iota'(y)=\max\big(0,\ord_py\big)$, so that $\ord_py=\iota'(y)-\iota(y)$. We also write simply $\iota$ and $\iota'$ for $\iota(y)$ and $\iota'(y)$, respectively, when the value of $y$ is unambiguous form the context. We denote $\varepsilon(y)=1$ if $\ord_py\neq 0$ and $\varepsilon(y)=0$ if $\ord_py=0$. We write $f\ll g$ or $f=\text{O}(g)$ to denote that $|f|\leqslant Cg$ for some constant $C$, or, equivalently, that $\limsup(|f|/g)<+\infty$.

\section{Preliminaries on \texorpdfstring{$p$-adic}{p-adic} analysis}
\label{Preliminaries}

In this section, we collect facts about $p$-adic exponential, logarithmic, and power series and prove several auxiliary results related to these $p$-adic series which will be useful in our later capstone estimates. The reader is encouraged to postpone details of proofs for the second reading. Much of the pain in this section comes from the occasional need, inherent in the method of exponent pairs, to deal with power series of the form $(1+p^{\kappa}t)^y$ even when $\text{ord}_py\neq 0$, and our desire to minimize losses while doing so.

Throughout this section, all formal power series have coefficients in $\mathbb{Q}_p$ unless specified otherwise. For such a series $a(t)=\sum_{k=0}^{\infty}a_kt^k$, we follow the notation of \cite{RobertpAdicAnalysis} and denote its radius of convergence by
\begin{equation}
\label{RadiusOfConvergenceEquation}
r_a=\sup\{r\geqslant 0:\lim |a_k|_pr^k=0\}=\big(\limsup |a_k|_p^{1/k}\big)^{-1}
\end{equation}
and its growth modulus by
\[ M_ra=M_r(a)=\max |a_k|_pr^k\quad (0\leqslant r<r_a). \]
Note that we may very well have $\log_pr_a\in\mathbb{R}\setminus\mathbb{Z}$ even though each $\log_p|a_k|_p$ is an integer. We will write $M_ra\doteq |a_{k_0}|_pr^{k_0}$ if there is a unique $k_0\in\mathbb{N}$ achieving the maximum and the value of $k_0$ is clear from the context; such radii $r$ are called regular.

We record the following standard fact.

\begin{lemma}
\label{LocalInjectionLemma}
Let $f(t)=\sum_{k=0}^{\infty}a_kt^k=a_0+tf_1(t)$ be a formal power series with $a_1\neq 0$, and let $0<r<r_f$ be such that $M_rf_1\doteq|a_1|_p$. Then, for every $x$,$y$ with $|x|_p,|y|_p\leqslant r$, we have $|f(x)-f(y)|_p=|a_1|_p|x-y|_p$. In other words, for every $x$, $y$ with $|x|_p,|y|_p\leqslant r$,
\[ f(x)\equiv f(y)\pmod {p^j|a_1|_p^{-1}}\iff x\equiv y\pmod{p^j}. \]
\end{lemma}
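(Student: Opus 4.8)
The plan is to reduce everything to a statement about the growth modulus of $f_1$ under translation. The key observation is that for $|x|_p, |y|_p \leqslant r$, we can write $f(x) - f(y) = \sum_{k=1}^{\infty} a_k(x^k - y^k) = (x-y)\sum_{k=1}^{\infty} a_k(x^{k-1} + x^{k-2}y + \cdots + y^{k-1})$. Set $x - y = h$; the inner sum is a value of a power series in the two variables, but it is cleaner to think of it as follows. First I would establish the auxiliary fact that if $M_r f_1 \doteq |a_1|_p$ — meaning $|a_1|_p r^1$ strictly exceeds $|a_k|_p r^k$ for all $k \geqslant 2$ (noting the indexing: $f_1(t) = \sum_{k\geqslant 1} a_k t^{k-1}$, so its coefficients are $a_k$ in degree $k-1$, and the growth modulus condition reads $|a_1|_p > |a_k|_p r^{k-1}$ for $k \geqslant 2$) — then for any $|u|_p \leqslant r$ one has $|f_1(u)|_p = |a_1|_p$ by the ultrametric inequality, since the $k=1$ term $a_1$ strictly dominates all others in absolute value.

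Second, I would upgrade this to the two-variable divided difference. Define $\Phi(x,y) = \sum_{k\geqslant 1} a_k \sum_{i+j = k-1} x^i y^j$, so that $f(x) - f(y) = (x-y)\Phi(x,y)$. The degree-$0$ term of $\Phi$ is $a_1$, and the term of total degree $k-1$ is $a_k \sum_{i+j=k-1} x^i y^j$, which for $|x|_p,|y|_p \leqslant r$ has absolute value at most $|a_k|_p r^{k-1} < |a_1|_p$ for $k \geqslant 2$ by the hypothesis $M_r f_1 \doteq |a_1|_p$. Hence by the ultrametric inequality $|\Phi(x,y)|_p = |a_1|_p$ for all such $x,y$, and therefore $|f(x) - f(y)|_p = |a_1|_p |x-y|_p$. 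The congruence reformulation is then immediate: $f(x) \equiv f(y) \pmod{p^j |a_1|_p^{-1}}$ means $|f(x)-f(y)|_p \leqslant p^{-j}|a_1|_p$, which by the identity just proved is equivalent to $|a_1|_p|x-y|_p \leqslant p^{-j}|a_1|_p$, i.e. $|x-y|_p \leqslant p^{-j}$, i.e. $x \equiv y \pmod{p^j}$.

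The main (minor) obstacle is bookkeeping with the indexing convention $f_1(t) = \sum_{k\geqslant 0} a_{k+1} t^k$ versus the way I have written the divided difference, and making sure the strictness in "$\doteq$" is used correctly: one needs the $k=1$ term of $\Phi$ to \emph{strictly} dominate, which is exactly the content of $M_r f_1 \doteq |a_1|_p$ (unique maximizer at $k_0$ corresponding to the $a_1$ coefficient). One should also note at the outset that the hypothesis forces $r < r_f$ to be a point where the series actually converges, so all the manipulations of infinite sums are legitimate; this is guaranteed since $M_r f_1$ is only defined for $r < r_{f_1} = r_f$.
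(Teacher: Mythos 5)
Your proof is correct and follows essentially the same route as the paper's: both factor $x^k - y^k = (x-y)(x^{k-1}+\cdots+y^{k-1})$, bound the $k\geqslant 2$ contributions by $|a_k|_p r^{k-1}|x-y|_p < |a_1|_p|x-y|_p$ using the hypothesis $M_rf_1\doteq|a_1|_p$, and conclude by the ultrametric inequality. The only difference is presentational: you collect everything into the divided-difference series $\Phi(x,y)$ before bounding, while the paper bounds each term $a_k(x^k-y^k)$ in place — the underlying estimates are identical.
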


\begin{proof}
The proof is simple. We have that, for every $k\geqslant 2$,
\begin{align*}
|a_k(x^k-y^k)|_p&=|a_k(x-y)(x^{k-1}+x^{k-2}y+\dots+y^{k-1})|_p\\
&\leqslant |a_k|_pr^{k-1}\cdot |x-y|_p<|a_1|_p|x-y|_p.
\end{align*}
Therefore,
\[ |f(x)-f(y)|_p=\left|\sum_{k=1}^{\infty}a_k(x^k-y^k)\right|_p=|a_1|_p|x-y|_p. \qedhere \]
\end{proof}

For two power series $f(t)$ and $g(t)$ such that $g(0)=0$, one can define purely formally the power series $(f\circ g)(t)=f(g(t))$ obtained by formal substitution. On the other hand, for any power series $a(t)=\sum_{k=0}^{\infty}a_kt^k$, we can define its derivative series $Da(t)=a'(t)=\sum_{k=1}^{\infty}ka_kt^{k-1}$. The usual rules for differentiation hold, including the Sum and Product rules, as well as the Chain Rule,
\begin{equation}
\label{ChainRuleEquation}
D(f\circ g)(t)=Df(g(t))Dg(t),
\end{equation}
valid for any two power series $f$ and $g$ with $g(0)=0$ \cite[page 289]{RobertpAdicAnalysis}.

We will repeatedly use the following standard proposition, which gives a sufficient condition for this substitution to correspond to numerical substitution in convergent $p$-adic power series:

\begin{lemma}
\label{NumericalSubstitution}
Let $f$ and $g$ be two convergent power series with $g(0)=0$. If $|x|<r_g$ and $M_{|x|}(g)<r_f$, then $r_{f\circ g}>|x|$ and the numerical evaluation of the composite $f\circ g$ can be made according to
\[ (f\circ g)(x)=f(g(x)). \]
\end{lemma}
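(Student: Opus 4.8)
The plan is to compare the formal composite $(f\circ g)(t)=\sum_j c_j t^j$ with the numerical value $f(g(x))$ by the standard device of truncation and a double-series rearrangement, which is legitimate in the $p$-adic (non-archimedean) setting precisely because a double series converges iff its terms tend to $0$. First I would write $f(t)=\sum_{k\ge 0} a_k t^k$ and $g(t)=\sum_{i\ge 1} b_i t^i$ (note $b_0=0$), so that formally
\[
(f\circ g)(t)=\sum_{k\ge 0} a_k\Big(\sum_{i\ge 1} b_i t^i\Big)^{\!k}=\sum_{k\ge 0}\sum_{i_1,\dots,i_k\ge 1} a_k b_{i_1}\cdots b_{i_k}\, t^{i_1+\cdots+i_k},
\]
and the coefficient $c_j$ of $t^j$ is the (finite) sum of $a_k b_{i_1}\cdots b_{i_k}$ over all $k$ and all compositions $i_1+\cdots+i_k=j$.

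The key step is to show that, under the hypotheses $|x|<r_g$ and $\rho:=M_{|x|}(g)<r_f$, every term $a_k b_{i_1}\cdots b_{i_k}\, x^{i_1+\cdots+i_k}$ of the above double-indexed family has $p$-adic absolute value tending to $0$ as $(k;i_1,\dots,i_k)\to\infty$. Indeed $|b_i|_p|x|^i\le \rho$ for all $i\ge 1$ by definition of the growth modulus, so $|a_k b_{i_1}\cdots b_{i_k} x^{i_1+\cdots+i_k}|_p\le |a_k|_p\rho^k$; since $\rho<r_f$, the right-hand side tends to $0$ as $k\to\infty$ by the definition of $r_f$ in \eqref{RadiusOfConvergenceEquation}, and for each fixed $k$ there are only finitely many compositions of a given $j$, with the terms bounded by $|a_k|_p\rho^k$ uniformly. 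Hence the family is summable, so we may freely rearrange: summing first over the compositions of each fixed $j$ gives $\sum_j c_j x^j=(f\circ g)(x)$ as a convergent series, which incidentally shows $\sum_j |c_j|_p|x|^j$ is bounded and in fact $|c_j|_p|x|^j\le \max_{k\le j}|a_k|_p\rho^k\to 0$, yielding $r_{f\circ g}>|x|$; summing instead first over $i_1,\dots,i_k$ for each fixed $k$ gives $\sum_k a_k g(x)^k=f(g(x))$, which converges because $|g(x)|_p\le \rho<r_f$. The two groupings of the same summable family are equal, which is exactly the claimed identity $(f\circ g)(x)=f(g(x))$.

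The main obstacle is purely bookkeeping: making the rearrangement of the double (really, countably-indexed) family rigorous, i.e.\ invoking the correct non-archimedean Fubini/rearrangement principle and checking the uniform bound $|a_k b_{i_1}\cdots b_{i_k} x^{i_1+\cdots+i_k}|_p\le |a_k|_p\rho^k$ is genuinely a bound for \emph{all} compositions simultaneously, so that summability of the whole family reduces to $|a_k|_p\rho^k\to 0$. Everything else — the two radius-of-convergence estimates and the identification of the two groupings with $(f\circ g)(x)$ and $f(g(x))$ — is then immediate. (One could alternatively cite \cite[Ch.~6]{RobertpAdicAnalysis} for this substitution principle, but the argument above is short enough to include.)
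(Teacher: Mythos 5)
The paper does not give an argument here; its ``proof'' is a one-line citation to \cite[p.~294]{RobertpAdicAnalysis}. Your proposal supplies the actual argument, and it is essentially the standard one found there (non-archimedean summability of the double family, followed by two groupings). The core identity $(f\circ g)(x)=f(g(x))$ is argued correctly.

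Two points do not quite hold up, both in the part of the conclusion about the radius of convergence. First, the inequality $|c_j|_p|x|^j\le\max_{k\le j}|a_k|_p\rho^k$ is correct, but the right side does \emph{not} tend to $0$: the sequence $\max_{k\le j}|a_k|_p\rho^k$ is non-decreasing in $j$ and stabilizes at the constant $\max_k|a_k|_p\rho^k$. So this bound gives only boundedness of $|c_j|_p|x|^j$, not decay. To get $|c_j|_p|x|^j\to 0$ one must also use that $|b_i|_p|x|^i\to 0$ (from $|x|<r_g$), splitting off large $k$ via $|a_k|_p\rho^k\to 0$ and, for each remaining small $k$, noting that a composition $i_1+\cdots+i_k=j$ with $j$ large forces some $i_m\ge j/k$, whence one factor $|b_{i_m}|_p|x|^{i_m}$ is small. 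Second, even after this repair you only obtain $r_{f\circ g}\ge |x|$, whereas the lemma asserts the strict inequality $r_{f\circ g}> |x|$. For that you should observe that $r\mapsto M_r(g)$ is continuous and non-decreasing on $[0,r_g)$, so $M_{|x|}(g)<r_f$ persists for some $r\in(|x|,r_g)$, and then run the same summability/rearrangement argument with $r$ in place of $|x|$ to conclude $r_{f\circ g}\ge r>|x|$. The same applies to your summability claim: it is not enough that for fixed $k$ ``there are finitely many compositions of a given $j$''; you need the terms to tend to $0$ as the composition varies over all $j$, which again is exactly where $|b_i|_p|x|^i\to 0$ enters. These are small but genuine gaps, and the fixes are the ones sketched above.
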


\begin{proof}
This statement is from \cite[page 294]{RobertpAdicAnalysis}.
\end{proof}

In particular, consider the power series
\[ \varepsilon(x)=\exp_p(x)=\sum_{k=0}^{\infty}\frac1{k!}x^k=1+\varepsilon_0(x),\quad \lambda(x)=\log_p(1+x)=\sum_{k=1}^{\infty}\frac{(-1)^{k-1}}{k}x^k. \]
Recall that
\[ \text{ord}_p(k!)=\left\lfloor\frac kp\right\rfloor+\left\lfloor\frac k{p^2}\right\rfloor+\cdots<\frac{k}{p-1},\quad\text{so that}\quad\text{ord}_p(k!)\leqslant\frac{k-1}{p-1}. \]
It is therefore seen that
\begin{alignat*}{3}
&r_{\varepsilon}=r_{\varepsilon_0}=r_p, &\quad &M_r\varepsilon_0\doteq r &\,\,\, &\text{for all}\,\,r<r_p,\\
&r_{\lambda}=1, &\quad &M_r\lambda\doteq r &\,\,\, &\text{for all}\,\,r<r_p,
\end{alignat*}
where $r_p=p^{-\rho_p}$, $\rho_p=1/(p-1)$. Moreover, if  $\mathcal{O}$ is any complete valuation ring extension of $\mathbb{Z}_p$ (possibly $\mathcal{O}=\mathbb{Z}_p$) and $K$ is the field of fractions of $\mathcal{O}$, and if $B_r=\{t\in\mathcal{O}:|t|_p<r\}$, then $\varepsilon_0,\lambda:B_{r_p}\to B_{r_p}$ are isometries such that, according to Lemma \ref{NumericalSubstitution}, $\varepsilon_0\circ\lambda=\lambda\circ\varepsilon_0=\text{id}_{B_{r_p}}$.

Of particular interest to us will be the power series $\pi^y(x)$, defined for $y\in K^{\times}$ as
\[ \pi^y(x)=1+\pi^y_0(x)=\varepsilon(y\lambda(x))=\sum_{k=0}^{\infty}\binom ykx^k. \]
It is easy to see that the radius of convergence $r_{\pi^y}=r_{\pi^y_0}$ equals $\infty$ if $y\in\mathbb{N}_0$, $1$ if $\iota(y)=0$ and $y\not\in\mathbb{N}_0$, and $r_pp^{-\iota(y)}$ if $\iota(y)>0$. In any case, for $r<r_pp^{-\iota(y)}$, the above composition is also valid as a numerical evaluation by Lemma \ref{NumericalSubstitution}, and $M_r\pi^y_0\doteq |y|_pr$. Moreover, $\pi^y_0:B_{r_pp^{-\iota(y)}}\to B_{r_pp^{-\iota'(y)}}$ is an isometry such that $\pi^{1/y}_0\circ\pi^y_0=\text{id}_{B_{r_pp^{-\iota(y)}}}$. We write $\pi^y(x)=(1+x)^y$. We have that
\begin{equation}
\label{PowerFunctionMultiplicativity}
\big((1+x_1)(1+x_2)\big)^y=(1+x_1)^y(1+x_2)^y
\end{equation}
for all $x_1,x_2\in B_{r_pp^{-\iota(y)}}$.

In particular, the equation $(1+x)^y=1+t$ has a solution $x\in B_{r_pp^{-\iota(y)}}$ if and only if $t\in B_{r_pp^{-\iota'(y)}}$, in which case the solution is unique and given by $1+x=(1+t)^{1/y}$.

Among all power series $a(t)=\sum_{k=0}^{\infty}a_kt^k$ with coefficients $a_k\in\mathbb{Z}_p$, we consider the following subsets:
\begin{align*}
\label{ClassesOfPowerSeries}
\mathbf{I}_0(\mathbb{Z}_p)&=\left\{a(t):a_k\in\mathbb{Z}_p\,(k\geqslant 0),\,\,\lim |a_k|_p=0\right\},\\
\mathbf{I}(\mathbb{Z}_p)&=\mathbb{Z}_p+pt\mathbf{I}_0(\mathbb{Z}_p)=\left\{a(t):a_0\in\mathbb{Z}_p,\,\,a_k\in p\mathbb{Z}_p\,(k\geqslant 1),\,\,\lim|a_k|_p=0\right\},\\
\mathbf{I}^{\times}(\mathbb{Z}_p)&=\mathbb{Z}^{\times}_p+pt\mathbf{I}_0(\mathbb{Z}_p)=\left\{a(t):a_0\in\mathbb{Z}^{\times}_p,\,\,a_k\in p\mathbb{Z}_p\,(k\geqslant 1),\,\,\lim|a_k|_p=0\right\},\\
\mathbf{I}^1(\mathbb{Z}_p)&=(1+p\mathbb{Z}_p)+pt\mathbf{I}_0(\mathbb{Z}_p)\hphantom{{}^{\kappa\kappa}}=1+p\mathbf{I}_0(\mathbb{Z}_p),\\ \mathbf{I}^1_{\kappa}(\mathbb{Z}_p)&=(1+p^{\kappa}\mathbb{Z}_p)+p^{\kappa}t\mathbf{I}_0(\mathbb{Z}_p)=1+p^{\kappa}\mathbf{I}_0(\mathbb{Z}_p).
\end{align*}
We see that all power series in the ring $\mathbf{I}_0(\mathbb{Z}_p)$ define analytic functions $\mathbb{Z}_p\to\mathbb{Z}_p$, that $\mathbf{I}(\mathbb{Z}_p)$ is a subring of $\mathbf{I}_0(\mathbb{Z}_p)$, and that $\mathbf{I}^{\times}(\mathbb{Z}_p)$ is the group of invertible elements of $\mathbf{I}(\mathbb{Z}_p)$. We note for reference that obviously
\begin{alignat*}{2}
&r_a\geqslant 1 &\quad &\text{for all }a\in\mathbf{I}_0(\mathbb{Z}_p),\\
&M_r(a)\leqslant 1 &\quad &\text{for all }r\leqslant 1,\,\,a\in\mathbf{I}_0(\mathbb{Z}_p),\\
&M_r(a)\doteq 1 &\quad &\text{for all }r\leqslant 1,\,\,a\in\mathbf{I}^{\times}(\mathbb{Z}_p).
\end{alignat*}

Let $y\in\mathbb{Q}^{\times}_p$ and an integer $\kappa\geqslant 1+\iota'(2)$ be arbitrary, and let $\iota=\iota(y)$, $\iota'=\iota'(y)$, so that $\kappa+\iota'=\kappa+\iota+\text{ord}_py$. Then the power series $\pi^y_{[\kappa+\iota]}(x)=1+\pi^y_{[\kappa+\iota]0}(x)=(1+p^{\kappa+\iota}x)^y$ satisfies
\[ M_r\pi^y_{[\kappa+\iota]0}\doteq|y|_pp^{-\kappa-\iota}r=p^{-\kappa-\iota'}r\]
for every $r<p^{\kappa}r_p$ (in particular for $r=1$), and $\pi^y_{[\kappa+\iota]}$ belongs to $\mathbf{I}^1_{\kappa+\iota'}(\mathbb{Z}_p)$.

We will also consider, for any given $\lambda\in\mathbb{R}_{\geqslant 0}$, the following subspaces of $\mathbf{I}_0(\mathbb{Z}_p)$,
\begin{equation}
\label{LambdaDefinitions}
\begin{aligned}
\mathbf{I}_0[\lambda](\mathbb{Z}_p)&=\{a(t)\in\mathbf{I}_0(\mathbb{Z}_p):\text{ord}_pa_k\geqslant\lceil k\lambda\rceil\,\,(k\in\mathbb{N}_0)\},\\
\mathbf{I}^n_0[\lambda](\mathbb{Z}_p)&=t\mathbf{I}_0[\lambda](\mathbb{Z}_p),\\
\mathbf{I}^1_{\kappa}[\lambda](\mathbb{Z}_p)&=(1+p^{\kappa}\mathbb{Z}_p)+p^{\kappa}\mathbf{I}^n_0[\lambda](\mathbb{Z}_p).
\end{aligned}
\end{equation}
For example, if $\lambda\in\mathbb{N}_0$, then $\mathbf{I}_0[\lambda](\mathbb{Z}_p)$ consists of power series of the form $a(t)=a_1(p^{\lambda}t)$ for some $a_1(t)\in\mathbf{I}_0(\mathbb{Z}_p)$. It is clear that each $\mathbf{I}_0[\lambda](\mathbb{Z}_p)$ is a ring, that $\mathbf{I}^n_0[\lambda](\mathbb{Z}_p)$ is an $\mathbf{I}_0[\lambda](\mathbb{Z}_p)$-module, and that, when $\kappa\geqslant\lambda$, $\mathbf{I}^1_{\kappa}[\lambda](\mathbb{Z}_p)$ is a subgroup of $\mathbf{I}_0[\lambda](\mathbb{Z}_p)^{\times}$. It is also clear that $r_a\geqslant p^{\lambda}$ for every $a(t)\in\mathbf{I}_0[\lambda](\mathbb{Z}_p)$.

The relevance of these classes for us stems from the fact that, as is easily verfiied,
\[ \pi^y_{[\kappa+\iota]}\in
\begin{cases}\mathbf{I}^1_{\kappa+\iota'}[\kappa-\rho_p](\mathbb{Z}_p),&\text{ord}_py\neq 0,\\
\mathbf{I}^1_{\kappa+\iota'}[\kappa](\mathbb{Z}_p),&\text{ord}_py=0.\end{cases} \]
We can write $\pi^y_{[\kappa+\iota]}\in\mathbf{I}^1_{\kappa+\iota'}[\kappa-\rho_p(y)](\mathbb{Z}_p)$, where $\rho_p(y)$ equals $\rho_p$ if $\text{ord}_py\neq 0$ and $0$ otherwise.

For every $a(t)\in\mathbf{I}^1_{\kappa+\iota}(\mathbb{Z}_p)$, $a(t)=a_0+p^{\kappa+\iota}ta_1(t)$, $a_0\in(1+p^{\kappa+\iota}\mathbb{Z}_p)$, $a_1(t)\in \mathbf{I}_0(\mathbb{Z}_p)$, we can consider $a(t)^y=a_0^y\big(1+p^{\kappa+\iota}a_0^{-1}ta_1(t)\big)^y$, with the latter power defined by formal substitution. This power series $a(t)^y$ belongs to $\mathbf{I}^1_{\kappa+\iota'}(\mathbb{Z}_p)$. According to Lemma \ref{NumericalSubstitution}, the values $a(t)^y$ can be numerically evaluated as compositions for $t\in\mathbb{Z}_p$. In particular, for every two $a(t),b(t)\in\mathbf{I}^1_{\kappa+\iota}(\mathbb{Z}_p)$, we have according to \eqref{PowerFunctionMultiplicativity} the equality of values
\[ (a(t)b(t))^y=a(t)^yb(t)^y \]
for every $t\in\mathbb{Z}_p$. Consequently, both sides of this equation must also agree as power series in $\mathbf{I}^1_{\kappa+\iota'}(\mathbb{Z}_p)$. Similarly, let $a(t)\in\mathbf{I}^1_{\kappa+\iota}(\mathbb{Z}_p)$, and let $b(t)=a(t)^y\in\mathbf{I}^1_{\kappa+\iota'}(\mathbb{Z}_p)$. For every $t\in\mathbb{Z}_p$, we have an equality of values $a(t)^y=b(t)$ in $1+p^{\kappa+\iota'}\mathbb{Z}_p$. Therefore, we must also have
\[ a(t)=b(t)^{1/y} \]
as an equality of values $a(t)=b(t)^{1/y}$ in $1+p^{\kappa+\iota}\mathbb{Z}_p$ for every $t\in\mathbb{Z}_p$, and therefore also as an equality of series in $\mathbf{I}^1_{\kappa+\iota}(\mathbb{Z}_p)$.

Finally, we comment on the compositions of series of the form \eqref{LambdaDefinitions}. Suppose that $a(t)=\sum_{k=0}^{\infty}a_kt^k\in\mathbf{I}_0[\lambda_a](\mathbb{Z}_p)$ and $b(t)=t\sum_{k=0}^{\infty}b_kt^k\in\mathbf{I}^n_0[\lambda_b](\mathbb{Z}_p)$, where  $\lambda_a>0$. For every $r<p^{\lambda_b}$, $M_r(b)\leqslant r$, so that numerical substitution in $a(b(t))$ is allowed for all $r<\min(p^{\lambda_a},p^{\lambda_b})$ according to Lemma~\ref{NumericalSubstitution}. Moreover, from the formal substitution
\[ a(b(t))=\sum_{k=0}^{\infty}a_kt^k\bigg(\sum_{\ell=0}^{\infty}b_{\ell}t^{\ell}\bigg)^k=\sum_{k=0}^{\infty}\bigg(\sum_{k=k_0+\ell_1+\dots+\ell_{k_0}}a_{k_0}b_{\ell_1}\dots b_{\ell_{k_0}}\bigg)t^k, \]
it is clear that $(a\circ b)\in\mathbf{I}_0[\min(\lambda_a,\lambda_b)](\mathbb{Z}_p)$. If, in addition, $a(t)\in\mathbf{I}^1_{\kappa}[\lambda_a](\mathbb{Z}_p)$ for some $\kappa\geqslant\lambda_a$, then it follows from above that $(a\circ b)(t)\in\mathbf{I}^1_{\kappa}[\min(\lambda_a,\lambda_b)](\mathbb{Z}_p)$. In particular, if $a(t)\in\mathbf{I}^1_{\kappa+\iota}[\lambda](\mathbb{Z}_p)$, then $a(t)^y\in\mathbf{I}^1_{\kappa+\iota'}[\min(\kappa-\rho_p(y),\lambda)](\mathbb{Z}_p)$.

The following two lemmas~\ref{RaiseToPower} and \ref{PowerSeriesTaylorExpansion} will  be useful in obtaining successive convergents to the solution of an implicit function problem in Lemma~\ref{ImplicitFunctionTheorem}.

\begin{lemma}
\label{RaiseToPower}
Let $y\in\mathbb{Q}^{\times}_p$ and $\kappa\in\mathbb{N}$ be arbitrary, and let $\iota=\iota(y)$, $\iota'=\iota'(y)$. Let further $a(t)$ and $b(t)$ be two power series with $a(t)\in\mathbf{I}^1_{\kappa+\iota}[\lambda_a](\mathbb{Z}_p)$ and $b(t)\in\mathbf{I}^n_0[\lambda_b](\mathbb{Z}_p)$, $\lambda_a,\lambda_b\geqslant 0$. Then there exists a power series $\tilde{b}(t)\in\mathbf{I}^n_0[\min(\kappa-\rho_p(y),\lambda_a,\lambda_b)](\mathbb{Z}_p)$ such that
\[ \big(a(t)+p^{\kappa+\iota}b(t)\big)^y=a(t)^y+p^{\kappa+\iota'}\tilde{b}(t). \]
\end{lemma}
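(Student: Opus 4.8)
The plan is to read the identity off the membership statements for $y$-th powers already established in this section, so that essentially no new computation is needed. First I would use the defining decomposition of $a(t)\in\mathbf{I}^1_{\kappa+\iota}[\lambda_a](\mathbb{Z}_p)$, writing $a(t)=a_0+p^{\kappa+\iota}c(t)$ with $a_0\in 1+p^{\kappa+\iota}\mathbb{Z}_p$ and $c(t)\in\mathbf{I}^n_0[\lambda_a](\mathbb{Z}_p)$. Since $b(t)\in\mathbf{I}^n_0[\lambda_b](\mathbb{Z}_p)$ and $\mathbf{I}^n_0[\lambda'](\mathbb{Z}_p)\subseteq\mathbf{I}^n_0[\lambda](\mathbb{Z}_p)$ whenever $\lambda\leqslant\lambda'$, the module $\mathbf{I}^n_0[\min(\lambda_a,\lambda_b)](\mathbb{Z}_p)$ contains both $c$ and $b$, hence their sum, so that
$a(t)+p^{\kappa+\iota}b(t)=a_0+p^{\kappa+\iota}\bigl(c(t)+b(t)\bigr)\in\mathbf{I}^1_{\kappa+\iota}[\min(\lambda_a,\lambda_b)](\mathbb{Z}_p)$.

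Next I would apply the fact recorded above — that $f\in\mathbf{I}^1_{\kappa+\iota}[\lambda](\mathbb{Z}_p)$ implies $f^y\in\mathbf{I}^1_{\kappa+\iota'}[\min(\kappa-\rho_p(y),\lambda)](\mathbb{Z}_p)$ — twice: to $f=a$ (with $\lambda=\lambda_a$) and to $f=a+p^{\kappa+\iota}b$ (with $\lambda=\min(\lambda_a,\lambda_b)$). Setting $\mu=\min(\kappa-\rho_p(y),\lambda_a,\lambda_b)$ and using once more that $\mathbf{I}^1_{\kappa+\iota'}[\,\cdot\,](\mathbb{Z}_p)$ only grows as its parameter decreases, both $a(t)^y$ and $\bigl(a(t)+p^{\kappa+\iota}b(t)\bigr)^y$ lie in $\mathbf{I}^1_{\kappa+\iota'}[\mu](\mathbb{Z}_p)$. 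Moreover, these two power series have the same constant term: since $b(0)=0$ we have $\bigl(a+p^{\kappa+\iota}b\bigr)(0)=a(0)=a_0$, so each $y$-th power has constant term $a_0^y$. Therefore I can write $a(t)^y=a_0^y+p^{\kappa+\iota'}u(t)$ and $\bigl(a(t)+p^{\kappa+\iota}b(t)\bigr)^y=a_0^y+p^{\kappa+\iota'}v(t)$ with $u,v\in\mathbf{I}^n_0[\mu](\mathbb{Z}_p)$.

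Subtracting these two expressions gives $\bigl(a(t)+p^{\kappa+\iota}b(t)\bigr)^y=a(t)^y+p^{\kappa+\iota'}\tilde b(t)$ with $\tilde b=v-u$, which lies in $\mathbf{I}^n_0[\mu](\mathbb{Z}_p)$ because that set is a module and hence closed under subtraction; this is exactly the asserted identity. I do not expect a genuine obstacle: the only two points requiring a moment's attention are (i) that the two $y$-th powers really do share the constant term $a_0^y$, so their difference has no constant term and is divisible by $p^{\kappa+\iota'}$, and (ii) the bookkeeping of which of $\kappa-\rho_p(y)$, $\lambda_a$, $\lambda_b$ ends up controlling $\tilde b$, and both are settled automatically once the $y$-th-power membership statements are invoked. (If one preferred a more hands-on route, one could instead expand $\bigl(a+p^{\kappa+\iota}b\bigr)^y=a^y\,\pi^y_{[\kappa+\iota]}\bigl(a^{-1}b\bigr)$ and track the coefficients of $\pi^y_0$ directly, but routing the argument through the machinery built above keeps the loss tracking cleanest.)
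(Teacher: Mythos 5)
Your argument is correct, and it is genuinely different from the paper's. The paper factors $\big(a+p^{\kappa+\iota}b\big)^y=a^y\big(1+p^{\kappa+\iota}a^{-1}b\big)^y$ — after first reducing (without loss of generality) to $\lambda_a\leqslant\kappa+\iota$, which is needed so that $a^{-1}$ stays in $\mathbf{I}^1_{\kappa+\iota}[\lambda_a](\mathbb{Z}_p)$ — and then defines $\tilde b$ by the explicit formula $\tilde b=a^y\big((1+p^{\kappa+\iota}a^{-1}b)^y-1\big)/p^{\kappa+\iota'}$. You sidestep both the inversion of $a$ and the WLOG step entirely: you absorb $b$ into the tail of $a$ to land in $\mathbf{I}^1_{\kappa+\iota}[\min(\lambda_a,\lambda_b)](\mathbb{Z}_p)$, apply the $y$-th-power closure property directly to $a$ and to $a+p^{\kappa+\iota}b$, note that both powers share the constant term $a_0^y$ because $b(0)=0$, and subtract. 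The two approaches yield the same class membership for $\tilde b$; the paper's gives an explicit expression for $\tilde b$ while yours is more abstract, but the explicit form is never actually used downstream (Lemma~\ref{ImplicitFunctionTheorem} only needs the membership statement), so nothing is lost. Yours is arguably the cleaner route, and it also works verbatim without the $\lambda_a\leqslant\kappa+\iota$ reduction since it never touches $a^{-1}$.
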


\begin{proof}
We may assume that $\lambda_a\leqslant\kappa+\iota$. Note that
\[ \big(a(t)+p^{\kappa+\iota}b(t)\big)^y=\Big[a(t)\big(1+p^{\kappa+\iota}a(t)^{-1}b(t)\big)\Big]^y=a(t)^y\big(1+p^{\kappa+\iota}a(t)^{-1}b(t)\big)^y. \]
As pointed above, we have that $a(t)^y\in\mathbf{I}^1_{\kappa+\iota'}[\min(\kappa-\rho_p(y),\lambda_a)](\mathbb{Z}_p)$, as well as $a(t)^{-1}\in\mathbf{I}^1_{\kappa+\iota}[\lambda_a](\mathbb{Z}_p)$, $a(t)^{-1}b(t)\in\mathbf{I}^n_0[\min(\lambda_a,\lambda_b)](\mathbb{Z}_p)$, and so
\[ \big(1+p^{\kappa+\iota}a(t)^{-1}b(t)\big)^y\in\mathbf{I}^1_{\kappa+\iota'}[\min(\kappa-\rho_p(y),\lambda_a,\lambda_b)](\mathbb{Z}_p). \]
We can thus take
\[ \tilde{b}(t)=a(t)^y\frac{\big(1+p^{\kappa+\iota}a(t)^{-1}b(t)\big)^y-1}{p^{\kappa+\iota'}}. \qedhere \]
\end{proof}

We continue with a discussion regarding formal substitution in Taylor series. We start with an easy observation \cite[Corollary on p.76]{RobertpAdicAnalysis} that, if $b_{ik}\in\mathbb{Q}_p$ ($i,k\in\mathbb{N}_0$) are such that $\lim_{\max(i,k)\to\infty}|b_{ik}|_p=0$, then $\sum_{i=0}^{\infty}\big(\sum_{k=0}^{\infty}b_{ik}\big)=\sum_{k=0}^{\infty}\big(\sum_{i=0}^{\infty}b_{ik}\big)$.

For a power series $a(t)=\sum_{k=0}^{\infty}a_kt^k$, we can also consider its $i^{\textnormal{th}}$ derivative $D_ia(t)=a^{(i)}(t)=i!\sum_{k=i}^{\infty}\binom kia_kt^{k-i}$. The series for $D_ia$ converges on the disk of convergence $D$ of $a$, and its sum agrees with the (analytic) $i^{\text{th}}$ derivative of $a$ on $D$; in fact, it is immediate from \eqref{RadiusOfConvergenceEquation} that $r_{a^{(i)}}=r_a$. Moreover, for every $x,b\in D$, we have an equality of values
\begin{equation}
\label{TaylorSeriesValues}
f(x)=\sum_{i=0}^{\infty}\frac{f^{(i)}(b)}{i!}(x-b)^i,
\end{equation}
since the order of summation can be exchanged with $b_{ik}=\binom kia_kb^{k-i}(x-b)^i$ ($k\geqslant i$) \cite[Proposition 3.22 on page 87]{Katok}.

On the other hand, suppose that $f_0,f_1,f_2,\dots$ is a sequence of formal power series in $\mathbf{I}_0(\mathbb{Z}_p)$, with $f_i(t)=\sum_{k=0}^{\infty}a_{ik}t^k$. If, for every fixed $k\in\mathbb{N}_0$, $\lim|a_{ik}|_p=0$, then we can define the formal sum $f(t)=\sum_{k=0}^{\infty}\big(\sum_{i=0}^{\infty}a_{ik}\big)t^k$. If $x\in\mathbb{Q}_p$ is such that
\begin{equation}
\label{DoubleExchangeCriterion}
\lim_{\max(i,k)\to\infty}|a_{ik}|_p|x|^k_p=0,
\end{equation}
then all $f_i(x)$ and $f(x)$ converge, and in fact we have an equality of values $f(x)=\sum_{i=0}^{\infty}f_i(x)$.

Finally, we will also consider, for an $i\in\mathbb{N}_0$, the class
\[\mathbf{I}_{0,i}[\lambda](\mathbb{Z}_p)=\{a(t)\in\mathbf{I}_0(\mathbb{Z}_p):\text{ord}_pa_k\geqslant\lceil(k+i)\lambda\rceil\,\,(k\in\mathbb{N}_0)\}, \]
and, analogously, $\mathbf{I}^n_{0,i}[\lambda](\mathbb{Z}_p)=t\mathbf{I}_{0,i}[\lambda](\mathbb{Z}_p)$. It is easy to see that, if $a(t)\in\mathbf{I}_0[\lambda](\mathbb{Z}_p)$, then $a^{(i)}(t)/i!\in\mathbf{I}_{0,i}[\lambda](\mathbb{Z}_p)$. It is also easy to see that, if $a(t)\in\mathbf{I}_{0,i_a}[\lambda_a](\mathbb{Z}_p)$ and $b(t)\in\mathbf{I}^n_{0,i_b}[\lambda_b](\mathbb{Z}_p)$, then $(a\circ b)\in a(0)+\mathbf{I}_{0,i_a+i_b}[\min(\lambda_a,\lambda_b)](\mathbb{Z}_p)$ (and the term $a(0)$ may be omitted if $i_b=0$). Also, if $a(t)\in\mathbf{I}_{0,i}[\lambda_a](\mathbb{Z}_p)$ and $b(t)\in t^j\mathbf{I}_0[\lambda_b](\mathbb{Z}_p)$, then $a(t)b(t)\in t^{\max(j-i,0)}\mathbf{I}_{0,\max(i-j,0)}[\min(\lambda_a,\lambda_b)](\mathbb{Z}_p)$.

We use these observations in the proof of the final lemma of this section.

\begin{lemma}
\label{PowerSeriesTaylorExpansion}
Let $f\in\mathbf{I}_0(\mathbb{Z}_p)$, $g,h\in\mathbf{I}^n_0(\mathbb{Z}_p)$, $u\in\mathbb{N}_0$. Then
\[ f\big(g(t)+p^uh(t)\big)=\sum_{i=0}^{\infty}\frac{f^{(i)}(g(t))}{i!}p^{ui}h(t)^i. \]
In particular, if $f\in\mathbf{I}_0[\lambda_f](\mathbb{Z}_p)$, $g\in\mathbf{I}^n_0[\lambda_g](\mathbb{Z}_p)$, $h\in\mathbf{I}^n_0[\lambda_h](\mathbb{Z}_p)$, then
\[ f\big(g(t)+p^uh(t)\big)=f(g(t))+p^uf_1(t)  \]
for some $f_1\in\mathbf{I}^n_{0,1}[\min(\lambda_f,\lambda_g,\lambda_h)](\mathbb{Z}_p)$.
\end{lemma}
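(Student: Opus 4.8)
The plan is to first establish the displayed identity as an identity of formal power series (and hence of values), and then read off the claimed membership of the tail from the class calculus recorded just above the lemma.

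For the identity itself I would expand $f(X)=\sum_{k\geq 0}a_kX^k$ at $X=g(t)+p^uh(t)$ and apply the binomial theorem, writing $f(g(t)+p^uh(t))=\sum_k a_k\sum_{i=0}^k\binom ki g(t)^{k-i}p^{ui}h(t)^i$. Using $f^{(i)}(t)/i!=\sum_{k\geq i}\binom ki a_kt^{k-i}$, the right-hand side of the lemma equals $\sum_i p^{ui}h(t)^i\sum_{k\geq i}\binom ki a_k g(t)^{k-i}$, which differs from the expansion above only by the order of summation over the set $\{(k,i):0\leq i\leq k\}$. Since $g,h\in\mathbf{I}^n_0(\mathbb{Z}_p)$ have vanishing constant term, $g(t)^{k-i}h(t)^i$ is divisible by $t^k$, so the coefficient of any fixed power $t^m$ receives contributions only from the finitely many pairs with $k\leq m$; the rearrangement is thus legitimate coefficient by coefficient, and the two formal power series coincide. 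The corresponding equality of values at any $t_0\in\mathbb{Z}_p$ then follows either by numerical substitution (Lemma~\ref{NumericalSubstitution}, using $r_f\geq 1$ and $g+p^uh\in\mathbf{I}^n_0(\mathbb{Z}_p)$) or directly from the Taylor expansion \eqref{TaylorSeriesValues} with $x=g(t_0)+p^uh(t_0)$ and $b=g(t_0)$.

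For the second statement, set $\lambda=\min(\lambda_f,\lambda_g,\lambda_h)$, so that the first part gives $p^uf_1(t)=f(g(t)+p^uh(t))-f(g(t))=\sum_{i\geq 1}\tfrac{f^{(i)}(g(t))}{i!}p^{ui}h(t)^i$, i.e. $f_1(t)=\sum_{i\geq 1}\tfrac{f^{(i)}(g(t))}{i!}p^{u(i-1)}h(t)^i$; I would show each summand lies in $\mathbf{I}^n_{0,1}[\lambda](\mathbb{Z}_p)$. The chain of inclusions just before the lemma does the work: $f^{(i)}/i!\in\mathbf{I}_{0,i}[\lambda_f](\mathbb{Z}_p)$; composing with $g\in\mathbf{I}^n_{0,0}[\lambda_g](\mathbb{Z}_p)$ yields $f^{(i)}(g(t))/i!\in\mathbf{I}_{0,i}[\min(\lambda_f,\lambda_g)](\mathbb{Z}_p)$ (the $i_b=0$ case, with no constant term to split off); and since $\mathbf{I}_0[\lambda_h](\mathbb{Z}_p)$ is a ring one may write $h(t)^i=t^i\tilde h_i(t)$ with $\tilde h_i\in\mathbf{I}_0[\lambda_h](\mathbb{Z}_p)$, so that $\tfrac{f^{(i)}(g(t))}{i!}h(t)^i=t\cdot\bigl(t^{i-1}\tfrac{f^{(i)}(g(t))}{i!}\bigr)\tilde h_i(t)$. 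The bookkeeping point is the inclusion $t^{i-1}\mathbf{I}_{0,i}[\mu](\mathbb{Z}_p)\subseteq\mathbf{I}_{0,1}[\mu](\mathbb{Z}_p)$ — immediate from $\lceil(k+i)\mu\rceil$ being the order bound — which places the bracket in $\mathbf{I}_{0,1}[\min(\lambda_f,\lambda_g)](\mathbb{Z}_p)$; multiplying by $\tilde h_i\in\mathbf{I}_0[\lambda_h](\mathbb{Z}_p)$ keeps it in $\mathbf{I}_{0,1}[\lambda](\mathbb{Z}_p)$ by the product rule, and the surviving factor $t$ puts the whole term in $t\mathbf{I}_{0,1}[\lambda](\mathbb{Z}_p)=\mathbf{I}^n_{0,1}[\lambda](\mathbb{Z}_p)$, while the integer power $p^{u(i-1)}$ only helps.

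It remains to pass to the sum over $i$. For each power $t^m$ only the summands with $i\leq m$ contribute (each is divisible by $t^i$), so the coefficient bounds $\text{ord}_p(\cdot)\geq\lceil m\lambda\rceil$ survive this finite summation; one then concludes $f_1\in\mathbf{I}^n_{0,1}[\lambda](\mathbb{Z}_p)$ once it is checked that $f_1\in\mathbf{I}_0(\mathbb{Z}_p)$, i.e. that its coefficients lie in $\mathbb{Z}_p$ and tend to $0$. This last point (only at stake when $\lambda=0$) follows from the first part, as $p^uf_1=f(g+p^uh)-f(g)$ is a difference of two members of $\mathbf{I}_0(\mathbb{Z}_p)$ with $p^u$ dividing every coefficient. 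The one genuine obstacle I anticipate is precisely the handling of the $\mathbf{I}_{0,i}$-indices: one must trade a factor $t^{i-1}$ out of $h(t)^i$ against the raised vanishing order of $f^{(i)}(g(t))/i!$ in order to land in the sharp class $\mathbf{I}^n_{0,1}[\lambda]$ rather than merely $\mathbf{I}^n_0[\lambda]$; past that, everything is routine tracking through the inclusions and rules already established.
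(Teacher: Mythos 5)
Your proof is correct. For the first displayed identity you take a genuinely different route from the paper's. You argue purely formally: expand $f$ at $g(t)+p^uh(t)$ via the binomial theorem, identify the right-hand side using $f^{(i)}(t)/i!=\sum_{k\geqslant i}\binom ki a_kt^{k-i}$, and justify the reorganization of the double sum over $\{(k,i):0\leqslant i\leqslant k\}$ coefficient by coefficient, which is legitimate because $g$ and $h$ have no constant term (so $g(t)^{k-i}h(t)^i$ is divisible by $t^k$, leaving only finitely many contributors to each power of $t$). The paper instead proves the identity indirectly: both sides are shown to exist as formal power series in $\mathbf{I}_0(\mathbb{Z}_p)$, numerical substitution is allowed for $t\in p\mathbb{Z}_p$, the values there agree by the Taylor expansion \eqref{TaylorSeriesValues}, and hence the two series coincide by the identity principle for $p$-adic power series. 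Your computational argument is more elementary and self-contained; the paper's is terser but leans on the equality-of-values-implies-equality-of-series principle built up in section~\ref{Preliminaries}. (One small imprecision in your side remark: the paper restricts the values argument to $t\in p\mathbb{Z}_p$ because Lemma~\ref{NumericalSubstitution} needs the strict inequality $M_{|x|}(g)<r_f$, which can fail at $|x|_p=1$ when $r_f=1$; this is immaterial for you since your formal argument bypasses the values route entirely.)

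For the second part your bookkeeping is essentially the paper's, just made explicit. Both arguments exploit the same trade: a factor $t^{i-1}$ is extracted from $h(t)^i$ and set against the raised vanishing index of $f^{(i)}(g(t))/i!\in\mathbf{I}_{0,i}[\min(\lambda_f,\lambda_g)](\mathbb{Z}_p)$, so that each summand lands in $\mathbf{I}^n_{0,1}[\min(\lambda_f,\lambda_g,\lambda_h)](\mathbb{Z}_p)$ rather than merely $\mathbf{I}^n_0$. The paper encapsulates this in the single remark $p^{u(i-1)}h(t)^i/t\in t^{i-1}\mathbf{I}_0[\lambda_h](\mathbb{Z}_p)$ together with the product rule stated just before the lemma; you carry out the factorization $h(t)^i=t^i\tilde h_i(t)$ and redistribute $t^{i-1}$ by hand, which is the same calculation. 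Your closing checks --- that only finitely many $i$ contribute to each power of $t$, and that $f_1\in\mathbf{I}_0(\mathbb{Z}_p)$ even when $\lambda=0$ --- are points the paper leaves implicit, and are a welcome tightening.
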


\begin{proof}

We have seen that, if $a(t)\in\mathbf{I}_0(\mathbb{Z}_p)$ and $b(t)\in\mathbf{I}^n_0(\mathbb{Z}_p)$, then $(a\circ b)\in\mathbf{I}_0(\mathbb{Z}_p)$. From the discussion above, both sides of the first equality exist as formal power series in $\mathbf{I}_0(\mathbb{Z}_p)$, numerical substitution is allowed in all terms for $t\in p\mathbb{Z}_p$, and the values of both sides agree for all $t\in p\mathbb{Z}_p$ (in fact for all $t$ for which $g(t),h(t)\in\mathbb{Z}_p$ and numerical substitution is allowed); therefore, they must agree as power series.

Suppose additionally that $f\in\mathbf{I}_0[\lambda_f](\mathbb{Z}_p)$, $g\in\mathbf{I}_0^n[\lambda_g](\mathbb{Z}_p)$, and $h\in\mathbf{I}_0^n[\lambda_h](\mathbb{Z}_p)$. Since $f^{(i)}(t)/i!\in\mathbf{I}_{0,i}[\lambda_f](\mathbb{Z}_p)$, we have that $f^{(i)}(g(t))/i!\in\mathbf{I}_{0,i}[\min(\lambda_f,\lambda_g)](\mathbb{Z}_p)$, and so
\[ f_1(t)=\sum_{i=1}^{\infty}\frac{f^{(i)}(g(t))}{i!}p^{u(i-1)}h(t)^i\in\mathbf{I}^n_{0,1}[\min(\lambda_f,\lambda_g,\lambda_h)](\mathbb{Z}_p),\]
since $p^{u(i-1)}h(t)^i/t\in t^{i-1}\mathbf{I}_0[\lambda_h](\mathbb{Z}_p)$.
\end{proof}

\section{\texorpdfstring{$p$-adic}{p-adic} exponent data and pairs}
\label{DataPairsSection}

Exponential sums of the shape \eqref{GeneralSum} cannot, of course, be non-trivially estimated entirely independently of the arithmetic structure of $f$. In this section, we define a class of functions to which our method suitably applies as well as the principal parameters of our estimates, $p$-adic exponent data and $p$-adic exponent pairs, derive some of their general properties, and give examples illustrating our definitions and their typical uses. Occasionally, and for illustrative purposes only, we reference in this section statements and equations from later sections, but, of course, all actual definitions and propositions are independent of the later material. Additional useful intuition, examples, and explanations can be found in section~\ref{LFunctionSection}.

\medskip

We may, in light of \eqref{TaylorSeriesValues}, think of $f(t)$ as a power series in $t$. Of particular interest to us will be the case when $f(t)$ is a constant multiple of the $p$-adic logarithm $\log_p(1+pt)$ and $B$ is relatively short compared to $p^n$. The method we develop, however, applies to estimation of sums of type \eqref{GeneralSum} with a rather general $f$, as we discuss below. This is a very pleasing aspect of our method, although it is not entirely a matter of choice, for our recursive process produces many other $f$, in addition to the $p$-adic logarithm, which we need to be able to handle. Definition \ref{ClassesDefinition} gives a universe of power series in which we find it convenient to formulate our results.

\begin{definition}
\label{ClassesDefinition}
Let $w\in\mathbb{Z}$, $u,\kappa\in\mathbb{N}$ with $\kappa\geqslant 1+\iota'(2)$, $\lambda\in \rho_p\mathbb{N}$, $y\in\mathbb{Q}^{+}$, and let $\iota=\iota(y)$, $\iota'=\iota'(y)$, $\omega,\omega'\in\mathbb{Z}^{\times}_p$. We say that a power series $f\in\mathbb{Q}^{\times}_p\mathbf{I}_0(\mathbb{Z}_p)$ belongs to class $\mathbf{F}(w,y,\kappa,\lambda,u,\omega,\omega')$ if
\begin{equation}
\label{ClassFDefinition}
f'(t)=p^w\omega'\big(1+p^{\iota+\kappa}\omega t\big)^{-y}+p^w\gamma_0+p^{u+w}g(t)
\end{equation}
for some $\gamma_0\in\mathbb{Z}_p$ and $g\in\mathbf{I}_0[\lambda](\mathbb{Z}_p)$. We say that $f$ belongs to class $\mathbf{F}(w,y,\kappa,\lambda,u)$ if $f\in\mathbf{F}(w,y,\kappa,\lambda,u,\omega,\omega')$ for some $\omega,\omega'\in\mathbb{Z}^{\times}_p$.
\end{definition}

The condition that $\lambda\in\rho_p\mathbb{N}$ (rather than simply $\lambda\in\mathbb{R}^{+}$) is used only to obtain sharper estimates in the proof of Lemma~\ref{StatPhaseFourier} and could easily be dispensed with, but the values of $\lambda$ naturally obtained from our iterative method lie in this discrete set anyway. We will sometimes use the symbol $\infty$ in place of $\lambda$ and $u$ and say that $f$ belongs to the class $\mathbf{F}(w,y,\kappa,\infty,\infty,\omega,\omega')$ if $f$ satisfies Definition~\ref{ClassesDefinition} for arbitrarily large values of $\lambda$ and $u$, which is to say that \eqref{ClassFDefinition} holds with $g=0$.

The class $\mathbf{F}(w,y,\kappa,\lambda,u)$ is wholly unnecessarily restrictive. In fact, for each particular application of our method to an exponential sum involving a power series, say, in $\mathbf{I}_0(\mathbb{Z}_p)$, we really only need a finite list of non-vanishing conditions of the kind that are discussed, for example, in \cite{Hux05}. Such non-vanishing conditions are tedious but straightforward to write down in each particular instance. However, writing these conditions out in full generality appears very involved, and the class of functions we consider amply suffices for the subconvexity application. Our Definitions~\ref{ClassesDefinition} and Definition~\ref{ExponentPairsDefinition} should be compared with their archimedean counterparts in \cite[p. 30-31]{GK}.

Note that, for every series $f\in\mathbf{F}(w,y,\kappa,\lambda,u)$ and every $n\geqslant w$, 
\[ \mathbf{e}_f(m)=e\left(\frac{f(m)}{p^n}\right) \]
defines a function $\mathbf{e}_f:\mathbb{Z}\to\mathbb{C}$ which is periodic with period $p^{n-w}$. Indeed, by \eqref{TaylorSeriesValues}, we have, for every $q\in\mathbb{Z}$, an equality of values
\[ f\big(m+p^{n-w}q\big)=f(m)+f'(m)p^{n-w}q+\sum_{r=2}^{\infty}\frac{f^{(r)}(m)}{r!}p^{(n-w)r}q^r. \]
Since $f'(m)\in p^w\mathbb{Z}_p$, while $\ord_pr!\leqslant\lfloor(r-1)\rho_p\rfloor$ and
\[ \ord_pf^{(r)}(m)\geqslant w+\min\big((r-1)\kappa,u+\lceil (r-1)\lambda\rceil\big) \]
for every $r\geqslant 2$, we conclude that $f\big(m+p^{n-w}q\big)-f(m)\in p^n\mathbb{Z}_p$, from which the periodicity of $\mathbf{e}_f$ follows.

\medskip

In this paper, we develop machinery to estimate sums of the form \eqref{GeneralSum} whose general term $\mathbf{e}_f(m)=e\big(f(m)/p^n\big)$ is a periodic function arising from some $f\in\mathbf{F}(w,y,\kappa,\lambda,u)$. We give several examples illustrating varied situations in which such arithmetic sums arise as well as the r\^ole of various parameters in Definition~\ref{ClassesDefinition}.

Character sums
\[ S_{\chi}(M,B)=\sum_{M<m\leqslant M+B}\chi(m), \]
for a Dirichlet character $\chi$ modulo $q=p^n$ are of classical interest and of direct relevance to our subconvexity application; we discuss them in section~\ref{LFunctionSection}. For definiteness, suppose that $\chi$ is primitive. According to Lemma~\ref{ParametrizationOfCharacters}, there is an $a_0\in\mathbb{Z}_p^{\times}$ such that
\[ \chi\big(1+p^{\kappa_1}t\big)=e\left(\frac{a_0\log_p(1+p^{\kappa_1}t)}{p^n}\right) \]
for every $t\in\mathbb{Z}$, where $\kappa_1=1+\iota'(2)$. Splitting our character sum into classes modulo $p^{\kappa}$ for a suitable $\kappa\geqslant\kappa_1$ and fixing, for every $1\leqslant c\leqslant p^{\kappa}$ such that $p\nmid c$, an integer $c'$ with $cc'\equiv 1\pmod{p^n}$, we have that
\begin{equation}
\label{SwitchToInnerSum}
S_{\chi}(M,B)=\sum_{1\leqslant c\leqslant p^{\kappa},\,p\nmid c}\chi(c)\sum_{(M-c)/p^{\kappa}<t\leqslant (M+B-c)/p^{\kappa}}e\left(\frac{a_0\log_p(1+p^{\kappa}c't)}{p^n}\right).
\end{equation}
Note that, for any $\omega\in\mathbb{Z}^{\times}_p$, since $\log_p(1+p^{\kappa}\omega t)\in p^{\kappa}\mathbf{I}_0(\mathbb{Z}_p)$ and $\left[\log_p(1+p^{\kappa}\omega t)\right]'=p^{\kappa}\omega\big(1+p^{\kappa}\omega t\big)^{-1},$ we have that $\log_p\big(1+p^{\kappa}\omega t\big)\in\mathbf{F}(\kappa,1,\kappa,\infty,\infty,\omega,\omega)$. In particular, we have that the phase $f_c(t)=a_0\log_p(1+p^{\kappa}c't)$ satisfies
\begin{equation}
\label{FcClass}
f_c\in\mathbf{F}(\kappa,1,\kappa,\infty,\infty,c',a_0c'),
\end{equation}
so the inner sum above can be treated using our techniques.

We see already in this example the need for the parameter $\kappa$ in \eqref{ClassFDefinition}. A given arithmetic summand, such as $\chi(m)$ in this example, may exhibit its true local behavior as the exponential with a phase expressed by a well-behaved $p$-adic power series when restricted to a suitable $p$-adic neighborhood, such as the arithmetic progression $c+p^{\kappa}m$ with $\kappa\geqslant\kappa_1=1+\iota'(2)$. On the other hand, the phase of our exponential is also properly a polynomial in $t$, and sometimes it can be convenient to take a larger value of $\kappa$ to obtain a lower-degree polynomial. In our case, the choices $\kappa>n/2+\text{O}(1)$ and $\kappa>n/3+\text{O}(1)$ produce exponential sums with a linear and quadratic phase, respectively, but of course they also require the splitting of the original sum into more pieces; we postpone the discussion of the relative utility of such choices to section~\ref{LFunctionSection}. This first example also showcases the flexibility given by the extra parameters $w$, $\omega$, and $\omega'$ in \eqref{ClassFDefinition}. As the discussion of periodicity of $\mathbf{e}_f$ indicates, changing the value of $w$ is effectively equivalent to changing the modulus to $p^{n-w}$, so, while this flexibility could just as well be achieved by adjusting $n$, and while the value of $w$ does change through the application of $A$- and $B$-processes, we find it convenient to keep $n$ as a fixed parameter and track the changes in $w$ separately. Finally the inclusion of $\iota$ in the exponent to $p^{\iota+\kappa}$ is simply a natural normalization in light of the properties of the $p$-adic power function $\pi^y(x)$ discussed in section \ref{Preliminaries} and is responsible for the elegant statement of Lemma~\ref{ImplicitFunctionTheorem}.

Throughout our method, we think of the term $p^w\omega'\big(1+p^{\iota+\kappa}\omega t\big)^{-y}$ as the main term in \eqref{ClassFDefinition}, and we track the remaining terms to ensure that they do not interfere with the leading term. The extra flexibility afforded by allowing these smaller terms is both pleasing for the scope of our method and essential; we proceed to explain one of their sources and the r\^ole of parameters $u$ and $\lambda$ in controlling them. Our $A$-process relies on a version of Weyl differencing and reduces estimation of the sum \eqref{GeneralSum} with $f\in\mathbf{F}(w,y,\kappa,\lambda,u)$ to sums involving a phase of the form
\[ f_{\chi,h}(t)=f\big(t+p^{\chi}h\big)-f(t)=p^{\chi}hf'(t)+p^{2\chi}h^2\sum_{r=2}^{\infty}p^{(r-2)\chi}h^{r-2}\frac{f^{(r)}(t)}{r!}; \]
see Lemma~\ref{AprocessClassesOfFunctions}. For example, if $f(t)=f_c(t)=a_0\log_p(1+p^{\kappa}c't)$ as in the inner sum in \eqref{SwitchToInnerSum}, $(p^{\chi}hf'(t))'=a_0c'{}^2p^{\chi+2\kappa}h(1+p^{\kappa}c't)^{-2}$. The infinite sum contributes a secondary term (whose derivative is $p^{u+w}g(t)$ in \eqref{ClassFDefinition}) which we must keep carrying while ensuring that it does not interfere with the main term, especially in light of the Implicit Function Theorem (Lemma~\ref{ImplicitFunctionTheorem}); this separation is the r\^ole of the parameter $u$. Moreover, the quantity $u+\lfloor\lambda\rfloor-\kappa-\iota'$ turns out to control both the new value of $u$ for the phase $f_{\chi,h}$ in Lemma~\ref{AprocessClassesOfFunctions} and the success of Lemmas~\ref{ImplicitFunctionTheorem} and \ref{StatPhaseFourier}. The parameter $\lambda$ is a measure of decay of coefficients of $g(t)$ (recall that, for $\lambda\in\mathbb{N}_0$, $g_0(p^{\lambda}t)\in\mathbf{I}_0[\lambda](\mathbb{Z}_p)$ for every $g_0\in\mathbf{I}_0(\mathbb{Z}_p)$, and compare with the form of the leading term) and, in a sense, helps the secondary term keep pace with the extra factor of $p^{\kappa}$ that the main term inherits with each differentiation. 

As our third example, we consider sums of Kloosterman sums. According to Sali\'e's classical evaluation (see \cite[p.322]{IwaniecKowalski}, \cite[Sec.7]{BlomerMilicevic}), the Kloosterman sum $S(m_1,m_2,q)$, for an odd prime power $q=p^n$ ($n\geqslant 2$) and $p\nmid m_1m_2$, vanishes unless $\big(\frac{m_1m_2}p\big)=1$, in which case it is explicitly given as
\[ S(m_1,m_2,q)=\mathop{\sum\nolimits^{\ast}}_{x\bmod q}e\left(\frac{m_1\bar{x}+m_2x}q\right)=q^{1/2}\sum_{\pm}\epsilon(\pm\ell,q)e\left(\pm\frac{2\ell}q\right), \]
where $\pm\ell$ are the two points satisfying the stationary phase condition $\ell^2\equiv m_1m_2\pmod{q}$ (cf. Lemma~\ref{StatPhase}), and $\epsilon(a,p^n)$ is the explicit unit factor as in Lemma~\ref{Gauss}, which depends only on $p$, the class of $a\bmod p$, and the parity of $n$. We refer the reader to \cite{BlomerMilicevic} for a more refined discussion of $p$-adic square roots and content ourselves here with the observation that, if $\pm\ell=\pm\ell(c)$ are the solutions to $\ell^2\equiv c\pmod{p^n}$, then $\ell(c+p^{\kappa}t)\equiv\pm\ell(c)(1+p^{\kappa}c't)^{1/2}\pmod{p^n}$ for every $\kappa\geqslant 1$, $t\in\mathbb{Z}$. We thus have, for example,
\begin{align*}
\sum_{M<m\leqslant M+B}&S(1,m,q)\\
&=q^{1/2}\sum_{\pm}\sum_{1\leqslant c\leqslant p^{\kappa},\,p\nmid c}\epsilon(\pm\ell(c),q)\sum_{(M-c)/p^{\kappa}<t\leqslant(M+B-c)/p^{\kappa}}e\left(\pm\frac{f_{c,\frac12}(t)}{p^n}\right),
\end{align*}
with the phase $f_{c,\frac12}(t)=2\ell(c)(1+p^{\kappa}c't)^{1/2}$ in the class $\mathbf{F}(\kappa,\frac12,\kappa,\infty,\infty,c',\pm 2\ell(c)c')$ of Definition~\ref{ClassesDefinition}. We remark that the good analytic behavior of derivatives of solutions to the stationary phase equation is not accidental and is instead genetic to the corresponding implicit function problem.

Many other cases of complete exponential sums to prime power moduli, such as for example hyper-Kloosterman sums, similarly give rise to exponentials with $p$-adically analytic phases satisfying the conditions of Definition~\ref{ClassesDefinition}. This involves an explicit evaluation of stationary points, which leads to an implicit function problem that can, under rather general conditions, be solved within the class $\mathbf{F}$; see Lemma~\ref{ImplicitFunctionTheorem}. In particular, this procedure also ultimately powers the duality approach in the proof of the $B$-process in section~\ref{Bprocess}. Solution of the implicit function problem is another important source of the secondary term in \eqref{ClassesDefinition} in applications. For another involved and hands-on example, in which the phase $f\in\mathbf{F}$ arises from a repeated explicit evaluation by the $p$-adic method of stationary phase, see \cite{BlomerMilicevic}.

\medskip

Finally we discuss translational invariance in the classes $\mathbf{F}(w,y,\kappa,\lambda,u)$. For a power series $f\in\mathbb{Q}^{\times}_p\mathbf{I}_0(\mathbb{Z}_p)$ and any $t,M\in\mathbb{Z}_p$, we have by \eqref{TaylorSeriesValues} an equality of values
\[ f(M+t)=\sum_{i=0}^{\infty}\frac{f^{(i)}(M)}{i!}t^i. \]
We may therefore consider $f(M+t)$ as a power series, which we denote by $f_M(t)$. Note that the formal derivative of $f_M$ agrees with the translation of the derivative $f'$, that is, $(f_M)'(t)=(f')_M(t)=f'(M+t)$. The following lemma is a simple but important verification.

\begin{lemma}
\label{TranslationInvariance}
Each of the classes $\mathbf{I}_0(\mathbb{Z}_p)$, $\mathbf{I}_{0,j}[\lambda](\mathbb{Z}_p)$, and $\mathbf{F}(w,y,\kappa,\lambda,u)$ is invariant under translations, that is, if $f$ belongs to one of these classes $\mathbf{C}$, then $f_M\in\mathbf{C}$ for every $M\in\mathbb{Z}_p$.
\end{lemma}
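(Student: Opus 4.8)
The plan is to dispatch the three classes in order, the first two feeding the third; throughout I use the Taylor expansion \eqref{TaylorSeriesValues}, which identifies $f_M$ with the power series $\sum_{i\geqslant 0}\big(f^{(i)}(M)/i!\big)t^i$, together with the fact noted just above the lemma that $(f_M)'=(f')_M$. For $\mathbf{I}_0(\mathbb{Z}_p)$: writing $f(t)=\sum_ka_kt^k$, the $i$-th coefficient of $f_M$ is $f^{(i)}(M)/i!=\sum_{k\geqslant i}\binom kia_kM^{k-i}$, which converges in $\mathbb{Q}_p$ since $|a_k|_p\to 0$; every summand lies in $\mathbb{Z}_p$ (as $M\in\mathbb{Z}_p$ and $\binom ki\in\mathbb{Z}$), so by the ultrametric inequality that coefficient lies in $\mathbb{Z}_p$ with absolute value at most $\sup_{k\geqslant i}|a_k|_p\to 0$, and hence $f_M\in\mathbf{I}_0(\mathbb{Z}_p)$. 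For $\mathbf{I}_{0,j}[\lambda](\mathbb{Z}_p)$ I would refine the same computation: for every $k\geqslant i$ one has $\ord_p\big(\binom kia_kM^{k-i}\big)\geqslant\ord_pa_k\geqslant\lceil(k+j)\lambda\rceil\geqslant\lceil(i+j)\lambda\rceil$ (using $\lambda\geqslant 0$), so the $i$-th coefficient of $f_M$ has $\ord_p$ at least $\lceil(i+j)\lambda\rceil$, which is precisely the defining condition.

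For $\mathbf{F}(w,y,\kappa,\lambda,u)$ I would translate the decomposition \eqref{ClassFDefinition} of $f'$ term by term, using $(f_M)'(t)=f'(M+t)$. The constant $p^w\gamma_0$ is unaffected; the error term $p^{u+w}g(t)$ becomes $p^{u+w}g_M(t)$ with $g_M\in\mathbf{I}_0[\lambda](\mathbb{Z}_p)$ by the case $j=0$ just handled; and $f_M\in\mathbb{Q}^{\times}_p\mathbf{I}_0(\mathbb{Z}_p)$ follows from the $\mathbf{I}_0$ case together with the fact that translation by $M$ is invertible (inverse: translation by $-M$), hence sends a nonzero series to a nonzero one. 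It remains to put the leading term in shape. For this I would use the polynomial identity $1+p^{\iota+\kappa}\omega(M+t)=u_0\big(1+p^{\iota+\kappa}\widetilde\omega\,t\big)$ with $u_0=1+p^{\iota+\kappa}\omega M\in 1+p^{\iota+\kappa}\mathbb{Z}_p$ and $\widetilde\omega=\omega u_0^{-1}\in\mathbb{Z}^{\times}_p$, followed by the multiplicativity \eqref{PowerFunctionMultiplicativity} of the $p$-adic power function with exponent $-y$ — legitimate as an identity of power series since both factors lie in $\mathbf{I}^1_{\kappa+\iota}(\mathbb{Z}_p)$ and $\iota(-y)=\iota$, $\iota'(-y)=\iota'$ — to get $p^w\omega'\big(1+p^{\iota+\kappa}\omega(M+t)\big)^{-y}=p^w\widetilde\omega'\big(1+p^{\iota+\kappa}\widetilde\omega\,t\big)^{-y}$ with $\widetilde\omega'=\omega'u_0^{-y}$. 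Since $u_0^{-y}=\pi^{-y}(p^{\iota+\kappa}\omega M)\in 1+p^{\kappa+\iota'}\mathbb{Z}_p\subseteq\mathbb{Z}^{\times}_p$ by the isometry and growth-modulus properties of $\pi^{-y}_0$ recorded in section~\ref{Preliminaries}, we get $\widetilde\omega'\in\mathbb{Z}^{\times}_p$, and therefore $f_M\in\mathbf{F}(w,y,\kappa,\lambda,u,\widetilde\omega,\widetilde\omega')$.

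I expect the only genuinely delicate point to be this last step: one must make sure the factorization of $1+p^{\iota+\kappa}\omega(M+t)$ and the subsequent passage to the $(-y)$-th power are valid as identities of power series in $t$ and not merely of values, and that the new parameters $\widetilde\omega,\widetilde\omega'$ really lie in $\mathbb{Z}^{\times}_p$ — both of which are supplied by the power-series form of \eqref{PowerFunctionMultiplicativity} and the properties of $\pi^{-y}_0$ from section~\ref{Preliminaries} (the constraint $\kappa\geqslant 1+\iota'(2)$ ensuring $\kappa>\rho_p$, so that $p^{\iota+\kappa}\omega M$ falls within the relevant disk of convergence). Everything else is routine ultrametric bookkeeping, so I anticipate no serious difficulty beyond it.
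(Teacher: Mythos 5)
Your proof is correct and follows essentially the same route as the paper's: Taylor coefficients $f^{(i)}(M)/i!$ bounded by $\sup_{k\geqslant i}|a_k|_p$ for the first two classes, and for $\mathbf{F}$ the factorization $1+p^{\iota+\kappa}\omega(M+t)=(1+p^{\iota+\kappa}\omega M)\bigl(1+p^{\iota+\kappa}\omega(1+p^{\iota+\kappa}\omega M)^{-1}t\bigr)$ together with \eqref{PowerFunctionMultiplicativity}, yielding $\widetilde\omega=\omega(1+p^{\iota+\kappa}\omega M)^{-1}$ and $\widetilde\omega'=\omega'(1+p^{\iota+\kappa}\omega M)^{-y}$ exactly as in the paper. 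You are somewhat more explicit than the paper about verifying $\widetilde\omega'\in\mathbb{Z}_p^{\times}$ via the growth modulus of $\pi^{-y}_0$, which is a legitimate and useful check that the paper leaves implicit.
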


\begin{proof}
Suppose that $f=\sum_{k=0}^{\infty}c_kt^k\in\mathbf{I}_0(\mathbb{Z}_p)$, so that $\lim|c_k|_p=0$, and $M\in\mathbb{Z}_p$. Then $f^{(i)}(M)/i!=\sum_{k=0}^{\infty}\binom{k+i}ic_{k+i}M^k$, so that
\[ \left|\frac{f^{(i)}(M)}{i!}\right|_p\leqslant\sup_{k\geqslant 0}\left|\binom{k+i}ic_{k+i}M^k\right|_p\leqslant\sup_{k\geqslant i}|c_k|_p\to 0\quad (i\to\infty). \]
This shows that $f_M$ is a power series with integral coefficients and that, in fact, $f_M\in\mathbf{I}_0(\mathbb{Z}_p)$. If, moreover, $f\in\mathbf{I}_{0,j}[\lambda](\mathbb{Z}_p)$, then the above estimate shows that
\[ \ord_p\big(f^{(i)}(M)/i!\big)\geqslant\inf_{k\geqslant i}\ord_pc_k\geqslant\inf_{k\geqslant i}\lceil\lambda(k+j)\rceil=\lceil\lambda(i+j)\rceil, \]
so that $f_M\in\mathbf{I}_{0,j}[\lambda](\mathbb{Z}_p)$ too.

Now, let $f\in\mathbf{F}(w,y,\kappa,\lambda,u,\omega,\omega')$, so that $f'$ satisfies \eqref{ClassFDefinition} with $g\in\mathbf{I}_0[\lambda](\mathbb{Z}_p)$. Then, using \eqref{PowerFunctionMultiplicativity}, we have for every $t\in B_{p^{-\kappa}}$ an equality of values
\begin{align*}
f'(M+t)
&=p^w\omega'\left(1+p^{\iota+\kappa}\omega(M+t)\right)^{-y}+p^w\gamma_0+p^{u+w}g(M+t)\\
&=p^w\omega'\left(1+p^{\iota+\kappa}\omega M\right)^{-y}\left[1+p^{\iota+\kappa}\omega\left(1+p^{\iota+\kappa}\omega M\right)^{-1}t\right]^{-y}\\
&\qquad+p^w\gamma_0+p^{u+w}g_M(t).
\end{align*}
The right-hand side of this equality is a power series which must coincide with $(f_M)'$. We have already proved that $f_M\in\mathbb{Q}^{\times}_p\mathbf{I}_0(\mathbb{Z}_p)$ and that $g_M\in\mathbf{I}_0[\lambda](\mathbb{Z}_p)$, so that $f_M\in\mathbf{F}\big(w,y,\kappa,\lambda,u,\omega(1+p^{\iota+\kappa}\omega M)^{-1},\omega'(1+p^{\iota+\kappa}\omega M)^{-y}\big)$.
\end{proof}

We now define $p$-adic exponent data and pairs. Let $P$ denote the set of prime numbers. For any sets $X$, $Y$, and any family of subsets $X_p\subset X$ ($p\in P$), let $\mathbf{J}(X_p;Y)$ be the set of all functions $g:\mathbb{Q}^{+}\times\bigsqcup_{p\in P}(\{p\}\times X_p)\to Y$ such that, for every $y\in\mathbb{Q}^{+}$, there is a finite subset $P_0(y)\subset P$ and a function $g_0:(P\setminus P_0(y))\times X\to Y$ such that $g(y,p,x)=g_0(p,x)$ for every $p\in P\setminus P_0(y)$ and every $x\in X_p$.

In particular, write $\mathbf{J}(Y):=\mathbf{J}(\emptyset;Y)$ for the set of functions $g(y,p):\mathbb{Q}^{+}\times P\to Y$ with the above properties, and $\mathbf{J}_1(Y):=\mathbf{J}(\mathbb{N}'_p\times\rho_p\mathbb{N};Y)$ (with $X=\mathbb{R}^{+}$ and $\mathbb{N}'_p=\iota'(2)+\mathbb{N}$) for the set of such functions $g(y,p,\kappa,\lambda):\mathbb{Q}^{+}\times\bigsqcup_{p\in P}(\{p\}\times\mathbb{N}'_p\times\rho_p\mathbb{N})\to Y$.

Classes $\mathbf{J}(Y)$ and $\mathbf{J}_1(Y)$ for appropriate $Y$ are suitable universes for certain components of $p$-adic exponent data in Definition~\ref{ExponentPairsDefinition}. For example, with variables keeping their meaning from Definition~\ref{ClassesDefinition}, $\kappa_0(y,p)\in\mathbf{J}(\mathbb{N})$ is the smallest value of $\kappa$ to which our datum applies, and it may well differ from its generic value for some exceptional $(y,p)$. For example, if a datum is obtained using our $A$- and $B$-processes, and if one of the iterations involves the power series $\pi_{[\kappa]}^{7y+2}(x)$, then it may be necessary to require a higher value of $\kappa$ for those pairs $(y,p)$ for which $\ord_p(7y+2)\neq 0$; when estimating \eqref{GeneralSum}, this simply corresponds to a finer initial splitting as in \eqref{SwitchToInnerSum}. We require $p$-adic exponent data to be universal in that they ultimately apply to all values of $y$ and $p$. However, as our examples demonstrate, in a typical application, we need to estimate a sum involving a phase with one specific value of $y$. What really matters, then, is that our method applies in a uniform (tightest possible) way for all primes outside a finite exceptional set (which may depend on $y$), and with a possible finite adjustment of initial conditions at the exceptional primes; this is precisely the content of our definitions, with $Y$ denoting the universe of assumed values and with $\mathbf{J}_1(Y)$ also taking into account the possible dependence of other parameters on specific values of $\kappa$ and $\lambda$. The reader interested in applications may treat  quantities in classes $\mathbf{J}(Y)$ and $\mathbf{J}_1(Y)$ simply as explicit ``expressions'' in terms of other parameters $y$, $p$, $\kappa$, $\lambda$, knowing that their form suffices for the purpose of estimating any given exponential sum to which our method applies.

We note on the side that, in all $p$-adic exponent data we produce, the functions in corresponding classes $\mathbf{J}(X_p;Y)$ actually satisfy the following stronger uniformity condition in $y$ and $p$: there is a finite set $P_0\subset P$, a finite set of non-vanishing linear forms $l_i(y)=a_iy+b_i$ ($1\leqslant i\leqslant I$), and functions $g_0:\mathbb{Z}^I\times P\times X\to Y$ and $g'_0:\mathbb{Z}^I\times X\to Y$, such that $f(y,p,x)=g_0\big((\ord_pl_i(y))_{i=1}^I,p,x\big)$ for every $y\in\mathbb{Q}^{+}$, $p\in P$, and $x\in X_p$, as well as $g_0(z,p,x)=g'_0(z,x)$ for every $z\in\mathbb{Z}^I$, $p\in P\setminus P_0$, and $x\in X_p$.

We are now ready for the main definition.

\begin{definition}
\label{ExponentPairsDefinition}
Let $\mathbf{Q}$ be the set of all quintuples
\begin{equation}
\label{Quadruple}
q=\big(k,\,\ell,\,\,r,\,\delta,\,\,(n_0,u_0,\kappa_0,\lambda_0)\big)
\end{equation}
where $k,\ell\in\mathbb{R}$, $0\leqslant k\leqslant\frac12\leqslant\ell\leqslant 1$, $r\in\mathbf{J}_1(\mathbb{R})$, $\delta\in\mathbb{R}^{+}_0$, $n_0,u_0\in\mathbf{J}_1(\mathbb{N})$, $\kappa_0\in\mathbf{J}(\mathbb{N})$, $\lambda_0\in\mathbf{J}(\mathbb{R}^{+}_0)$, and $n_0(y,p,\kappa,\lambda)>\kappa+\iota'(y)$.

We say that a quintuple $q\in\mathbf{Q}$ as in \eqref{Quadruple} is a $p$-adic exponent datum if, for every $p\in P$, $y\in\mathbb{Q}^{+}$, $w\in\mathbb{Z}$, $\kappa\in\mathbb{N}$ with $\kappa\geqslant 1+\iota'(2)$, $\lambda\in\rho_p\mathbb{N}$, $n,u\in\mathbb{N}$ such that
\[ \kappa\geqslant\kappa_0(y,p),\quad\lambda\geqslant\lambda_0(y,p),\quad n\geqslant w+n_0(y,p,\kappa,\lambda),\quad u\geqslant u_0(y,p,\kappa,\lambda), \]
and for every $f\in\mathbf{F}(w,y,\kappa,\lambda,u)$, $M\in\mathbb{Z}$, and $0<B\leqslant p^{n-w-\kappa-\iota'}$, we have the estimate
\begin{equation}
\label{SharpDef}
\sum_{M<m\leqslant M+B}e\left(\frac{f(m)}{p^n}\right)\ll p^{r}\left(\frac{p^{n-w-\kappa-\iota'}}B\right)^kB^{\ell}\big(\log p^{n-w-\kappa-\iota'}\big)^{\delta},
\end{equation}
where $r=r(y,p,\kappa,\lambda)$, and the implied constant depends only on the datum $q$.

We say that a pair
\[ \pi=(k,\ell) \]
of non-negative numbers is a $p$-adic exponent pair if $q=\big(k,\ell,r,\delta,(n_0,u_0,\kappa_0,\lambda_0)\big)$ is a $p$-adic exponent datum for some $r\in\mathbf{J}_1(\mathbb{R})$, $\delta\in\mathbb{R}^{+}_0$, $n_0,u_0\in\mathbf{J}_1(\mathbb{N})$, $\kappa_0\in\mathbf{J}(\mathbb{N})$, $\lambda_0\in\mathbf{J}(\mathbb{R}^{+}_0)$.
\end{definition}

Every $p$-adic exponent datum $q$ carries two kinds of quantities: the values of $k$, $\ell$, $r$, and $\delta$ describe the upper bound \eqref{SharpDef}, while $(n_0,u_0,\kappa_0,\lambda_0)$ can be thought of as ``initial conditions'' that control the moduli $p^n$ and the classes $\mathbf{F}(w,y,\kappa,\lambda,u)$ of phases $f$ to which this estimate applies. We emphasize that, while the implied constant in \eqref{SharpDef} may be different from one $p$-adic exponent datum to another, it is, for a given datum $q$, absolute, and \eqref{SharpDef} holds uniformly across all other parameters, including $p$, $y$, $w$, $\kappa$, $\lambda$, $n$, $u$, $f$, $M$, and $B$.

Note that $(0,1)$ is trivially a $p$-adic exponent pair, as $(0,1,0,0,(\kappa+\iota'+1,1,1+\iota'(2),\rho_p))$ is a $p$-adic exponent datum. Further, note that the estimate on the right-hand side of \eqref{SharpDef} is an increasing function of $B$, so that, when applying this estimate, we may freely use an upper bound on $B$ instead of its exact value. We also mention that any $\delta\in\mathbf{J}_1(\mathbb{R}^{+}_0)$ would suffice for applications; we ask for $\delta\in\mathbb{R}^{+}_0$ simply because this will be the case in all $p$-adic exponent data we construct.

We now describe a typical use of Definition~\ref{ExponentPairsDefinition}. The estimate \eqref{SharpDef} holds uniformly in all parameters. In a typical application, $p^n$ is the principal parameter, $B$ is a certain power of $p^n$ (depending on $\kappa$), and, upon choosing an allowable $\kappa$, the phase $f$ and hence $w$, $y$, $\lambda$, $u$ are all fixed. In a depth-aspect problem, the $p$-adic exponent pair $(k,\ell)$ controls the principal power dependence of our estimate on $p^n$, so, in practice, one first chooses the pair $(k,\ell)$ to optimize this dependence (for specific relative sizes of $B$ and $p^n$ and the desired type of result) and then considers the corresponding datum. For example, the pair $(\tfrac12,\tfrac12)$, given by the $p$-adic exponent datum \eqref{ExponentDatumHalfHalf}
\[ \omega_{1/2}=\Big(\tfrac12,\tfrac12,0,1,\big(\kappa+\iota'+1+\iota'(12),\max(\kappa-\lfloor\lambda\rfloor+\iota'+1,1),1+\iota'(4),\rho_p\big)\Big), \]
 is well suited for very long sums \eqref{GeneralSum}, yielding in \eqref{SharpDef} the upper bound
 \[ \sum_{M<m\leqslant M+B}e\left(\frac{f(m)}{p^n}\right)\ll\big(p^{n-w-\kappa-\iota'}\big)^{1/2}\log p^{n-w-\kappa-\iota'}, \]
valid for all $p^n$ and $f\in\mathbf{F}(w,y,\kappa,\lambda,u)$ with $\kappa\geqslant 1+\iota'(4)$, $\lambda\geqslant\rho_p$, $n\geqslant w+\kappa+\iota'+1+\iota'(12)$, $u\geqslant\max(\kappa-\lfloor\lambda\rfloor+\iota'+1,1)$ and for all $M\in\mathbb{Z}$ and $0<B\leqslant p^{n-w-\kappa-\iota'}$, which is uniform in $B$ and can be seen as a variant of the P\'olya-Vinogradov inequality. Section~\ref{LFunctionSection} contains a supply of explicit $p$-adic exponent data yielded by our method that one can choose from, as we do in the course of proving the sub-Weyl subconvex bound \eqref{ExplicitEstimate}. Each of these $p$-adic exponent pairs arises from $(0,1)$ by finitely many $A$- and $B$-processes, which in turn give rise to successive $p$-adic exponent data $q$. With each application, the quantities in $q$ change and become fairly complicated (cf.~the statement of Lemma~\ref{Aprocess}), but they always take a dramatically simpler form away from finitely many $p$, such as for $p\not\in\{2,3\}$, $p\nmid y$ in the case of $\omega_{1/2}$. For such generic $p$, the original sum is split as in \eqref{SwitchToInnerSum} with $\kappa\geqslant\kappa_0$ (the latter being a constant for a fixed $y$ and non-execeptional $p$), and, assuming that the (generally mild) ``separation'' conditions $\lambda\geqslant\lambda_0$ and $u\geqslant u_0$ are met, the inner sum is estimated by \eqref{SharpDef}. Since the $p$-adic exponent datum used ultimately applies to all $p$, this proof is then easily adjusted at the finitely many exceptional primes, without necessarily impacting the final result. We refer the reader to the proof of Theorem~\ref{EstimateInTermsOfkl} and the discussion around \eqref{ExplicitEstimate} for a sample execution of this approach. Finally, all these calculations simplify even further if one is willing to simply treat constants in certain exponents of $p$ (such as $\kappa$, $n_0$, $u_0$, $r$) as $\text{O}(1)$, a shortcut that we do not take but that would be perfectly acceptable in a purely depth-aspect problem (when $p$ is considered fixed).

\medskip

We proceed to comment on why the conditions $0\leqslant k\leqslant\frac12\leqslant\ell\leqslant 1$ are included in Definition~\ref{ExponentPairsDefinition} and collect some additional useful information along the way. Consider $f(t)=p^{w-\kappa-\iota}(-y+1)^{-1}\big(1+p^{\kappa+\iota}t\big)^{-y+1}$ for $y\neq 1$, and $f(t)=p^{w-\kappa}\log_p\big(1+p^{\kappa}t\big)$ for $y=1$. Let
\[ S(a)=\sum_{M<m\leqslant M+B}e\left(\frac{af(m)}{p^n}\right). \]
Note that, for $a\in\mathbb{Z}^{\times}_p$, $af(t)\in\mathbf{F}(w,y,\kappa,\infty,\infty,1,a)$. We have that
\[ \sum_{a\in(\mathbb{Z}/p^n\mathbb{Z})^{\times}}|S(a)|^2=\mathop{\sum\sum}_{M<m_1,m_2\leqslant M+B}\sum_{a\in(\mathbb{Z}/p^n\mathbb{Z})^{\times}}e\left(\frac{a(f(m_1)-f(m_2))}{p^n}\right). \]
Recall from section~\ref{Preliminaries} that $M_r\pi^0_{-y+1}\doteq |-y+1|_pr$ for all $r<r_pp^{-\iota}$ and $M_r\lambda\doteq r$ for all $r<r_p$. It follows easily that $\ord_p(f(m_1)-f(m_2))=w+\ord_p(m_1-m_2)$ for every $m_1,m_2\in\mathbb{Z}_p$. Therefore, if $B\leqslant p^{n-w-1}$ (and so certainly throughout the range $1\leqslant B\leqslant p^{n-w-\kappa-\iota'}$), the inner sum vanishes unless $m_1=m_2$, so that
\[ \sum_{a\in(\mathbb{Z}/p^n\mathbb{Z})^{\times}}|S(a)|^2=\varphi(p^n)\cdot B. \]
It follows that $|S(a)|\geqslant B^{1/2}$ for at least one $a\in(\mathbb{Z}/p^n\mathbb{Z})^{\times}$. It follows that, if an estimate of the form \eqref{SharpDef} is to hold for all $B$ in some interval $I\subseteq[1,p^{n-w-\kappa-\iota'}]$, we must have
\begin{equation}
\label{NoBetterThanSqrt}
p^r\left(\frac{p^{n-w-\kappa-\iota'}}B\right)^kB^{\ell}\big(\log p^{n-w-\kappa-\iota'}\big)^{\delta}\gg B^{1/2}
\end{equation}
throughout the entire range $B\in I$. This conclusion (``no better than square root cancellation'') will be used several times. In particular, with the choice $B=1$ and $n-w=n_0$, we have that $p^{r+(n_0-\kappa-\iota')k}(\log p^{n_0-\kappa-\iota'})^{\delta}\gg 1$. On the other hand, taking $B=p^{n-w-\kappa-\iota'}$, we see that the defining property \eqref{SharpDef} cannot hold with $\ell<\frac12$.

On the other hand, consider the behavior when $f(t)$ is as above, $M$ and $B$ are arbitrary but fixed, and $n\to\infty$. An elementary application of Dirichlet's Box Principle shows that we can find an $a\in(\mathbb{Z}/p^n\mathbb{Z})\setminus(p^n\mathbb{Z})$ such that
\begin{align*}
p^{-w}\big(af(M+1),af(M+2),&\dots,af(M+B)\big)\\
&\in \mathbb{Q}^{/B/}_p+p^{n-w}\mathbb{Z}^B_p+\big([0,p^{n-w}/\lfloor p^{n/B}\rfloor]\cap\mathbb{Z}\big)^B,
\end{align*}
where $\mathbb{Q}^{/B/}_p=\{(q,\dots,q)\in\mathbb{Q}^B_p:q\in\mathbb{Q}_p\}$. For this choice of $a$, we have that $|S(a)|=|B+\text{O}(Bp^{-n/B})|\gg B$; on the other hand, $af(t)\in\mathbf{F}(\ord_pa+w,y,\kappa,\infty,\infty,1,a|a|_p)$ and $\ord_pa\leqslant n-\lfloor n/B\rfloor$, so that $p^{n-(\ord_pa+w)-\kappa-\iota'}\geqslant p^{\lfloor n/B\rfloor-w-\kappa-\iota'}$ in \eqref{SharpDef}. Taking $n\to\infty$, we see that no estimate of the form \eqref{SharpDef} can hold with $k<0$.

We have seen how, for two different reasons (not entirely unlike the heuristics behind large sieve estimates), every $p$-adic exponent datum that is to satisfy \eqref{SharpDef} must have $k\geqslant 0$ and $\ell\geqslant\frac12$. Finally, there is no need to consider data with ($k\geqslant 0$ and) $\ell>1$, since such an estimate would be worse than that provided by the trivial datum $(0,1,0,0,(\kappa+\iota'+1,1,1+\iota'(2),\rho_p))$ in most ranges. Similarly, there is no need to consider data with $k>\frac12$ (and $\ell\geqslant\frac12$) since the estimate obtained would be worse than that provided by the first non-trivial $p$-adic exponent datum \eqref{ExponentDatumHalfHalf}.

\medskip

The following Lemma~\ref{EquivalenceSharpSmooth}, which states that the exponent datum condition may be verified (with a minimal loss) over either sharp or smooth cutoff functions, will be convenient. We will need several notations. Let $C^1_0(\mathbb{R})$ denote the set of continuously differentiable functions $h:\mathbb{R}\to\mathbb{C}$ such that $\lim_{|t|\to\infty}|t|^N(|h(t)|+|h'(t)|)=0$ for every $N\in\mathbb{N}$. For an $h\in C^1_0(\mathbb{R})$, denote
\begin{equation}
\label{DefinitionHStar}
\|h\|_{\star}=\inf_{t_0\in\mathbb{R}}\int_{-\infty}^{\infty}\big(|t-t_0|+1\big)|h'(t)|\,\text{d}t.
\end{equation}
Note that the quantity $\|h\|_{\star}$ is invariant under translations, that is, each of the translates $h_x(t)=h(t+x)$ ($x\in\mathbb{R}$) has $\|h_x\|_{\star}=\|h\|_{\star}$.

Let $\mathcal{C}=(\mathbf{C}_i)_{i\in I}$ be a family of classes $\mathbf{C}_i$ of power series in $\mathbb{Q}^{\times}_p\mathbf{I}_0(\mathbb{Z}_p)$, each of which is invariant under translations in the sense of Lemma~\ref{TranslationInvariance}, and let $0\leqslant k\leqslant\frac12\leqslant\ell\leqslant 1$, $\delta\in\mathbb{N}_0$, $n_0:I\to\mathbb{N}_0$, $w:I\to\mathbb{Z}$, $r:I\to\mathbb{R}$. We say that a triple $\tau=(k,\ell,(r,w,n_0))$ satisfies the condition $H(\delta)$ if the estimate
\[ \sum_{M<m\leqslant M+B}e\left(\frac{f(m)}{p^n}\right)\ll p^{r(i)}\left(\frac{p^{n-w(i)}}B\right)^kB^{\ell}\big(\log p^{n-w(i)}\big)^{\delta} \]
holds, with a uniform implied constant depending only on $\tau$ and $\delta$ (so, explicitly \textit{not} on $i\in I$), for every $i\in I$, every $f\in\mathbf{C}_i$, and every $n\geqslant n_0(i)$, $M\in\mathbb{Z}$, and $0<B\leqslant p^{n-w(i)}$. We will also write the above condition with $r$ and $w$ in place of $r(i)$ and $w(i)$ for brevity. We say that $\tau$ satisfies the condition $H(\delta)^{sq}$ if, additionally, the right hand side of the above bound is $\gg B^{1/2}$ uniformly for every $i\in I$ and all $0<B\leqslant p^{n-w(i)}$.

As the example most important for us, a quintuple $q=(k,\ell,r,\delta,(n_0,u_0,\kappa_0,\lambda_0))$ is a $p$-adic exponent datum if and only if, for the collection $\mathcal{C}=\{\mathbf{F}(w,y,\kappa,\lambda,u):w\in\mathbb{Z},y\in\mathbb{Q}^{+},\kappa\geqslant\kappa_0,\lambda\geqslant\lambda_0,u\geqslant u_0\}$, the triple $(k,\ell,(r,w+\kappa+\iota'(y),n_0))$ satifies the condition $H(\delta)$. We have already seen in \eqref{NoBetterThanSqrt} that, for these triples, $H(\delta)$ automatically implies $H(\delta)^{sq}$. Recall that each of the classes $\mathbf{F}(w,y,\kappa,\lambda,u)$ is invariant under translations by Lemma~\ref{TranslationInvariance}.

With the same notation, we say that $\tau$ satisfies the condition $H_{\text{sm}}(\delta)$ if the estimate
\[ \sum_{m\in\mathbb{Z}}e\left(\frac{f(m)}{p^n}\right)h\left(\frac mB\right)\ll c(h)\cdot p^{r(i)}\left(\frac{p^{n-w(i)}}B\right)^kB^{\ell}\big(\log p^{n-w(i)}\big)^{\delta} \]
holds, with a uniform implied constant depending only on $\tau$ and $\delta$ and with $c(h)$ depending only on the cutoff function $h$, for every $i\in I$, every $f\in\mathbf{C}_i$, and every $n\geqslant n_0(i)$, $0<B\leqslant p^{n-w(i)}$, and $h\in C^1_0(\mathbb{R})$. We say that $\tau$ satisfies the condition $H_{\text{sm}}^{\sharp}(\delta)$ if the above holds with
\[ c(h)=\|h\|_{\star}. \]
The conditions $H_{\text{sm}}(\delta)^{sq}$ and $H_{\text{sm}}^{\sharp}(\delta)^{sq}$ are defined analogously. Finally, denote
\[ \delta_{1/2}=\begin{cases} 1,&(k,\ell)=(\tfrac12,\tfrac12),\\ 0,&\text{else}; \end{cases}\qquad \delta_{01}=\begin{cases} 1,&(k,\ell)=(0,1),\\ 0,&\text{else}.\end{cases} \]

\begin{lemma}
\label{EquivalenceSharpSmooth}
Let $\mathcal{C}=(\mathbf{C}_i)_{i\in I}$ be a family of classes $\mathbf{C}_i$ of power series in $\mathbb{Q}^{\times}_p\mathbf{I}_0(\mathbb{Z}_p)$, each of which is invariant under translations in the sense of Lemma~\ref{TranslationInvariance}. Then, for every triple $\tau=(k,\ell,(r,w,n_0))$, $0\leqslant k\leqslant\frac12\leqslant\ell\leqslant 1$, $n_0:I\to\mathbb{N}_0$, $w:I\to\mathbb{Z}$, $r:I\to\mathbb{R}$, and for every $\delta\in\mathbb{N}_0$, we have the following implications:
\[ H(\delta)\implies H^{\sharp}_{\textnormal{sm}}(\delta)\implies H_{\textnormal{sm}}(\delta),\qquad H_{\textnormal{sm}}(\delta)^{sq}\implies H(\delta+\delta_{1/2})^{sq}. \]
\end{lemma}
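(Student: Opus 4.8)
\emph{Proof proposal.} The link $H^{\sharp}_{\textnormal{sm}}(\delta)\implies H_{\textnormal{sm}}(\delta)$ is immediate: take $c(h)=\|h\|_{\star}$, which is finite for every $h\in C^1_0(\mathbb{R})$ since $h'$ decays rapidly and so $\int(|t|+1)|h'(t)|\,\mathrm{d}t<\infty$. The substance lies in the two other implications; throughout, I fix $i\in I$, $f\in\mathbf{C}_i$, $n\geqslant n_0(i)$, and abbreviate $P=p^{n-w(i)}$, $r=r(i)$.

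For $H(\delta)\implies H^{\sharp}_{\textnormal{sm}}(\delta)$ the plan is to realize the smooth cutoff as a continuous superposition of sharp ones via partial summation. For a real parameter $y_0$, let $\widetilde N(X)$ denote the signed partial sum of $e(f(m)/p^n)$ over the integers $m$ lying between $y_0$ and $y_0+X$; Abel summation then yields the exact identity
\[ \sum_{m\in\mathbb{Z}}e\!\left(\frac{f(m)}{p^n}\right)h\!\left(\frac mB\right)=-\int_{-\infty}^{\infty}\widetilde N(Bu)\,h'\!\left(u+\frac{y_0}{B}\right)\mathrm{d}u, \]
the boundary terms vanishing because $|\widetilde N(X)|\ll|X|$ trivially while $h$ decays rapidly. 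Now $\widetilde N(X)$ is, up to a sign and an adjustment of length by $O(1)$, a sharp sum $\sum_{M<m\leqslant M+B''}$ with $B''\asymp|X|$, so $H(\delta)$ applies directly when $|X|\leqslant P$; for $|X|>P$ one exploits that $m\mapsto e(f(m)/p^n)$ has period $P$ to reduce to $B''\leqslant P$ at the cost of a factor $O(|X|/P)$. This gives $|\widetilde N(X)|\ll p^{r}(\log P)^{\delta}\,\Phi(|X|)$, with $\Phi(Y)=(P/Y)^{k}Y^{\ell}$ for $1\leqslant Y\leqslant P$, $\Phi(Y)=P^{\ell-1}Y$ for $Y>P$, and $\Phi(Y)=P^{k}$ for $Y\leqslant1$ (using $H(\delta)$ at $B=1$, which forces $1\ll p^{r}P^{k}(\log P)^{\delta}$). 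The constraints $0\leqslant k\leqslant\tfrac12\leqslant\ell\leqslant1$ together with $B\leqslant P$ then yield the elementary pointwise bound $\Phi(B|u|)\ll(P/B)^{k}B^{\ell}(|u|+1)$ for all $u$, to be checked by splitting into the three regimes of $\Phi$; plugging this into the identity above gives
\[ \left|\sum_{m}e\!\left(\frac{f(m)}{p^n}\right)h\!\left(\frac mB\right)\right|\ll p^{r}(\log P)^{\delta}(P/B)^{k}B^{\ell}\int_{-\infty}^{\infty}\bigl(|s-y_0/B|+1\bigr)|h'(s)|\,\mathrm{d}s, \]
and choosing $y_0/B$ to realize the infimum in \eqref{DefinitionHStar} replaces the last integral by $\|h\|_{\star}$, with an implied constant depending only on $\tau$ and $\delta$. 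The one slightly delicate point is the emergence of the \emph{weighted} norm $\|h\|_{\star}$ rather than a bare $\|h'\|_{1}$ — this is exactly what the completion over full periods (the regime $\Phi(Y)\asymp P^{\ell-1}Y$) together with the inequality for $\Phi(B|u|)$ buy.

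For $H_{\textnormal{sm}}(\delta)^{sq}\implies H(\delta+\delta_{1/2})^{sq}$ the plan is a dyadic smooth resolution of $\mathbf{1}_{(M,M+B]}$ adapted to its two endpoints. Fixing a smooth step function once and for all, I would decompose $\mathbf{1}_{(M,M+B]}=\sum_{j}\Xi_{j}+E$ into $O(\log B)$ many $C^{\infty}$ bumps so that at each level $\nu$ with $0\leqslant\nu\leqslant L\asymp\log B$ there are $O(1)$ bumps $\Xi_j$ of scale $\asymp 2^{-\nu}B$ (a handful of middle bumps at scale $\asymp B$, then, for each endpoint, bumps at geometrically shrinking scales marching towards it), consecutive bumps joined so their sum is $1$ on $(M,M+B]$, while $E$ is supported on $O(1)$ integers near the endpoints with $|E|\leqslant1$; crucially, after rescaling by its scale $s_j$ and shifting its center to an integer $c_j$, each $\Xi_j$ is one of \emph{finitely many} fixed shapes $G$. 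Since $\mathbf{C}_i$ is translation-invariant, $\sum_{m}e(f(m)/p^n)\Xi_j(m)=\sum_{m'}e(f_{c_j}(m')/p^n)G(m'/s_j)$ with $f_{c_j}\in\mathbf{C}_i$ and $s_j\leqslant B\leqslant P$, so $H_{\textnormal{sm}}(\delta)$ applies with the \emph{fixed} cutoff $G$ and gives $\ll p^{r}(P/B)^{k}B^{\ell}(\log P)^{\delta}\,2^{-\nu(\ell-k)}$, as $c(G)$ is an absolute constant. Summing over $j$ and adding the $O(1)$ contribution of $E$ produces the target bound: for $\ell>k$ the series $\sum_{\nu}2^{-\nu(\ell-k)}$ converges and costs only a bounded factor ($\delta_{1/2}=0$ suffices), whereas in the borderline case $(k,\ell)=(\tfrac12,\tfrac12)$ each of the $\asymp\log B\ll\log P$ levels contributes $O(1)$, producing exactly the extra $(\log P)^{\delta_{1/2}}$; the $^{sq}$ clause of the conclusion follows from that of the hypothesis applied with a single fixed reference cutoff (which also certifies $O(1)\ll$ right-hand side). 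I expect this to be the main obstacle: because $c(h)$ in $H_{\textnormal{sm}}(\delta)$ is only required to be \emph{some} function of $h$, with no uniformity, a naive mollification of $\mathbf{1}_{(M,M+B]}$ at a single scale $\Delta$ fails (the cutoff then varies with $B$ and its $c$-value is uncontrolled, while the boundary error $O(\Delta)$ is too large to absorb when $\Delta\asymp B$); the resolution must be assembled from boundedly many shapes with scales marching geometrically down to $O(1)$, and it is precisely the failure of $\sum 2^{-\nu(\ell-k)}$ to converge at $(k,\ell)=(\tfrac12,\tfrac12)$ that the correction $\delta_{1/2}$ absorbs.
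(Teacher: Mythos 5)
Your proof is correct and follows essentially the same strategy as the paper: Abel/partial summation with blockwise application of $H(\delta)$ for the first implication, and a dyadic smooth resolution built from finitely many fixed bump shapes, exploiting translation invariance of $\mathbf{C}_i$, for the third, with the geometric-series/$\log$ dichotomy at $(k,\ell)=(\tfrac12,\tfrac12)$ handled identically. The one place where you diverge is worth flagging: you take the dyadic scales all the way down to $O(1)$ so the endpoint error $E$ is $O(1)$, whereas the paper stretches a single self-similar partition $\tilde\phi_{\beta-1}$ by $2^{\beta-1}C$ with $2^\beta\asymp B^{1/2}$, accepting an $O(B_1+C)=O(B^{1/2})$ boundary error (absorbed by the $^{sq}$ clause just as your $O(1)$ is). The paper's choice is the cleaner one to implement, because the stretched $\tilde\phi_{\beta-1}$ automatically has integer centers $M+(2^\beta-2^{\beta-j})C$ and integer scales $2^{\beta-j}C$, so that every bump is exactly a translate/dilate of one of the fixed shapes $\phi$, $\phi_0$; going to scale $O(1)$ as you propose requires a more careful construction (e.g.\ via the binary expansion of $B$) to keep all centers integral and all rescaled bumps drawn from a bounded list of shapes, and you gloss over that. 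This is fixable and doesn't affect the exponent, but if you want the $O(1)$-error variant you should spell out the decomposition explicitly.
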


\begin{proof}
Suppose that $H_\text{sm}(\delta)^{sq}$ holds. Fix a smooth, compactly supported cutoff function $\phi\in C^{\infty}_c(\mathbb{R})$ with the following properties:
\begin{itemize}
\item $0\leqslant\phi(x)\leqslant 1$ for all $x$.
\item $\phi(x)=0$ for all $x\not\in\left[0,\frac34\right]$.
\item $\phi(x)+\phi\left(\frac{1+x}2\right)=1$ for all $0\leqslant x\leqslant\frac12$.
\end{itemize}
Using $\phi$, we define smooth, compactly supported cutoff functions $\phi_i\in C^{\infty}_c(\mathbb{R})$ ($i\in\mathbb{N}_0$) as follows: let
\[ \phi_0(x)=\begin{cases} 1-\phi(|x|),&|x|\leqslant\frac12,\\ 0,&|x|>\frac12, \end{cases} \]
and, for $i\geqslant 1$, let $\phi_i(x)=\phi\left(2^{i-1}|x|-(2^{i-1}-1)\right)$. Then, for all $i\geqslant 1$,$0\leqslant\phi_i(x)\leqslant 1$ for all $x$, $\phi_i(x)=0$ for all $x$ with $|x|\not\in\left[1-\frac1{2^{i-1}},1-\frac1{2^{i+1}}\right]$, and $\phi_{i-1}(x)+\phi_i(x)=1$ for all $x$ with $|x|\in\left[1-\frac1{2^{i-1}},1-\frac1{2^i}\right]$. Therefore, the cutoff function $\tilde{\phi}_i(x)=\sum_{j=0}^i\phi_j(x)$ satisfies:
\begin{itemize}
\item $0\leqslant\tilde{\phi}_i(x)\leqslant 1$ for all $x$,
\item $\tilde{\phi}_i(x)=0$ for all $x$ with $|x|\geqslant 1-\frac1{2^{i+1}}$,
\item $\tilde{\phi}_i(x)=1$ for all $x$ with $|x|\leqslant 1-\frac1{2^i}$.
\end{itemize}

Now, let $f\in\mathbf{C}_i$, and let $n\geqslant n_0(i)$, $M\in\mathbb{Z}$, and $0<B\leqslant p^{n-w(i)}$ be given. Since the class $\mathbf{C}_i$ is closed under translations, we have that $f_{M'}\in\mathbf{C}_i$ for every $M'\in\mathbb{Z}$. Write $B=2^{\beta}C+B_1$, where $0\leqslant B_1<2^{\beta}$, and $\beta\in\mathbb{N}$ will be suitably chosen later. Then
\begin{align*}
\sum_{M<m\leqslant M+B}&e\left(\frac{f(m)}{p^n}\right)
=\sum_{m\in\mathbb{Z}}e\left(\frac{f(m)}{p^n}\right)\tilde{\phi}_{\beta-1}\left(\frac{m-M-2^{\beta-1}C}{2^{\beta-1}C}\right)+\text{O}(B_1+C)\\
&=\sum_{m\in\mathbb{Z}}e\left(\frac{f_{M+2^{\beta-1}C}(m)}{p^n}\right)\phi_0\left(\frac{m}{2^{\beta-1}C}\right)+\text{O}\left(\frac{B}{2^{\beta}}+2^{\beta}\right)\\
&\hphantom{{}={}}+\sum_{m\in\mathbb{Z}}\sum_{j=1}^{\beta-1}e\left(\frac{f_{M+2^{\beta-1}C}(m)}{p^n}\right)\phi\left(\frac{|m|-(2^{\beta-1}-2^{\beta-j})C}{2^{\beta-j}C}\right)\\
&=\sum_{m\in\mathbb{Z}}e\left(\frac{f_{M+2^{\beta-1}C}(m)}{p^n}\right)\phi_0\left(\frac{m}{2^{\beta-1}C}\right)\\
&\hphantom{{}={}}+\sum_{j=1}^{\beta-1}\sum_{m\in\mathbb{Z}}e\left(\frac{f_{M+(2^{\beta}-2^{\beta-j})C}(m)}{p^n}\right)\phi\left(\frac{m}{2^{\beta-j}C}\right)\\
&\hphantom{{}={}}+\sum_{j=1}^{\beta-1}\sum_{m\in\mathbb{Z}}e\left(\frac{f_{M+2^{\beta-j}C}(m)}{p^n}\right)\phi\left(-\frac{m}{2^{\beta-j}C}\right)+\text{O}\left(\frac{B}{2^{\beta}}+2^{\beta}\right).
\end{align*}
Since each translate of $f$ belongs to $\mathbf{C}_i$ and $2^{\beta-j}C\leqslant B\leqslant p^{n-w(i)}$, we may apply the condition $H_{\text{sm}}(\delta)$ to see that the sum of the first three summands is at most
\begin{align*}
&\ll p^r\left(\frac{p^{n-w}}{2^{\beta-1}C}\right)^k(2^{\beta-1}C)^{\ell}(\log p^{n-w})^{\delta}+\sum_{j=1}^{\beta-1}p^r\left(\frac{p^{n-w}}{2^{\beta-j}C}\right)^k(2^{\beta-j}C)^{\ell}(\log p^{n-w})^{\delta}\\
&\ll p^r\left(\frac{p^{n-w}}B\right)^kB^{\ell}(\log p^{n-w})^{\delta}(1+\delta_{1/2}\beta),
\end{align*}
recalling that $\ell\geqslant k$, with $\ell>k$ unless $k=\ell=\frac12$. Finally, we choose $\beta$ so that $2^{\beta}\asymp B^{1/2}$; then $\beta\asymp\log B\ll\log p^{n-w}$. We thus obtain
\begin{align*}
\sum_{M<m\leqslant M+B}e\left(\frac{f(m)}{p^n}\right)
&\ll p^r\left(\frac{p^{n-w}}B\right)^kB^{\ell}(\log p^{n-w})^{\delta+\delta_{1/2}}+B^{1/2}\\
&\ll p^r\left(\frac{p^{n-w}}B\right)^kB^{\ell}(\log p^{n-w})^{\delta+\delta_{1/2}},
\end{align*}
as desired, since the first term dominates in light of the condition $H(\delta)^{sq}$. This shows that $\tau$ satisfies $H(\delta+\delta_{1/2})^{sq}$, proving the implication $H_{\text{sm}}(\delta)^{sq}\implies H(\delta+\delta_{1/2})^{sq}$.

Suppose that $H(\delta)$ holds. Let $h\in C^1_0(\mathbb{R})$ be arbitrary, and let $f\in\mathbf{C}_i$, $n\geqslant n_0(i)$, $M\in\mathbb{Z}$, and $0<B\leqslant p^{n-w(i)}$ be given. Fix a $t_0\in\mathbb{R}$, and let
\[ \tilde{S}(t)=\begin{cases}
\sum_{t_0B\leqslant m\leqslant t}e\big(f(m)/p^n\big), &t>t_0B,\\
0, &t=t_0B,\\
-\sum_{t\leqslant m<t_0B}e\big(f(m)/p^n\big), &t<t_0B.
\end{cases} \]
We can break the sum defining $\tilde{S}(t)$ into at most $|t-t_0B|/B+1$ blocks of size at most $B$. Using the condition $H(\delta)$ to estimate each of the blocks, we find that
\[ \tilde{S}(t)\ll \left(\frac{|t-t_0B|}B+1\right)\cdot p^r\left(\frac{p^{n-w}}B\right)^kB^{\ell}(\log p^{n-w})^{\delta}. \]
Using summation by parts, we estimate
\begin{align*}
\sum_{m\in\mathbb{Z}}&e\left(\frac{f(m)}{p^n}\right)h\left(\frac mB\right)\\
&=\int_{\mathbb{R}}h\left(\frac tB\right)\,\text{d}\tilde{S}(t)=-\frac1B\int_{\mathbb{R}}\tilde{S}(t)h'\left(\frac tB\right)\,\text{d}t\\
&\ll\frac1B\int_{\mathbb{R}}\left(\frac{|t-t_0B|}B+1\right)\left|h'\left(\frac tB\right)\right|\,\text{d}t\cdot p^r\left(\frac{p^{n-w}}B\right)^kB^{\ell}(\log p^{n-w})^{\delta}\\
&=\int_{\mathbb{R}}\big(|t-t_0|+1\big)|h'(t)|\,\text{d}t\cdot p^r\left(\frac{p^{n-w}}B\right)^kB^{\ell}(\log p^{n-w})^{\delta}.
\end{align*}
This estimate is valid, with a uniform implied constant, for every $t_0\in\mathbb{R}$. Taking the infimum of the right-hand side over all $t_0\in\mathbb{R}$, we find that $H_{\text{sm}}^{\sharp}(\delta)$ holds. This proves that $H(\delta)\implies H^{\sharp}_{\text{sm}}(\delta)$ and completes the entire proof, since $H^{\sharp}_{\text{sm}}(\delta)\implies H_{\text{sm}}(\delta)$ is trivial.
\end{proof}

\section{\texorpdfstring{$B$-process}{B-process}}
\label{BprocessSection}

We are now ready for the proof of the $B$-process, which relies on Poisson summation to replace an exponential sum with a short dual sum, and on the method of stationary phase and the implicit function theorem to evaluate the dual sum. Although historical precedent would have us presenting the $A$-process first, we find that this order of exposition allows us to obtain tighter estimates.

The following lemma is a version of the $p$-adic analogue of the method of stationary phase and the starting point for the analysis of the Fourier transform $\hat{\mathbf{e}}_{f}(s)$ (defined below in \eqref{EquationFourierTransform}). A variant of this method (as well as of Lemma~\ref{Gauss}) can be found in \cite[sections 3.5 and 12.3]{IwaniecKowalski}, but we include it for completeness as Lemma~\ref{StatPhase} and fine-tune the statement and proof to our particular situation.

\begin{lemma}[Method of stationary phase]
\label{StatPhase}
Let $p$ be a prime, let $f\in\mathbb{Q}^{\times}_p\mathbf{I}(\mathbb{Z}_p)$ be such that $f'\in(\mathbb{Z}_p+p^{\mu}t\mathbf{I}_0(\mathbb{Z}_p))$ for some $\mu\in\mathbb{N}_0$, and let $n,j\in\mathbb{N}$ be such that $j\leqslant n-1$ and
\[ 2(n-j)+\mu\geqslant n+\iota'(2). \] Then
\[ \sum_{m\bmod p^n}e\left(\frac{f(m)}{p^n}\right)=\sum_{\substack{m\bmod p^n\\f'(m)\equiv 0\bmod{p^j}}}e\left(\frac{f(m)}{p^n}\right). \]
\end{lemma}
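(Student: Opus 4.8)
The plan is to collapse the complete sum modulo $p^n$ to a sum over residues modulo $p^{n-j}$ by writing $m=m_0+p^{n-j}\ell$, Taylor-expanding the phase around each $m_0$, discarding the higher-order terms, and finishing with orthogonality of additive characters modulo $p^j$.

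First I would split, which is legitimate since $j\leqslant n-1$, the variable $m$ as $m=m_0+p^{n-j}\ell$ with $m_0$ ranging over $\mathbb{Z}/p^{n-j}\mathbb{Z}$ and $\ell$ over $\mathbb{Z}/p^j\mathbb{Z}$. Since $f'\in\mathbb{Z}_p+p^{\mu}t\mathbf{I}_0(\mathbb{Z}_p)\subseteq\mathbf{I}_0(\mathbb{Z}_p)$, we have $f'(m_0)\in\mathbb{Z}_p$, so $f'(m_0)\bmod p^j$ makes sense and the exponentials below are well-defined ($e$ being $\mathbb{Z}_p$-periodic on $\mathbb{Q}_p$). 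By the Taylor expansion \eqref{TaylorSeriesValues} at $m_0$, with argument $p^{n-j}\ell\in\mathbb{Z}_p$,
\[ f\big(m_0+p^{n-j}\ell\big)=f(m_0)+f'(m_0)p^{n-j}\ell+\sum_{r\geqslant 2}\frac{f^{(r)}(m_0)}{r!}\,p^{(n-j)r}\ell^r. \]

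The heart of the matter, and the step I expect to require the most care, is showing that every term with $r\geqslant 2$ lies in $p^n\mathbb{Z}_p$. Writing $f'(t)=\sum_k b_kt^k$ with $b_k\in p^{\mu}\mathbb{Z}_p$ for $k\geqslant 1$, and using $f^{(r)}=(f')^{(r-1)}$ together with $f^{(r)}(t)/r!=\tfrac1r\sum_{k\geqslant r-1}\binom{k}{r-1}b_kt^{k-r+1}$, one finds $\ord_p\big(f^{(r)}(m_0)/r!\big)\geqslant\mu-\ord_p(r)$ for $r\geqslant 2$. So the $r$-th term has $p$-adic valuation at least $(n-j)r+\mu-\ord_p(r)$, and the task is to verify $(n-j)r+\mu-n\geqslant\ord_p(r)$ for every $r\geqslant 2$. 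Putting $d=n-j\geqslant 1$ and writing $(n-j)r+\mu-n=d(r-2)+(2d+\mu-n)$, the hypothesis $2(n-j)+\mu\geqslant n+\iota'(2)$ reduces this to $d(r-2)+\iota'(2)\geqslant\ord_p(r)$: for $r=2$ this is the equality $\iota'(2)=\ord_p2$, while for $r\geqslant 3$, since $d\geqslant 1$, it follows from $(r-2)+\iota'(2)\geqslant\ord_p(r)$, which I would deduce from $\ord_p(r)\leqslant\ord_p(r!)\leqslant(r-1)/(p-1)$ by separating $p=2$ (where $\iota'(2)=1$ and $\ord_2r\leqslant r-1$) from $p\geqslant3$ (where $(r-1)/(p-1)\leqslant r-2$). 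The only genuine subtlety is the prime $p=2$, and it is exactly offset by the $\iota'(2)$ appearing in the hypothesis.

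Having discarded the higher terms, $e\big(f(m_0+p^{n-j}\ell)/p^n\big)=e\big(f(m_0)/p^n\big)\,e\big(f'(m_0)\ell/p^j\big)$, and summing over $\ell$ and invoking orthogonality yields
\[ \sum_{m\bmod p^n}e\left(\frac{f(m)}{p^n}\right)=\sum_{m_0\bmod p^{n-j}}e\left(\frac{f(m_0)}{p^n}\right)\sum_{\ell\bmod p^j}e\left(\frac{f'(m_0)\ell}{p^j}\right)=p^j\sum_{\substack{m_0\bmod p^{n-j}\\ f'(m_0)\equiv0\,(p^j)}}e\left(\frac{f(m_0)}{p^n}\right). \]
To identify this with the right-hand side of the lemma, I would note that the condition $f'(m)\equiv0\pmod{p^j}$ depends only on $m\bmod p^{n-j}$: indeed $f'(m_0+p^{n-j}\ell)-f'(m_0)=\sum_{r\geqslant1}\frac{(f')^{(r)}(m_0)}{r!}p^{(n-j)r}\ell^r$ has valuation at least $(n-j)+\mu\geqslant j$, once more by $2(n-j)+\mu\geqslant n$. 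Consequently each $m_0$ with $f'(m_0)\equiv0\,(p^j)$ has all $p^j$ of its lifts satisfying the constraint, with $e(f'(m_0)\ell/p^j)=1$ for them, so $\sum_{m\bmod p^n,\,f'(m)\equiv0\,(p^j)}e(f(m)/p^n)=p^j\sum_{m_0\bmod p^{n-j},\,f'(m_0)\equiv0\,(p^j)}e(f(m_0)/p^n)$, matching exactly what the left-hand side reduced to, and completing the proof.
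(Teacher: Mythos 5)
Your proof is correct and follows essentially the same approach as the paper's: Taylor-expand the phase over a shift by $p^{n-j}\cdot(\text{something})$, check via the hypothesis $2(n-j)+\mu\geqslant n+\iota'(2)$ that all terms of order $r\geqslant 2$ lie in $p^n\mathbb{Z}_p$, and then apply orthogonality of additive characters modulo $p^j$. The only bookkeeping difference is that the paper introduces the shift by averaging, writing $S=p^{-j}\sum_{m\bmod p^n}\sum_{k\bmod p^j}e\big(f(m+p^{n-j}k)/p^n\big)$ while keeping $m$ ranging over all of $\mathbb{Z}/p^n\mathbb{Z}$, which makes orthogonality land directly on the claimed right-hand side; your decomposition $m=m_0+p^{n-j}\ell$ instead yields a sum over $m_0\bmod p^{n-j}$, which is why you need the final (correct) paragraph verifying that the condition $f'(m)\equiv 0\pmod{p^j}$ depends only on $m\bmod p^{n-j}$ before you can re-lift. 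Both steps use the same hypothesis, so the content is identical.
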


\begin{proof}
We can write
\[ S=\sum_{m\bmod{p^n}}e\left(\frac{f(m)}{p^n}\right)=p^{-j}\sum_{m\bmod{p^n}}\sum_{k\bmod{p^j}}e\left(\frac{f(m+p^{n-j}k)}{p^n}\right).\]
We can use Taylor's expansion \eqref{TaylorSeriesValues} to write
\[ f(m+p^{n-j}k)=f(m)+p^{n-j}f'(m)k+\sum_{r=2}^{\infty}\frac1{r!}p^{r(n-j)}f^{(r)}(m)k^r. \]
We claim that, under our conditions, all terms in the rightmost sum are divisible by $p^n$. Indeed, writing $f'(t)=b_0+\sum_{k=1}^{\infty}p^{\mu}b_kt^k$, we have $f^{(r)}(t)=p^{\mu}\sum_{k=0}^{\infty}b_{k+r-1}(k+r-1)_{r-1}t^k$, so that
\[ \text{ord}_p\left(\frac1{r!}p^{r(n-j)}f^{(r)}(m)k^r\right)\geqslant\big(2(n-j)+\mu\big)+\big((r-2)(n-j)-\text{ord}_pr\big). \]
That the right-hand side is divisible by $p^n$ is now immediate for $r=2$ and $r=3$; for $r\geqslant 4$, the claim follows from $p^{r-2}\geqslant 2^{r-2}\geqslant r$.

It follows that
\[ S=p^{-j}\sum_{m\bmod{p^n}}e\left(\frac{f(m)}{p^n}\right)\sum_{k\bmod{p^j}}e\left(\frac{f'(m)k}{p^j}\right). \]
The inner sum equals $p^j$ if $f'(m)\equiv 0\pmod{p^j}$ and vanishes otherwise. This gives the desired equality.
\end{proof}

The method of stationary phase, in some variation of that presented in Lemma~\ref{StatPhase}, goes back at least to Sali\'e \cite{SalieSP}. A simple instance of this method is the classical evaluation of the Gaussian sum, which we record for reference. We will in fact only use the most elementary case $n\in\{0,1\}$ of this Lemma ($n\in\{2,3\}$ for $p=2$).

\begin{lemma}[Gauss]
\label{Gauss}
For a prime $p$, $n\in\mathbb{N}$, and $a\in\mathbb{Z}$ with $p\nmid a$, let
\[ \tau_a(p^n)=\sum_{m\bmod{p^n}}e\left(\frac{am^2}{p^n}\right). \]
Then
\[ \tau_a(p^n)=p^{\frac{n+\iota'(2)}2}\epsilon(a,p^n), \]
where $\epsilon(a,p^n)$ is a unit factor given explicitly as
\[ \epsilon(a,p^n)=
\begin{cases}
1, & p\neq 2,\,\, 2\mid n,\\
\big(\frac ap\big), &p\equiv 1\bmod 4,\,\,2\nmid n,\\
\big(\frac ap\big)i, &p\equiv 3\bmod 4,\,\,2\nmid n;
\end{cases}
\quad
\epsilon(a,2^n)=
\begin{cases}
0, &p=2,\,\,n=1,\\
\frac{1+i^a}{\sqrt2}, &2\mid n,\,\,n\geqslant 2,\\
\big(\frac 2a\big)\frac{1+i^a}{\sqrt2}, &2\nmid n,\,\,n\geqslant 3.
\end{cases}
\]
\end{lemma}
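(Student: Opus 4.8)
The plan is to run a single descent step in $n$, grounded in the stationary-phase Lemma~\ref{StatPhase}, which collapses $\tau_a(p^n)$ to one of a short list of base cases, and then to evaluate those base cases by hand; the only ingredient that is not bookkeeping is the classical determination of the sign of the quadratic Gauss sum for odd $p$.

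\textbf{Descent step.} Apply Lemma~\ref{StatPhase} to $f(t)=at^2$, which lies in $\mathbb{Q}^{\times}_p\mathbf{I}(\mathbb{Z}_p)$ (since $pat^2\in pt\mathbf{I}_0(\mathbb{Z}_p)$) and has $f'(t)=2at\in\mathbb{Z}_p+p^{\mu}t\mathbf{I}_0(\mathbb{Z}_p)$ with $\mu=\ord_p(2a)=\iota'(2)$. With $j=\lfloor n/2\rfloor$ (so $j\leqslant n-1$ once $n\geqslant 2$), the hypothesis $2(n-j)+\mu\geqslant n+\iota'(2)$ reduces to $j\leqslant n/2$ and holds. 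The stationary-phase condition $f'(m)\equiv 0\pmod{p^j}$ becomes $m\equiv 0\pmod{p^{j-\mu}}$; writing $m=p^{j-\mu}m'$ and recalling that $e(am'^2/p^{\,n-2j+2\mu})$ is periodic in $m'$ with period $p^{\,n-2j+2\mu}$, one gets $\tau_a(p^n)=p^{\,j-\mu}\,\tau_a\big(p^{\,n-2j+2\mu}\big)$. For odd $p$ ($\mu=0$) this reads $\tau_a(p^n)=p^{n/2}\tau_a(1)$ for $n$ even and $\tau_a(p^n)=p^{(n-1)/2}\tau_a(p)$ for $n$ odd; for $p=2$ ($\mu=1$) it reads $\tau_a(2^n)=2^{\,n/2-1}\tau_a(4)$ for even $n\geqslant 2$ and $\tau_a(2^n)=2^{(n-3)/2}\tau_a(8)$ for odd $n\geqslant 3$. (One could equally obtain these recursions by the elementary splitting $m=x+p^{n-1}y$, resp.\ $m=x+2^{n-2}y$ with $y\bmod 4$; the content is the same.)

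\textbf{Base cases.} Here $\tau_a(1)=1$; for $p=2$, $\tau_a(2)=1+(-1)^a=0$, $\tau_a(4)=2+2e(a/4)=2(1+i^a)$, and $\tau_a(8)=2+4e(a/8)+2(-1)^a=4e(a/8)$, which one rewrites as $2^{3/2}\left(\frac2a\right)(1+i^a)$ by inspecting the four classes $a\bmod 8$. For odd $p$ and $n=1$, the standard substitution (grouping the sum over $m^2$ by residues with weight $1+\left(\frac{\cdot}{p}\right)$ and using $\sum_t e(at/p)=0$) gives $\tau_a(p)=\left(\frac ap\right)\tau_1(p)$, and the elementary identities $|\tau_1(p)|^2=p$ and $\tau_1(p)^2=\left(\frac{-1}{p}\right)p$ pin $\tau_1(p)$ down to $\pm\sqrt p$ (for $p\equiv 1\bmod 4$) or $\pm i\sqrt p$ (for $p\equiv 3\bmod 4$). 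Substituting these back into the descent relations and collecting the powers of $p$ yields exactly the asserted formulas: $\tau_a(p^n)=p^{n/2}$ (so $\epsilon=1$) for odd $p$ and $n$ even; $\tau_a(p^n)=p^{n/2}\left(\frac ap\right)\cdot\{1\text{ or }i\}$ for odd $p$ and $n$ odd; $\tau_a(2^n)=2^{(n+1)/2}\frac{1+i^a}{\sqrt2}$ for $n$ even; and $\tau_a(2^n)=2^{(n+1)/2}\left(\frac2a\right)\frac{1+i^a}{\sqrt2}$ for odd $n\geqslant 3$, with $\epsilon(a,2)=0$.

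\textbf{Main obstacle.} Everything above is finite computation except the positivity of the sign in $\tau_1(p)=+\sqrt p$ resp.\ $+i\sqrt p$: its modulus and its square are immediate, but the sign itself is Gauss's classical theorem, for which we simply refer to a standard source such as \cite[\S3.5]{IwaniecKowalski}.
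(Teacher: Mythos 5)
Your proposal is correct, and it takes a genuinely different route from the paper: the paper does not prove Lemma~\ref{Gauss} at all, but simply cites it (adapting notation) from Berndt–Evans–Williams, \emph{Gauss and Jacobi Sums}, Theorems 1.5.1–1.5.2 and Proposition 1.5.3. You instead give a self-contained derivation driven by the paper's own Lemma~\ref{StatPhase}: taking $f(t)=at^2$, $\mu=\iota'(2)$, $j=\lfloor n/2\rfloor$, the stationary-phase reduction collapses $\tau_a(p^n)$ to $p^{j-\mu}\tau_a(p^{n-2j+2\mu})$, and you then evaluate the finitely many base cases $\tau_a(1)$, $\tau_a(p)$, $\tau_a(2)$, $\tau_a(4)$, $\tau_a(8)$. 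I checked the details: the class membership $at^2\in\mathbb{Q}_p^\times\mathbf{I}(\mathbb{Z}_p)$ and the hypotheses of Lemma~\ref{StatPhase} are verified correctly, the substitution $m=p^{j-\mu}m'$ and the period count producing the factor $p^{j-\mu}$ are right (using $j\geqslant\mu$, which holds once $n\geqslant 2$; $n=1$ is a base case), and the base-case computations (including matching $4e(a/8)$ against $2^{3/2}(\tfrac2a)(1+i^a)$ by the four classes $a\bmod 8$) all check out, as does the power-of-$p$ bookkeeping against $p^{(n+\iota'(2))/2}$. You correctly isolate the only genuinely nontrivial ingredient — the sign of $\tau_1(p)$ for odd $p$ — and refer it out, which is appropriate since that is Gauss's theorem and not something Lemma~\ref{StatPhase} can give. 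The upside of your route is that it showcases Lemma~\ref{StatPhase} as a descent mechanism and makes the result nearly self-contained within the paper's toolkit; the downside is that it is longer than a citation and still has to import the sign. Both are legitimate; the paper simply chose to outsource the whole statement.
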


\begin{proof}
This is adapted to our notation from \cite[Theorems 1.5.1 and 1.5.2 and Proposition 1.5.3 on page 26]{GaussJacobiSums}.
\end{proof}

In the following lemma, we develop the $p$-adic implicit function theorem which we will use to characterize the critical points in the exponential sum \eqref{EquationFourierTransform}.

\begin{lemma}[Implicit function theorem]
\label{ImplicitFunctionTheorem}
Let $f\in\mathbf{F}(w,y,\kappa,\lambda,u,\omega,\omega')$, and assume that
\[ u>\kappa-\lfloor\lambda\rfloor+\iota',\quad \tilde{\lambda}=\min(\kappa-\rho_p(y),\lambda)>0. \]
Let $\tilde{g}_0=f'(0)p^{-w}-\omega'\in\mathbb{Z}_p$. Then there is a power series $\tilde{f}\in\mathbf{I}^n_0[\tilde{\lambda}](\mathbb{Z}_p)$ such that
\begin{equation}
\label{ImplicitFunctionClasses}
\tilde{f}(t)=p^{-\iota-\kappa}\omega^{-1}\big(1+p^{\iota'+\kappa}t\big)^{-1/y}-p^{-\iota-\kappa}\omega^{-1}+p^{u+\lfloor\lambda\rfloor-\kappa-\iota'}\tilde{g}(t)
\end{equation}
for some $\tilde{g}\in\mathbf{I}^n_0[\tilde{\lambda}](\mathbb{Z}_p)$ and such that
\begin{equation}
\label{ImplicitFunctionDefiningProperty}
f'\big(\tilde{f}(t)\big)p^{-w}=\tilde{g}_0+\omega'\big(1+p^{\iota'+\kappa}t\big).
\end{equation}

Moreover, for $j\geqslant\iota'+\kappa$ and $s'\in\mathbb{Z}_p$, the congruence
\[ f'(m)p^{-w}\equiv s'\pmod{p^j} \]
has solutions $m\in\mathbb{Z}_p$ if and only if $s'\equiv \tilde{g}_0+\omega'\pmod{p^{\iota'+\kappa}}$. In this case, writing $j'=j-\iota'-\kappa$ and $s'=\tilde{g}_0+\omega'\big(1+p^{\iota'+\kappa}t\big)$ for some $t\in\mathbb{Z}_p$ unique modulo $p^{j'}$, the above congruence holds if and only if
\[ m\equiv\tilde{f}(t)\pmod{p^{j'}}. \]
\end{lemma}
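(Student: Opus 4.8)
The plan is to construct $\tilde{f}$ as the compositional inverse, in a suitable class, of the function $m\mapsto f'(m)p^{-w}$, regarded as a perturbation of the ``model'' power function $m\mapsto\omega'(1+p^{\iota+\kappa}\omega m)^{-y}$, and then read off the congruence statement from the isometry properties of that model. Concretely, write $\psi(m)=f'(m)p^{-w}=\omega'(1+p^{\iota+\kappa}\omega m)^{-y}+\gamma_0+p^u g(m)$ with $g\in\mathbf{I}_0[\lambda](\mathbb{Z}_p)$, and note $\gamma_0=\tilde{g}_0+\omega'-\psi(0)+\psi(0)$; more precisely $\tilde{g}_0=\psi(0)-\omega'=\gamma_0+p^u g(0)$, so that the ``constant discrepancy'' is absorbed correctly. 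We want $\tilde{f}\in\mathbf{I}^n_0[\tilde\lambda](\mathbb{Z}_p)$ with $\psi(\tilde{f}(t))=\tilde{g}_0+\omega'(1+p^{\iota'+\kappa}t)$. Absent the $p^u g$ term this is exactly solved by the explicit formula $\tilde f_{\mathrm{model}}(t)=p^{-\iota-\kappa}\omega^{-1}\big((1+p^{\iota'+\kappa}t)^{-1/y}-1\big)$, using that $\pi^{-1/y}_0:B_{r_pp^{-\iota'}}\to B_{r_pp^{-\iota}}$ inverts $\pi^{-y}_0$ (section~\ref{Preliminaries}), and this lies in $\mathbf{I}^n_0[\kappa-\rho_p(y)]\subseteq\mathbf{I}^n_0[\tilde\lambda](\mathbb{Z}_p)$.

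The substantive step is a contraction/successive-approximation argument to handle the $p^u g$ perturbation. I would set up an iteration $\tilde f_0=\tilde f_{\mathrm{model}}$ and, given $\tilde f_\nu$ with $\psi(\tilde f_\nu(t))=\tilde g_0+\omega'(1+p^{\iota'+\kappa}t)+p^{?}(\text{error})$, correct it by composing with the inverse of the model and re-expanding; each time, Lemma~\ref{PowerSeriesTaylorExpansion} lets me write $\psi(\tilde f_\nu(t)+p^{\bullet}\delta(t))=\psi(\tilde f_\nu(t))+p^{\bullet}(\text{derivative})\cdot\delta(t)+\dots$, and Lemma~\ref{RaiseToPower} controls what happens to the power-function part when the argument is perturbed by a high power of $p$. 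The gain at each step is governed exactly by the quantity $u+\lfloor\lambda\rfloor-\kappa-\iota'$, which is positive by hypothesis ($u>\kappa-\lfloor\lambda\rfloor+\iota'$); this is why that inequality appears, and why the final error term in \eqref{ImplicitFunctionClasses} carries the factor $p^{u+\lfloor\lambda\rfloor-\kappa-\iota'}$. Convergence of the $\tilde f_\nu$ in $\mathbf{I}^n_0[\tilde\lambda](\mathbb{Z}_p)$ (coefficientwise $p$-adically, uniformly) then yields $\tilde f$; tracking the classes through the iteration using the composition rules for $\mathbf{I}_0[\lambda]$, $\mathbf{I}^n_0[\lambda]$, and $\mathbf{I}^1_\kappa[\lambda]$ collected in section~\ref{Preliminaries} gives $\tilde g\in\mathbf{I}^n_0[\tilde\lambda](\mathbb{Z}_p)$ and the stated shape \eqref{ImplicitFunctionClasses}, while \eqref{ImplicitFunctionDefiningProperty} holds by construction as an identity of values on $\mathbb{Z}_p$, hence of power series.

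For the congruence characterization: given $j\geqslant\iota'+\kappa$ and $s'\in\mathbb{Z}_p$, the equation $\psi(m)p^{-w}\cdot p^w\equiv s'\pmod{p^j}$, i.e. $\psi(m)\equiv s'\pmod{p^j}$, forces $s'\equiv\psi(m)\equiv\tilde g_0+\omega'\pmod{p^{\iota'+\kappa}}$ because $\omega'(1+p^{\iota+\kappa}\omega m)^{-y}\equiv\omega'\pmod{p^{\iota'+\kappa}}$ (as $\pi^{-y}_{[\kappa+\iota]}\in\mathbf{I}^1_{\kappa+\iota'}(\mathbb{Z}_p)$) and $p^u g(m)\equiv p^u g(0)\pmod{p^{u+\lambda}}$ with $u+\lambda\geqslant\iota'+\kappa$ — wait, this last point needs $u+\lfloor\lambda\rfloor\geqslant\kappa+\iota'$, which is implied by the hypothesis $u>\kappa-\lfloor\lambda\rfloor+\iota'$; good. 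Conversely, when $s'=\tilde g_0+\omega'(1+p^{\iota'+\kappa}t)$, apply \eqref{ImplicitFunctionDefiningProperty}: $\psi(\tilde f(t))=s'$ exactly, so $m=\tilde f(t)$ is an honest solution, and any other solution $m$ satisfies $\psi(m)\equiv\psi(\tilde f(t))\pmod{p^j}$. To conclude $m\equiv\tilde f(t)\pmod{p^{j'}}$ I would invoke Lemma~\ref{LocalInjectionLemma}: the map $m\mapsto\psi(m)$ has linear coefficient of $p$-adic valuation exactly $\iota'+\kappa$ (coming from the model term, the $\gamma_0$ and $p^ug$ pieces contributing nothing resp. strictly more), and $M_1$ of the associated $f_1$ is dominated by that linear term — here I use $u+\lfloor\lambda\rfloor>\kappa+\iota'$ once more to dominate the $p^u g'$ contribution, and the strict inequality in the hypothesis is what makes this work — so $\psi(m)\equiv\psi(\tilde f(t))\pmod{p^{(\iota'+\kappa)+j'}}$ is equivalent to $m\equiv\tilde f(t)\pmod{p^{j'}}$, which is the claim since $j=j'+\iota'+\kappa$.

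The main obstacle is the iterative construction of $\tilde f$: one must run the successive-approximation scheme while simultaneously (i) keeping every iterate inside $\mathbf{I}^n_0[\tilde\lambda](\mathbb{Z}_p)$, (ii) ensuring numerical substitution remains licit at each composition (via Lemma~\ref{NumericalSubstitution}, using $r_a\geqslant p^{\tilde\lambda}$ for these classes and the growth-modulus bounds), and (iii) quantifying the gain per step as precisely $p^{u+\lfloor\lambda\rfloor-\kappa-\iota'}$ rather than something lossier — this bookkeeping is exactly what Lemmas~\ref{RaiseToPower} and \ref{PowerSeriesTaylorExpansion} were built for, but assembling them into a clean convergent iteration, and checking the borderline valuation estimates that make the hypothesis $u>\kappa-\lfloor\lambda\rfloor+\iota'$ (strict) both necessary and sufficient, is the delicate part.
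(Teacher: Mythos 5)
Your plan is correct and coincides with the paper's proof: the paper constructs $\tilde f$ by precisely the successive-approximation iteration you sketch (see~\eqref{Definitionfk}), starting from your $\tilde f_{\mathrm{model}}=\tilde f_0$ and gaining a factor $p^{u'-\iota'}=p^{u+\lfloor\lambda\rfloor-\kappa-\iota'}$ per step via Lemmas~\ref{RaiseToPower} and~\ref{PowerSeriesTaylorExpansion}. The congruence characterization is likewise derived from $\pi^{-y}_{[\iota+\kappa]}\in\mathbf{I}^1_{\iota'+\kappa}(\mathbb{Z}_p)$ together with Lemma~\ref{LocalInjectionLemma} applied to $f'(t)p^{-w}$, exactly as you propose.
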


\begin{proof}
Recall that
\[ f'(t)p^{-w}=\omega'\big(1+p^{\iota+\kappa}\omega t\big)^{-y}+\gamma_0+p^ug(t), \]
where $g\in\mathbf{I}_0[\lambda](\mathbb{Z}_p)$. We can write
\[ \gamma_0+p^ug(t)=\gamma_0+p^ug(0)+p^{u+\lfloor\lambda\rfloor-\kappa}p^{\kappa}tg_1(t)=\tilde{g}_0+p^{u'+\kappa}tg_1(t), \]
where $u'=u+\lfloor\lambda\rfloor-\kappa>\iota'$ and $g_1\in\mathbf{I}_0[\lambda](\mathbb{Z}_p)$. We will now construct a power series $\tilde{f}$ such that
\begin{align*}
f'(\tilde{f}(t))p^{-w}
&=\tilde{g}_0+\omega'\big(1+p^{\iota+\kappa}\omega\tilde{f}(t)\big)^{-y}+p^{u'+\kappa}\tilde{f}(t)g_1(\tilde{f}(t))\\
&=\tilde{g}_0+\omega'\big(1+p^{\iota'+\kappa}t\big),
\end{align*}
with all numerical substitutions allowed for $t\in\mathbb{Z}_p$.

Let $\omega''=\omega'{}^{-1}$. Define a sequence of power series $\tilde{f}_k$ as follows:
\begin{equation}
\label{Definitionfk}
\begin{aligned}
\tilde{f}_0(t)&=\frac{\big(1+p^{\iota'+\kappa}t\big)^{-1/y}-1}{p^{\iota+\kappa}\omega},\\
\tilde{f}_{k+1}(t)&=\frac{\left(1+p^{\iota'+\kappa}t-p^{u'+\kappa}\omega''\tilde{f}_k(t)g_1(\tilde{f}_k(t))\right)^{-1/y}-1}{p^{\iota+\kappa}\omega}\quad(k\geqslant 0).
\end{aligned}
\end{equation}

Let $\rho_k=(k+1)(u'-\iota')$. We claim that $\tilde{f}_k$ is a sequence of power series with
\[ \tilde{f}_0\in\mathbf{I}^n_0[\kappa-\rho_p(y)](\mathbb{Z}_p)\quad\text{and}\quad\tilde{f}_k\in\mathbf{I}^n_0[\tilde{\lambda}](\mathbb{Z}_p)\,\,(k\geqslant 1) \]
such that
\begin{equation}
\label{EquationSuccessiveApproximations}
\tilde{f}_{k+1}=\tilde{f}_k+p^{\rho_k}\tilde{F}_k
\end{equation}
for some $\tilde{F}_k\in\mathbf{I}^n_0[\tilde{\lambda}](\mathbb{Z}_p)$.

We prove this claim by induction on $k$. For $k=0$, $\pi^{-1/y}_{[\iota'+\kappa]}\in\mathbf{I}^1_{\iota+\kappa}[\kappa-\rho_p(y)](\mathbb{Z}_p)$ implies that $\tilde{f}_0\in\mathbf{I}^n_0[\kappa-\rho_p(y)](\mathbb{Z}_p)$. Then $g_1(\tilde{f}_0(t))\in\mathbf{I}_0[\tilde{\lambda}](\mathbb{Z}_p)$ and $\omega''\tilde{f}_0(t)g_1(\tilde{f}_0(t))\in\mathbf{I}^n_0[\tilde{\lambda}](\mathbb{Z}_p)$. Applying Lemma~\ref{RaiseToPower},
\begin{align*}
\Big(1+p^{\iota'+\kappa}t-p^{(u'+\kappa-\iota')+\iota'}\cdot{}&\omega''\tilde{f}_0(t)g_1(\tilde{f}_0(t))\Big)^{-1/y}\\
&=\big(1+p^{\iota'+\kappa}t\big)^{-1/y}+p^{u'+\kappa-\iota'+\iota}\tilde{b}(t)
\end{align*}
for some $\tilde{b}(t)\in\mathbf{I}^n_0[\tilde{\lambda}](\mathbb{Z}_p)$. We see that we can take $\tilde{F}_0(t)=\omega^{-1}\tilde{b}(t)$ in \eqref{EquationSuccessiveApproximations}.

Assume that \eqref{EquationSuccessiveApproximations} holds for some $k\in\mathbb{N}_0$; then clearly $\tilde{f}_{k+1}\in\mathbf{I}^n_0[\tilde{\lambda}](\mathbb{Z}_p)$. Moreover, according to Lemma~\ref{PowerSeriesTaylorExpansion}, we can write
\[ g_1(\tilde{f}_{k+1}(t))=g_1\big(\tilde{f}_k(t)+p^{\rho_k}\tilde{F}_k(t)\big)=g_1(\tilde{f}_k(t))+p^{\rho_k}g_2(t) \]
for some $g_2\in\mathbf{I}^n_{0,1}[\tilde{\lambda}](\mathbb{Z}_p)$. We can rearrange
\begin{align*}
\Big(1+p^{\iota'+\kappa}t-{}&p^{u'+\kappa}\omega''\tilde{f}_{k+1}(t)g_1(\tilde{f}_{k+1}(t))\Big)^{-1/y}\\
&=\Big[\big(1+p^{\iota'+\kappa}t-p^{u'+\kappa}\omega''\tilde{f}_k(t)g_1(\tilde{f}_k(t))\big)\\
&\qquad\qquad -p^{(u'+\kappa+\rho_k-\iota')+\iota'}\omega''\big(\tilde{F}_k(t)g_1(\tilde{f}_k(t))+\tilde{f}_{k+1}(t)g_2(t)\big)\Big]^{-1/y}
\end{align*}
Since $g_1(\tilde{f}_k(t))\in\mathbf{I}_0[\tilde{\lambda}](\mathbb{Z}_p)$, we have that
\begin{gather*}
1+p^{\iota'+\kappa}t-p^{u'+\kappa}\omega''\tilde{f}_k(t)g_1(\tilde{f}_k(t))\in\mathbf{I}^1_{\iota'+\kappa}[\tilde{\lambda}](\mathbb{Z}_p),\\
\tilde{F}_k(t)g_1(\tilde{f}_k(t))+\tilde{f}_{k+1}(t)g_2(t)\in\mathbf{I}^n_0[\tilde{\lambda}](\mathbb{Z}_p).
\end{gather*}
Applying Lemma~\ref{RaiseToPower}, we conclude that
\begin{align*}
&\left(1+p^{\iota'+\kappa}t-p^{u'+\kappa}\omega''\tilde{f}_{k+1}(t)g_1(\tilde{f}_{k+1}(t))\right)^{-1/y}\\
&\qquad\qquad=\Big(1+p^{\iota'+\kappa}t-p^{u'+\kappa}\omega''\tilde{f}_k(t)g_1(\tilde{f}_k(t))\Big)^{-1/y}+p^{u'+\kappa+\rho_k-\iota'+\iota}\tilde{b}_k(t)
\end{align*}
for some $\tilde{b}_k(t)\in\mathbf{I}^n_0[\tilde{\lambda}](\mathbb{Z}_p)$. We see that we can take $\tilde{F}_{k+1}(t)=\omega^{-1}\tilde{b}_k(t)$ in \eqref{EquationSuccessiveApproximations}, since $\rho_{k+1}=u'-\iota'+\rho_k$. This completes the inductive proof of \eqref{EquationSuccessiveApproximations}.

We now define
\begin{equation}
\label{DefinitiongTilde}
\tilde{g}(t)=\sum_{k=0}^{\infty}p^{\rho_k-\rho_0}\tilde{F}_k(t),\quad \tilde{f}(t)=\tilde{f}_0(t)+p^{u'-\iota'}\tilde{g}(t).
\end{equation}
In light of $u'>\iota'$, it is clear that the series converges and that $\tilde{g}(t),\tilde{f}(t)\in\mathbf{I}^n_0[\tilde{\lambda}](\mathbb{Z}_p)$. Moreover, for $r=|t|_p<p^{\tilde{\lambda}}$, we have \eqref{DefinitiongTilde} also as equalities of values, and $M_r\tilde{f}\doteq r$. We claim that $\tilde{f}$ has all desired properties; it is now immediate that \eqref{ImplicitFunctionClasses} holds.

Define $\check{F}_k=\sum_{\ell=k}^{\infty}p^{\rho_{\ell}-\rho_k}\tilde{F}_{\ell}$. Then
\[ \tilde{f}_k=\tilde{f}-p^{\rho_k}\check{F}_k \]
and $\check{F}_k\in\mathbf{I}^n_0[\tilde{\lambda}](\mathbb{Z}_p)$. The second equation in \eqref{Definitionfk} is equivalent to
\[ 1+p^{\iota+\kappa}\omega\tilde{f}_{k+1}(t)=\left(1+p^{\iota'+\kappa}t-p^{u'+\kappa}\omega''\tilde{f}_k(t)g_1(\tilde{f}_k(t))\right)^{-1/y}. \]
Applying Lemma~\ref{RaiseToPower} and Lemma~\ref{PowerSeriesTaylorExpansion} as above (with $\tilde{f}_k(t)$, $\tilde{f}(t)$, and $-\check{F}_k(t)$ in place of $\tilde{f}_{k+1}(t)$, $\tilde{f}_k(t)$, and $\tilde{F}_k(t)$, respectively), we can re-write the right-hand side of this equality to see that
\begin{align*}
1+{}&p^{\iota+\kappa}\omega\tilde{f}(t)\\
&=1+p^{\iota+\kappa}\omega\tilde{f}_{k+1}(t)+p^{\rho_{k+1}+\iota+\kappa}\omega\check{F}_{k+1}(t)\\
&=\left(1+p^{\iota'+\kappa}t-p^{u'+\kappa}\omega''\tilde{f}(t)g_1(\tilde{f}(t))\right)^{-1/y}+p^{\rho_{k+1}+\iota+\kappa}(\check{b}_k(t)+\omega\check{F}_{k+1}(t))
\end{align*}
for some $\check{b}_k(t)\in\mathbf{I}^n_0[\tilde{\lambda}](\mathbb{Z}_p)$. For $k$ large enough, this is possible only if
\[ 1+p^{\iota+\kappa}\omega\tilde{f}(t)=\left(1+p^{\iota'+\kappa}t-p^{u'+\kappa}\omega''\tilde{f}(t)g_1(\tilde{f}(t))\right)^{-1/y}. \]
This equality of series in $\mathbf{I}^1_{\iota+\kappa}(\mathbb{Z}_p)$ is equivalent to
\begin{gather*}
\left(1+p^{\iota+\kappa}\omega\tilde{f}(t)\right)^{-y}=1+p^{\iota'+\kappa}t-p^{u'+\kappa}\omega''\tilde{f}(t)g_1(\tilde{f}(t)),\\
f'\big(\tilde{f}(t)\big)=p^w\tilde{g}_0+p^w\omega'\big(1+p^{\iota'+\kappa}t\big).
\end{gather*}
According to Lemma~\ref{NumericalSubstitution}, the numerical substitution of $\tilde{f}(t)$ in $f'$ is justified for all $|t|_p<p^{\tilde{\lambda}}$.

We pass to characterizing the solutions to the congruence $f'(m)p^{-w}\equiv s'\pmod{p^j}$, that is,
\[ \tilde{g}_0+\omega'\big(1+p^{\iota+\kappa}\omega m\big)^{-y}+p^{u'+\kappa}mg_1(m)\equiv s'\pmod{p^j}, \]
where $j\geqslant\iota'+\kappa$. Recalling that $\pi^{-y}_{[\iota+\kappa]}\in\mathbf{I}^1_{\iota'+\kappa}(\mathbb{Z}_p)$, it is seen that solutions $m$ exist only if $s'=\tilde{g}_0+\omega'\big(1+p^{\iota'+\kappa}t\big)$ for some $t\in\mathbb{Z}_p$, in which case the above congruence becomes equivalent to
\[ f'(m)p^{-w}\equiv f'(\tilde{f}(t))p^{-w}\pmod{p^j}. \]
In light of $u'>\iota'$, the series $f'(t)p^{-w}=\sum_{k=0}^{\infty}a_k^{\sharp}t^k$ satisfies the conditions of Lemma~\ref{LocalInjectionLemma} for every $r<p^{\tilde{\lambda}}$, with $|a_1^{\sharp}|_p=p^{-(\iota'+\kappa)}$. In particular, an $m\in\mathbb{Z}_p$ is a solution of the above congruence if and only if
\[ m\equiv\tilde{f}(t)\pmod{p^{j'}}, \]
as announced.
\end{proof}

\begin{lemma}
\label{StatPhaseFourier}
Let $f\in\mathbf{F}(w,y,\kappa,\lambda,u,\omega,\omega')$, and assume that
\[ \min\big(n-w,\,\,u+\lfloor\lambda\rfloor\big)>\kappa+\iota',\quad\tilde{\lambda}=\min(\kappa-\rho_p(y),\lambda)>0. \]

Let
\begin{equation}
\label{EquationFourierTransform}
\hat{\mathbf{e}}_f(s)=\sum_{m\bmod{p^{n-w}}}e\left(\frac{f(m)p^{-w}-sm}{p^{n-w}}\right),
\end{equation}
and let $\varepsilon_{\lambda}=\lfloor\lambda\rfloor-\lceil\tilde{\lambda}\rceil$. Then, assuming additional conditions listed below if $p\in\{2,3\}$, there exists a power series $\breve{f}\in\mathbf{F}(\breve{w},y^{-1},\kappa,\tilde{\lambda},\breve{u},1,-\omega'\omega^{-1})$ with $\breve{f}'\in p^{w+\iota'+\kappa}\mathbf{I}^n_0[\tilde{\lambda}](\mathbb{Z}_p)$, where
\[ \breve{w}=w+\textnormal{ord}_py,\quad \breve{u}=u+\varepsilon_{\lambda}-\textnormal{ord}_py, \]
and an $\epsilon\in\mathbb{C}$, $|\epsilon|=1$ such that
\[ \hat{\mathbf{e}}_f(s)=\hat{\mathbf{e}}_f\Big(\tilde{g}_0+\omega'\big(1+p^{\iota'+\kappa}t\big)\Big)=\epsilon p^{(n-w+\iota'+\kappa)/2}e\left(\frac{\breve{f}(t)}{p^n}\right) \]
if $s=\tilde{g}_0+\omega'\big(1+p^{\iota'+\kappa}t\big)$ for some $t\in\mathbb{Z}_p$, and $\hat{\mathbf{e}}_f(s)=0$ otherwise. The unit $\epsilon$ depends only on $\omega$, $\omega'$, $y$, $p$, the parity of $n-w+\iota'+\kappa-\iota'(2)$, and, for $p=2$ only, on $p^{u-\kappa-\iota'}g'(0)$; in particular, it is independent of $s$.

In the case $p\in\{2,3\}$, we must make additional assumptions. Let $\nu\in\{0,1\}$ be the residue of $n-w+\iota'+\kappa-\iota'(2)$ modulo $2$, let $n_1=n-w-\kappa-\iota'-1$, let $\kappa+\iota'(y+1)=\nu+2\iota'(2)+\kappa_1$. Then assume additionally that
\[ \kappa+\iota'(y+1)\geqslant\nu+2\iota'(2),\quad n_1\geqslant 2\iota'(2)\nu,\quad n_1+\kappa_1\geqslant\iota'(3). \]
These assumptions are automatically satisfied if $\kappa\geqslant 1+\iota'(4)$ and $n-w>\iota'+\kappa+\iota'(12)$, or if $\kappa\geqslant 1+\iota'(12)$ and $n-w>\iota'+\kappa+\iota'(4)$.
\end{lemma}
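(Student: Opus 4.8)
\emph{Proof strategy.} The plan is to attack $\hat{\mathbf{e}}_f(s)$ by $p$-adic stationary phase, locate the critical points via the implicit function theorem, and collect their contributions as a $p$-adic Gaussian. Write $N=n-w$ and $F(t)=f(t)p^{-w}-st$, so that $\hat{\mathbf{e}}_f(s)=\sum_{m\bmod p^N}e(F(m)/p^N)$. From \eqref{ClassFDefinition}, the fact that $\pi^{-y}_{[\kappa+\iota]}\in\mathbf{I}^1_{\kappa+\iota'}(\mathbb{Z}_p)$, and the running hypothesis $u+\lfloor\lambda\rfloor>\kappa+\iota'$, one checks $F'\in\mathbb{Z}_p+p^{\kappa+\iota'}t\mathbf{I}_0(\mathbb{Z}_p)$, so Lemma~\ref{StatPhase} applies to $F$ with $\mu=\kappa+\iota'$. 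First I would fix the cutoff $j=j'+\iota'+\kappa$, choosing $j'$ as large as the condition $2(N-j)+(\kappa+\iota')\geqslant N+\iota'(2)$ of Lemma~\ref{StatPhase} permits --- equivalently, so that the quadratic coefficient produced in the last step has $p$-adic valuation exactly $N-\nu$ for odd $p$ and $N-\nu-2$ for $p=2$. That such a $j'\geqslant 0$ with $j\leqslant N-1$ exists, and that the ensuing Gauss sum stays in the elementary range $n\in\{0,1\}$ of Lemma~\ref{Gauss} (resp.\ $n\in\{2,3\}$ for $p=2$, avoiding the degenerate value $\epsilon(a,2^1)=0$), is precisely where the fine $p\in\{2,3\}$ conditions on $\kappa$, $n_1$, $\kappa_1$ come in. Lemma~\ref{StatPhase} then collapses $\hat{\mathbf{e}}_f(s)$ to a sum over $m\bmod p^N$ with $f'(m)p^{-w}\equiv s\pmod{p^j}$.

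Next I would invoke Lemma~\ref{ImplicitFunctionTheorem}, whose hypotheses ($u>\kappa-\lfloor\lambda\rfloor+\iota'$ and $\tilde\lambda>0$) follow from ours: the congruence $f'(m)p^{-w}\equiv s\pmod{p^j}$ is solvable iff $s\equiv\tilde g_0+\omega'\pmod{p^{\iota'+\kappa}}$, i.e.\ $s=\tilde g_0+\omega'(1+p^{\iota'+\kappa}t)$ for some $t\in\mathbb{Z}_p$, in which case the solutions are exactly $m\equiv\tilde f(t)\pmod{p^{j'}}$ with $j'=j-\iota'-\kappa$. This immediately gives $\hat{\mathbf{e}}_f(s)=0$ off the progression $\tilde g_0+\omega'(1+p^{\iota'+\kappa}\mathbb{Z}_p)$. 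On the progression I would write $m=\tilde f(t)+p^{j'}r$, with $r$ running modulo $p^{N-j'}$, and Taylor-expand $F$ via \eqref{TaylorSeriesValues}: the constant term is $f(\tilde f(t))p^{-w}-s\tilde f(t)$; the linear term in $r$ vanishes \emph{identically}, since $f'(\tilde f(t))p^{-w}=s$ by \eqref{ImplicitFunctionDefiningProperty}; the quadratic term is $\tfrac12 f''(\tilde f(t))p^{-w}p^{2j'}r^2$; and the terms of degree $\geqslant 3$ in $r$ are $\equiv 0\pmod{p^N}$ by an order count using $\ord_p\big(f^{(k)}(\tilde f(t))p^{-w}\big)\geqslant\iota'+\kappa$ --- with an extra gain for $k\geqslant 3$ coming from $\pi^{-y}_{[\kappa+\iota]}\in\mathbf{I}^1_{\kappa+\iota'}[\kappa-\rho_p(y)](\mathbb{Z}_p)$ and the $u$-separation --- together with $\ord_p(k!)\leqslant(k-1)\rho_p$ and the size of $j'$. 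I expect this $p$-adic bookkeeping, in particular the degree-$3$ term for $p=3$ and the degree-$3$ and $4$ terms for $p=2$, to be the main obstacle.

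What survives is $\hat{\mathbf{e}}_f(s)=e\big((f(\tilde f(t))p^{-w}-s\tilde f(t))/p^N\big)\cdot\sum_{r\bmod p^{N-j'}}e(ar^2/p^N)$ with $a=\tfrac12 f''(\tilde f(t))p^{-w}p^{2j'}$. Since the leading coefficient $a_1^{\sharp}$ of $f'(t)p^{-w}$ satisfies $\ord_p a_1^{\sharp}=\iota'+\kappa$ (as shown in the proof of Lemma~\ref{ImplicitFunctionTheorem}) and substituting $\tilde f(t)$, which vanishes at $0$, does not lower this, one has $\ord_p a=\iota'+\kappa-\iota'(2)+2j'$; factoring $p^{\ord_p a}$ out and reducing the Gaussian by periodicity leaves $p^{(N-j')-(N-\ord_p a)}\,\tau_{a_1}(p^{N-\ord_p a})$ with $a_1\in\mathbb{Z}_p^{\times}$ and $N-\ord_p a=\nu$ (resp.\ $\nu+2$ for $p=2$), which Lemma~\ref{Gauss} evaluates; collapsing powers of $p$ yields exactly $p^{(n-w+\iota'+\kappa)/2}$. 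The unit is $\epsilon=\epsilon(a_1,p^{N-\ord_p a})$; since $a_1\equiv-\tfrac12 y_0\omega\omega'\pmod p$ with $y_0=yp^{-\ord_p y}\in\mathbb{Z}_p^{\times}$ --- independently of $t$ --- this depends for odd $p$ only on $\omega,\omega',y,p$ and the parity $\nu$, while for $p=2$ the relevant residue of $a_1$ mod $4$ or $8$ additionally sees only $p^{u-\kappa-\iota'}g'(0)$ (the $\tilde f(t)$-dependence being beyond the needed precision because $\kappa\geqslant 2$). In particular $\epsilon$ is independent of $s$.

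Finally I would recognise the dual phase. Set $\breve f(t)=f(\tilde f(t))-p^w s(t)\tilde f(t)$ with $s(t)=\tilde g_0+\omega'(1+p^{\iota'+\kappa}t)$, so the exponent above is $\breve f(t)/p^n$, $\breve f\in\mathbb{Q}^{\times}_p\mathbf{I}_0(\mathbb{Z}_p)$, and $\breve f(0)=f(0)$. Differentiating, the $\tilde f'$-terms cancel by \eqref{ImplicitFunctionDefiningProperty} and $s'(t)=\omega'p^{\iota'+\kappa}$, giving the clean identity $\breve f'(t)=-p^{w+\iota'+\kappa}\omega'\tilde f(t)$; in particular $\breve f'\in p^{w+\iota'+\kappa}\mathbf{I}^n_0[\tilde\lambda](\mathbb{Z}_p)$ since $\tilde f\in\mathbf{I}^n_0[\tilde\lambda](\mathbb{Z}_p)$. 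Substituting the explicit form \eqref{ImplicitFunctionClasses} of $\tilde f$ and using $w+\iota'+\kappa-\iota-\kappa=w+\ord_p y=\breve w$, $\iota(y^{-1})=\iota'(y)$, and the multiplicativity \eqref{PowerFunctionMultiplicativity}, this becomes
\[ \breve f'(t)=p^{\breve w}(-\omega'\omega^{-1})\big(1+p^{\iota(y^{-1})+\kappa}t\big)^{-1/y}+p^{\breve w}\omega'\omega^{-1}-p^{w+u+\lfloor\lambda\rfloor}\omega'\tilde g(t). \]
Writing $p^{w+u+\lfloor\lambda\rfloor}=p^{\breve u+\breve w}p^{\lceil\tilde\lambda\rceil}$ with $\breve u=u+\varepsilon_\lambda-\ord_p y$, and noting that $-p^{\lceil\tilde\lambda\rceil}\omega'\tilde g\in\mathbf{I}_0[\tilde\lambda](\mathbb{Z}_p)$ (because $\tilde g\in t\mathbf{I}_0[\tilde\lambda](\mathbb{Z}_p)$ and $\lceil a\rceil+\lceil b\rceil\geqslant\lceil a+b\rceil$), this is exactly the assertion $\breve f\in\mathbf{F}(\breve w,y^{-1},\kappa,\tilde\lambda,\breve u,1,-\omega'\omega^{-1})$ of Definition~\ref{ClassesDefinition}. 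It then remains only to check the routine integrality points ($\breve u\in\mathbb{N}$, $\tilde\lambda\in\rho_p\mathbb{N}$) and that the stated sufficient conditions for $p\in\{2,3\}$ follow by substitution, completing the proof.
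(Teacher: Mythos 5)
Your proposal reproduces the paper's argument essentially step for step: the same application of Lemma~\ref{StatPhase} to the phase $F(m)=f(m)p^{-w}-sm$ with $\mu=\kappa+\iota'$ and the maximal admissible $j$; the same invocation of Lemma~\ref{ImplicitFunctionTheorem} to locate the support of $\hat{\mathbf{e}}_f$ and parametrize the critical points; the same Taylor expansion in the $p^{j'}$-cosets with vanishing linear term, a quadratic term of valuation $n-\nu-2\iota'(2)$, and higher-degree terms disposed of by order counting; the same Gauss-sum evaluation; and the same closing identity $\breve f'(t)=-\omega'p^{w+\iota'+\kappa}\tilde f(t)$ leading to the class $\mathbf{F}(\breve w,y^{-1},\kappa,\tilde\lambda,\breve u,1,-\omega'\omega^{-1})$. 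You correctly flag the remaining work as the careful $p\in\{2,3\}$ bookkeeping for the degree-$\geqslant 3$ terms, which is indeed where the paper expends the bulk of its effort and where the listed auxiliary conditions originate.
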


\begin{proof}
In light of $\pi_{[\iota+\kappa]}^{-y}\in\mathbf{I}^1_{\iota'+\kappa}(\mathbb{Z}_p)$ and $u+\lceil\lambda\rceil>\iota'+\kappa$, we have that $p^{-w}f'(t)\in\mathbb{Z}_p+p^\mu t\mathbf{I}_0(\mathbb{Z}_p)$, with $\mu=\iota'+\kappa$. Write
\[ n-w+\iota'+\kappa-\iota'(2)=2j+\nu, \]
with $j\in\mathbb{N}$ and $\nu\in\{0,1\}$. Note that, under our assumptions,
\[ \iota'+\kappa\leqslant j<n-w, \]
as well as
\[ 2(n-w-j)+(\iota'+\kappa)\geqslant n-w+\iota'(2). \]

This shows that all conditions are satisfied for an application of Lemma~\ref{StatPhase} to \eqref{EquationFourierTransform}. According to Lemma~\ref{StatPhase}, the summation in \eqref{EquationFourierTransform} can be restricted to indices $m$ for which
\begin{equation}
\label{CongruenceCondition}
f'(m)p^{-w}\equiv s\pmod{p^j}.
\end{equation}

All conditions are also satisfied for an application of Lemma~\ref{ImplicitFunctionTheorem}. Let $\tilde{f}$ be the power series whose existence is established there, and let $j'=j-\iota'-\kappa$. According to Lemma~\ref{ImplicitFunctionTheorem}, we have that indices $m$ satisfying \eqref{CongruenceCondition} exist if and only if
\[ s=\tilde{g}_0+\omega'\big(1+p^{\iota'+\kappa}t\big) \]
for some $t\in\mathbb{Z}_p$ (with congruence classes of $s\bmod{p^j}$ for which \eqref{CongruenceCondition} is solvable in one-to-one correspondence with congruence classes of $t\bmod{p^{j'}}$), in which case an $m\bmod{p^{n-w}}$ satisfies \eqref{CongruenceCondition} if and only if $m=\tilde{f}(t)+p^{j'}q$ for some $q\bmod{p^{n-w-j'}}$.

Consider two functions $\breve{f}(t,q):\mathbb{Z}_p\times\mathbb{Z}_p\to\mathbb{Z}_p$ and $\breve{f}(t):\mathbb{Z}_p\to\mathbb{Z}_p$ defined by their pointwise values as
\begin{gather}
\breve{f}(t,q)=f\big(\tilde{f}(t)+p^{j'}q\big)-p^w\Big(\tilde{g}_0+\omega'\big(1+p^{\iota'+\kappa}t\big)\Big)\big(\tilde{f}(t)+p^{j'}q\big),\nonumber\\
\breve{f}(t)=\breve{f}(t,0)=f(\tilde{f}(t))-p^w\Big(\tilde{g}_0+\omega'\big(1+p^{\iota'+\kappa}t\big)\Big)\tilde{f}(t).\label{DefinitionFBreve}
\end{gather}
With this notation, we have proved so far that
\begin{equation}
\label{IntermediateExpression}
\hat{\mathbf{e}}_f(s)=\hat{\mathbf{e}}_f\Big(\tilde{g}_0+\omega'\big(1+p^{\iota'+\kappa}t\big)\Big)=\sum_{q\bmod{p^{n-w-j'}}}e\left(\frac{\breve{f}(t,q)}{p^n}\right)
\end{equation}
if $s=\tilde{g}_0+\omega'\big(1+p^{\iota'+\kappa}t\big)$ for some $t\in\mathbb{Z}_p$, and $\hat{\mathbf{e}}_f(s)=0$ otherwise.

Using the Taylor expansion \eqref{TaylorSeriesValues}, we obtain, for every $t,q\in\mathbb{Z}_p$, an equality of values
\begin{align*}
\breve{f}(t,q)&=\sum_{r=0}^{\infty}\frac{f^{(r)}(\tilde{f}(t))}{r!}\big(p^{j'}q\big)^r-p^w\Big(\tilde{g}_0+\omega'\big(1+p^{\iota'+\kappa}t\big)\Big)\big(\tilde{f}(t)+p^{j'}q\big)\\
&=\breve{f}(t)+\Big[f'(\tilde{f}(t))-p^w\Big(\tilde{g}_0+\omega'\big(1+p^{\iota'+\kappa}t\big)\Big)\Big]p^{j'}q+\sum_{r=2}^{\infty}\frac{f^{(r)}(\tilde{f}(t))}{r!}p^{rj'}q^r\\
&=\breve{f}(t)+\frac12f''(\tilde{f}(t))p^{2j'}q^2+\sum_{r=3}^{\infty}\frac{f^{(r)}(\tilde{f}(t))}{r!}p^{rj'}q^r,
\end{align*}
recalling the defining property \eqref{ImplicitFunctionDefiningProperty} of $\tilde{f}$. Note that
\begin{equation}
\label{RthDerivative} 
f^{(r)}(t)p^{-w}=\omega'(-y)_{r-1}\big(p^{\iota+\kappa}\omega\big)^{r-1}\big(1+p^{\iota+\kappa}\omega t\big)^{-y-r+1}+p^ug^{(r-1)}(t).
\end{equation}
Since $u+\lceil\lambda\rceil>\iota'+\kappa$, we have that
\begin{align*}
\nu_2:=\text{ord}_p\bigg(\frac12f''(\tilde{f}(t))p^{2j'}\bigg)&=2j'+w+\kappa+\iota'-\iota'(2)\\
&=2j+w-\kappa-\iota'-\iota'(2)\\
&=n-\nu-2\iota'(2).
\end{align*}

We now consider the remaining infinite sum $E$ in the Taylor expansion for $\breve{f}(t,q)$ and set conditions under which $\ord_pE\geqslant n$ holds. We have that, for $r\geqslant 3$,
\begin{align*}
\text{ord}_p&\bigg(\frac{f^{(r)}(\tilde{f}(t))}{r!}p^{rj'}q^r\bigg)-w\\
&\geqslant\min\big((r-1)\kappa+rj'+\iota'+\iota'(y+1)-\text{ord}_pr!,u+\lceil(r-1)\lambda\rceil+rj'-\text{ord}_pr\big).
\end{align*}
We now carefully (and tediously) examine each of the two terms in the above expression, which we denote by $\nu_{r,1}(p)$ and $\nu_{r,2}(p)$. (This examination is not pleasant. At the first reading, the reader is encouraged to skip it or consider the case $p\not\in\{2,3\}$, for which we will see that no further assumptions are needed.)

Denote temporarily $\iota''=\iota'(y+1)$. Using that $\text{ord}_pr!\leqslant (r-1)/(p-1)$, we have that, for $r\geqslant 3$,
\begin{align*}
\nu_{r,1}(p)&:=(r-1)\kappa+rj'+\iota'+\iota''-\text{ord}_pr!\\
&\geqslant \lceil(r-1)\big(\kappa+j'-\rho_p\big)\rceil+j'+\iota'+\iota''\\
&\geqslant\lceil 2\kappa+3j'-2\rho_p\rceil+\iota'+\iota''\\
&=2\kappa+3j'+\iota'+\iota''-\iota'(12).
\end{align*}
In fact, we have the slightly stronger estimate
\[ \nu_{r,1}(p)\geqslant 2\kappa+3j'+\iota'+\iota''-\iota'(6). \]
Namely, when $p=2$, this follows by direct verification for $r=3$ and from $\nu_{r,1}(2)\geqslant\lceil 3\kappa+4j'-3\rho_2\rceil+\iota'+\iota''$ and $\kappa\geqslant 2$ for $r\geqslant 4$.

On the other hand,
\begin{align*}
\nu_{r,2}(p)&:=u+\lceil (r-1)\lambda\rceil+rj'-\text{ord}_pr\\
&\geqslant u+rj'+\lceil (r-1)(\lambda-\rho_p)\rceil\\
&\geqslant u+3j'+\lceil 2(\lambda-\rho_p)\rceil.
\end{align*}
In fact, recalling also that $\lambda\in\mathbb{N}$ for $p=2$, we have that $\nu_{3,2}(2)=u+3j'+2\lambda$ and $\nu_{4,2}(2)=u+4j'+3\lambda-2$; since $\ord_2r\leqslant r-5$ for $r\geqslant 5$, we also have that $\nu_{r,2}(2)\geqslant u+5j'+4\lambda>\nu_{3,2}(2)$ for all $r\geqslant 5$, so that
\[ \nu_{r,2}(p)\geqslant u+3j'+\hat{\nu}_2,\quad
\hat{\nu}_2:=\begin{cases}
\lceil 2(\lambda-\rho_p)\rceil, &p\geqslant 3,\\
\min\left(2\lambda,j'+3\lambda-2\right), &p=2.
\end{cases} \]

Summing up, we have that
\[ \text{ord}_pE\geqslant\nu_E:=\min\big(2\kappa+3j'+\iota'+\iota''-\iota'(6),u+3j'+\hat{\nu}_2\big)+w. \]

This gives us two conditions that need to be met for $\nu_E\geqslant n$. The first is that
\begin{align*}
2\kappa&+3j'+\iota'+\iota''-\iota'(6)\\
&=3j-3\iota'-\kappa+\iota'+\iota''-\iota'(6)\\
&=\tfrac32\big(n-w+\iota'+\kappa-\iota'(2)\big)-\tfrac32\nu-3\iota'-\kappa+\iota'+\iota''-\iota'(6)\geqslant n-w,
\end{align*}
which is equivalent to
\[ \big(n-w-\iota'+\kappa-\iota'(2)\big)-\nu+2\iota''\geqslant 2\nu+\iota'(16\cdot 9). \]
By parity considerations, this inequality will be satisfied whenever
\begin{equation}
\label{NuECondition1}
(n-w)+\kappa+2\iota''\geqslant \iota'+2\nu+\iota'(32\cdot 9).
\end{equation}
In light of $n-w=\kappa+\iota'+1+n_1$, $n_1\geqslant 0$, the above is satisfied whenever
\[ 2\kappa+2\iota'(y+1)+1+n_1\geqslant 2\nu+\iota'(32\cdot 9), \]
and this is automatically satisfied for $p\not\in\{2,3\}$. For $p\in\{2,3\}$, substituting $\kappa+\iota''=\nu+2\iota'(2)+\kappa_1$, $\kappa_1\geqslant 0$, the above inequality reads as
\[ 1+n_1+2\kappa_1\geqslant \iota'(18), \]
which is trivially satisfied in light of the condition that $n_1+\kappa_1\geqslant\iota'(3)$.

The other condition for $\nu_E\geqslant n$ is that
\begin{gather*}
u+3j'+\hat{\nu}_2=u+\tfrac32\big(n-w+\iota'+\kappa-\iota'(2)\big)-\tfrac32\nu-3\iota'-3\kappa+\hat{\nu}_2\geqslant n-w\\
(n-w-3\iota'-3\kappa-3\iota'(2))-\nu+2u+2\hat{\nu}_2\geqslant 2\nu.
\end{gather*}
Again, by parity considerations, this inequality will be satisfied whenever
\begin{equation}
\label{NuECondition2}
(n-w)+2u+2\hat{\nu}_2\geqslant 3\iota'+3\kappa+2\nu+\iota'(8).
\end{equation}
We first comment how \eqref{NuECondition2} is always satisfied for $p\geqslant 3$. Indeed, for $p\geqslant 3$ we have that
\[ \hat{\nu}_2=\lceil 2(\lambda-\rho_p)\rceil\geqslant\lfloor\lambda\rfloor; \]
this is trivially true if $\lambda=\rho_p$ and follows from $2(\lambda-\rho_p)\geqslant\lambda$ if $\lambda\geqslant 2\rho_p$. The inequality \eqref{NuECondition2} now follows from $n-w\geqslant\iota'+\kappa+1$ and $u+\lfloor\lambda\rfloor\geqslant\iota'+\kappa+1$.

Verifying the condition \eqref{NuECondition2} for $p=2$ involves checking all cases. The above argument clearly applies if $\hat{\nu}_2=2\lambda$. If $\hat{\nu}_2=j'+3\lambda-2$, then (substituting for $j'$ as above), \eqref{NuECondition2} reads as
\begin{equation}
\label{VerySpecialCase}
\begin{gathered}
(n-w)+2u+2j-2\iota'-2\kappa+6\lambda-4\geqslant 3\iota'+3\kappa+2\nu+3\\
2(n-w)+2u+6\lambda\geqslant 4\iota'+4\kappa+3\nu+8,
\end{gathered}
\end{equation}
and this follows immediately in light of $n-w\geqslant\iota'+\kappa+1+2\iota'(2)\nu$.

\medskip

Having checked that $\nu_E\geqslant n$, we conclude that
\[ \breve{f}(t,q)\equiv\breve{f}(t)+\frac12f''(\tilde{f}(t))p^{2j'}q^2\pmod{p^n}. \]
Writing $\tilde{\nu}=n-\nu_2=\nu+2\iota'(2)$, the summation in the intermediate stationary phase expression \eqref{IntermediateExpression} becomes
\begin{align*}
\hat{\mathbf{e}}_f\Big(\tilde{g}_0+\omega'\big(1+p^{\iota'+\kappa}t\big)\Big)
&=\sum_{q\bmod{p^{n-w-j'}}}e\left(\frac{\breve{f}(t)+\frac12f''(\tilde{f}(t))p^{2j'}q^2}{p^n}\right)\\
&=p^{(n-w-j'-\tilde{\nu})}e\left(\frac{\breve{f}(t)}{p^n}\right)\sum_{q\bmod{p^{\tilde{\nu}}}}e\left(\frac{\frac12f''(\tilde{f}(t))p^{2j'-\nu_2}q^2}{p^{\tilde{\nu}}}\right).
\end{align*}

The remaining sum can be evaluated by Lemma~\ref{Gauss} as
\[ p^{(\tilde{\nu}+\iota'(2))/2}\epsilon\left(\frac12f''(\tilde{f}(t))p^{2j'-\nu_2},p^{\tilde{\nu}}\right), \]
where, for an odd prime $p$, $\epsilon\big(a,p^{\tilde{\nu}}\big)$ depends on $p$, the parity of $\tilde{\nu}$, and the class of $a\bmod p$ only, while for $p=2$ and $\tilde{\nu}\in\{2,3\}$, $\epsilon\big(a,2^{\tilde{\nu}}\big)$ also depends on the class of $a\bmod{2^{\tilde{\nu}}}$. We have already seen (compare \eqref{RthDerivative} for $r=2$) that, for an odd $p$ (when $\tilde{\nu}\in\{0,1\}$),
\[ a=\frac12f''(\tilde{f}(t))p^{2j'-\nu_2}\equiv\omega\omega'(-y/2)|y/2|_p\pmod{p^{\tilde{\nu}}}. \]

In the case $p=2$ (when $\tilde{\nu}\in\{2,3\}$), considering the power $\nu^{+}_2$ of $p$ in non-constant terms in \eqref{RthDerivative}, we find that
\begin{align*}
\nu^{+}_2-\nu_2
&\geqslant\min\big(\kappa+\iota'',u+\lceil 2\lambda\rceil+\iota'(2)-\kappa-\iota',u+\lceil 3\lambda\rceil-\kappa-\iota'\big)\\
&\geqslant\min\big(\nu+2\iota'(2),3\big)=\tilde{\nu},
\end{align*}
and therefore
\[ a=\frac12f''(\tilde{f}(t))p^{2j'-\nu_2}\equiv\omega\omega'(-y)|y|_p+p^{u-\kappa-\iota'}g'(0)\pmod{p^{\tilde{\nu}}} \]
in this case.

We conclude that, in any case,
\begin{align*}
\hat{\mathbf{e}}_f\Big(\tilde{g}_0+\omega'&\big(1+p^{\iota'+\kappa}t\big)\Big)\\
&=p^{\tilde{n}/2}\epsilon\left(\omega\omega'(-y/2)|y/2|_p+p^{u-\kappa-\iota'}g'(0),p^{\tilde{\nu}}\right)e\left(\frac{\breve{f}(t)}{p^n}\right),
\end{align*}
where
\begin{align*}
\tilde{n}&=2\big(n-w-j'-\tilde{\nu}\big)+\big(\tilde{\nu}+\iota'(2)\big)\\
&=2(n-w)-2j+2\iota'+2\kappa-\nu-\iota'(2)\\
&=n-w+\iota'+\kappa.
\end{align*}

Note that the equation \eqref{DefinitionFBreve} also defines $\breve{f}(t)$ as a formal power series; since $r_f=r_{f'}\geqslant p^{\tilde{\lambda}}$, $\tilde{f}(t)\in\mathbf{I}^n_0[\tilde{\lambda}](\mathbb{Z}_p)$, and, for every $r<p^{\tilde{\lambda}}$, $M_r\tilde{f}\doteq r$, the numerical substitution of $\tilde{f}(t)$ in \eqref{DefinitionFBreve} is allowed for $|t|_p<p^{\tilde{\lambda}}$. To complete the proof of Lemma~\ref{StatPhaseFourier}, it remains to prove that $\breve{f}$ belongs to the announced classes. According to the Chain Rule \eqref{ChainRuleEquation}, we obtain from \eqref{DefinitionFBreve}, \eqref{ImplicitFunctionDefiningProperty}, and \eqref{ImplicitFunctionClasses} that
\begin{align*}
\breve{f}'(t)&=f'(\tilde{f}(t))\tilde{f}'(t)-p^w\Big(\tilde{g}_0+\omega'\big(1+p^{\iota'+\kappa}t\big)\Big)\tilde{f}'(t)-\omega'p^{w+\iota'+\kappa}\tilde{f}(t)\\
&=-\omega'p^{w+\iota'+\kappa}\tilde{f}(t)\\
&=-\omega'\omega^{-1}p^{w+\ord_py}\big(1+p^{\iota'+\kappa}t\big)^{-1/y}+\omega'\omega^{-1}p^{w+\ord_py}-\omega'p^{w+u+\lfloor\lambda\rfloor}\tilde{g}(t).
\end{align*}
This clearly shows that $\breve{f}'\in p^{w+\iota'+\kappa}\mathbf{I}^n_0[\tilde{\lambda}](\mathbb{Z}_p)$. Recalling Definition~\ref{ClassesDefinition}, we have that, indeed, $\breve{f}\in\mathbf{F}(\breve{w},1/y,\kappa,\tilde{\lambda},\breve{u},1,-\omega'\omega^{-1})$ with
\[ \breve{w}=w+\ord_py,\quad \breve{u}=u+\lfloor\lambda\rfloor-\lceil\tilde{\lambda}\rceil-\text{ord}_py, \]
as announced.
\end{proof}

We would like to point out the following feature of \eqref{EquationFourierTransform}, which is a complete exponential sum modulo $p^{n-w}$. That such exponential sums reduce to sums over the (approximate) critical points is classical; however, in general, one also encounters contributions from singular critical points, and these can be very difficult to evaluate or estimate. An extremely important feature of our definition of the class $\mathbf{F}$ is that it guarantees that we never encounter singular points, while still being sufficiently broad to cover all cases of interest for the estimation of short character sums. It is this feature that allows for the handsome, compact looks of the result of Lemma~\ref{StatPhaseFourier}, the main thrust of whose proof is to explicate the $p$-adic implicit function $\tilde{f}$ in a neighborhood of a non-singular critical point and collect all contributions through explicit computations with $p$-adic Gaussians.

We also remark that many of the conditions included in Lemma~\ref{StatPhaseFourier} for $p\in\{2,3\}$ can be relaxed or altogether dropped by allowing higher-order terms and directly evaluating the resulting sums. For example, the condition $n-w>\iota'+\kappa$ actually suffices for \eqref{VerySpecialCase} in the case $p=2$. Namely, \eqref{VerySpecialCase} also holds if $n-w=\iota'+\kappa+1$ (when $\nu=0$), or if $u+\lambda\geqslant\iota'+\kappa+2$, or if $\lambda\geqslant 2$. In the remaining case $n-w=\iota'+\kappa+2$, $\nu=1$, $\tilde{\nu}=3$, $j'=0$, $\lambda=1$, $u=\iota'+\kappa$, we incur the extra term
\[ \frac{f^{(iv)}(\tilde{f}(t))}{4!}p^{4j'}q^4\equiv A_2p^{n-1}q^4\pmod{p^n}, \]
where $A_2=p^{u+w-n+1}g^{(iii)}(0)/4!\in\mathbb{Z}_2$, and the summation in \eqref{IntermediateExpression} becomes
\[ \hat{\mathbf{e}}_f\Big(\tilde{g}_0+\omega'\big(1+p^{\iota'+\kappa}t\big)\Big)=p^{n-w-3}e\left(\frac{\breve{f}(t)}{p^n}\right)\sum_{q\bmod 8}e\left(\frac{aq^2+4A_2q^4}8\right). \]
Since $4A_2q^4\equiv 4A_2q^2\pmod 8$, the inner sum can again be evaluated by Lemma~\ref{Gauss}, yielding Lemma~\ref{StatPhaseFourier} with only a change in the value of $\epsilon$. However, we chose the current formulation, which reflects conditions that guarantee a purely quadratic-term expansion at each stationary point.

In the applications of Lemma~\ref{StatPhaseFourier}, we will simply assume that $\kappa\geqslant 1+\iota'(4)$ and $n-w>\iota'+\kappa+\iota'(12)$; while these conditions can occasionally be somewhat relaxed, we will not be concerned with this aspect, which is anyway relevant for $p\in\{2,3\}$ only.

\medskip

We collect the fruits of our labor in the following summation formula.

\begin{theorem}[Summation Formula]
\label{SummationFormula}
Let $f\in\mathbf{F}(w,y,\kappa,\lambda,u,\omega,\omega')$, $n\in\mathbb{N}$, $B>0$, and a Schwarz function $h\in C^{\infty}_0(\mathbb{R})$ be given, and assume that
\begin{equation}
\label{BProcessCondition1}
\begin{gathered}
\kappa\geqslant 1+\iota'(4),\quad n-w>\kappa+\iota'+\iota'(12),\\
u+\lfloor\lambda\rfloor>\kappa+\iota',\quad\tilde{\lambda}=\min(\kappa-\rho_p(y),\lambda)>0.
\end{gathered}
\end{equation}
Let $\varepsilon_{\lambda}=\lfloor\lambda\rfloor-\lceil\tilde{\lambda}\rceil$. Then there exists a function
\begin{equation}
\label{DualFClasses}
\mathring{f}\in\mathbf{F}\big(w+\ord_py,y^{-1},\kappa,\tilde{\lambda},u+\varepsilon_{\lambda}-\ord_py,\omega'{}^{-1},-\omega^{-1}\big)
\end{equation}
depending on $f$ only and an $\epsilon\in\mathbb{C}$, $|\epsilon|=1$, such that
\[ \sum_{m\in\mathbb{Z}}e\left(\frac{f(m)}{p^n}\right)h\left(\frac mB\right)=\frac{\epsilon B}{p^{(n-w-\iota'-\kappa)/2}}\sum_{t\in\mathbb{Z}}e\left(\frac{\mathring{f}(t)}{p^n}\right)\hat{h}_{f,B}\left(\frac{t}{p^{n-w-\iota'-\kappa}/B}\right), \]
where $\hat{h}_{f,B}$ is a reflected translate of the Fourier transform $\hat{h}$ given by
\[ \hat{h}_{f,B}(t)=\hat{h}\left(-t-\frac{f'(0)}{p^n/B}\right). \]
\end{theorem}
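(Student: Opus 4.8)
The plan is to apply $p$-adic Poisson summation, evaluate the resulting complete sums by Lemma~\ref{StatPhaseFourier}, and then repackage the dual sum. First, the hypotheses \eqref{BProcessCondition1} are precisely what is needed to invoke Lemma~\ref{StatPhaseFourier}: one has $\min(n-w,u+\lfloor\lambda\rfloor)>\kappa+\iota'$ and $\tilde\lambda>0$, and the extra conditions imposed there for $p\in\{2,3\}$ are guaranteed by $\kappa\geqslant 1+\iota'(4)$ and $n-w>\iota'+\kappa+\iota'(12)$, as recorded in that lemma. Since $\mathbf{e}_f$ is periodic of period $Q:=p^{n-w}$ (as verified after Definition~\ref{ClassesDefinition}) and $e(f(m)/p^n)=e\big((f(m)p^{-w})/Q\big)$, I expand the periodic factor into its finite Fourier series over $\mathbb{Z}/Q\mathbb{Z}$, apply ordinary Poisson summation over $\mathbb{Z}$ to each frequency (absolutely convergent since $h$ and $\hat h$ are of rapid decay and $|\hat{\mathbf{e}}_f|\leqslant Q$), and recognize the coefficient of the $\ell$-th frequency as $\hat{\mathbf{e}}_f(-\ell)$ via \eqref{EquationFourierTransform}, obtaining
\[ \sum_{m\in\mathbb{Z}}e\left(\frac{f(m)}{p^n}\right)h\left(\frac mB\right)=\frac BQ\sum_{\ell\in\mathbb{Z}}\hat{\mathbf{e}}_f(-\ell)\,\hat h\!\left(\frac{\ell B}Q\right). \]

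Next I feed in Lemma~\ref{StatPhaseFourier}. The term $\ell$ contributes only when $-\ell\equiv\tilde g_0+\omega'\equiv f'(0)p^{-w}\pmod{p^{\iota'+\kappa}}$, i.e.\ when $\ell$ lies in a single fixed residue class modulo $p^{\iota'+\kappa}$; and then, writing $-\ell=\tilde g_0+\omega'\big(1+p^{\iota'+\kappa}\tau_\ell\big)$ (so $\tau_\ell=-\omega'^{-1}p^{-\iota'-\kappa}\big(\ell+f'(0)p^{-w}\big)$), Lemma~\ref{StatPhaseFourier} gives $\hat{\mathbf{e}}_f(-\ell)=\epsilon\,p^{(n-w+\iota'+\kappa)/2}e\big(\breve f(\tau_\ell)/p^n\big)$ with $\epsilon$ of modulus $1$ independent of $\ell$ and $\breve f\in\mathbf{F}\big(w+\ord_py,\,y^{-1},\,\kappa,\,\tilde\lambda,\,u+\varepsilon_\lambda-\ord_py,\,1,\,-\omega'\omega^{-1}\big)$. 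I reparametrize the surviving $\ell$ by $t:=-p^{-\iota'-\kappa}\big(\ell+f'(0)p^{-w}\big)$, under which $\tau_\ell=\omega'^{-1}t$ and $\ell B/Q=-tB/p^{n-w-\iota'-\kappa}-f'(0)B/p^n$, so that $\hat h(\ell B/Q)=\hat h_{f,B}\big(t/(p^{n-w-\iota'-\kappa}/B)\big)$. Collecting $\tfrac BQ\,p^{(n-w+\iota'+\kappa)/2}=B\,p^{-(n-w-\iota'-\kappa)/2}$, this yields the asserted identity with $\mathring f(t):=\breve f(\omega'^{-1}t)$.

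It remains to check that this $\mathring f$ lies in the class \eqref{DualFClasses}. Since $\omega'^{-1}\in\mathbb{Z}^{\times}_p$ and $r_{\breve f}=r_{\breve f'}\geqslant p^{\tilde\lambda}>1$, the substitution $t\mapsto\omega'^{-1}t$ is an admissible composition (Lemma~\ref{NumericalSubstitution}) preserving $\mathbb{Q}^{\times}_p\mathbf{I}_0(\mathbb{Z}_p)$ and every $\mathbf{I}_0[\lambda](\mathbb{Z}_p)$. Differentiating, $\mathring f'(t)=\omega'^{-1}\breve f'(\omega'^{-1}t)$; inserting the defining form of $\breve f'$ and using $\iota(y^{-1})=\iota'(y)=\iota'$, the leading coefficient becomes $\omega'^{-1}\cdot(-\omega'\omega^{-1})=-\omega^{-1}$, the inner linear term becomes $1+p^{\iota'+\kappa}\omega'^{-1}t$, and the remaining $p^{\breve w}\mathbb{Z}_p$-term and $p^{\breve u+\breve w}\mathbf{I}_0[\tilde\lambda](\mathbb{Z}_p)$-term are preserved. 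Hence $\mathring f\in\mathbf{F}\big(w+\ord_py,\,y^{-1},\,\kappa,\,\tilde\lambda,\,u+\varepsilon_\lambda-\ord_py,\,\omega'^{-1},\,-\omega^{-1}\big)$ (recall $\breve w=w+\ord_py$ and $\breve u=u+\lfloor\lambda\rfloor-\lceil\tilde\lambda\rceil-\ord_py=u+\varepsilon_\lambda-\ord_py$), and it depends only on $f$.

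The step I expect to be the main obstacle is the bookkeeping of that reparametrization: one must line up, simultaneously and with the correct signs, the base point of the arithmetic progression of surviving dual frequencies with the translation $f'(0)/(p^n/B)$ built into $\hat h_{f,B}$, and the $p$-adic critical-point parameter $\tau_\ell$ with the argument $\omega'^{-1}t$ of $\breve f$ --- equivalently, isolating cleanly the single unit scaling $t\mapsto\omega'^{-1}t$ that converts the intrinsic dual phase $\breve f$ produced by Lemma~\ref{StatPhaseFourier} into the normalized $\mathring f$ carrying the tidy parameters $(\omega'^{-1},-\omega^{-1})$. Once this dictionary is fixed, the remainder is a routine transcription of the Poisson identity together with the output of Lemma~\ref{StatPhaseFourier}.
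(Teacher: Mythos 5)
Your proposal is correct and follows essentially the same route as the paper: Poisson summation (which you combine with the finite Fourier expansion in one step, where the paper deliberately separates the Parseval and Poisson stages to emphasize duality, noting that "the expression of $S$ can be reached in fewer steps"), then Lemma~\ref{StatPhaseFourier} to evaluate $\hat{\mathbf{e}}_f$, and finally the reparametrization with $\mathring{f}(t)=\breve{f}(\omega'^{-1}t)$ to land in the advertised class $\mathbf{F}(\breve{w},y^{-1},\kappa,\tilde\lambda,\breve{u},\omega'^{-1},-\omega^{-1})$. The bookkeeping you flag as the likely sticking point is indeed where the paper spends most of its effort, and your dictionary ($\tau_\ell=\omega'^{-1}t$, $\ell B/Q=-tB/p^{n-w-\iota'-\kappa}-f'(0)B/p^n$) matches the paper's change of variable exactly.
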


\begin{proof}
Let $S$ denote the sum on the left-hand side of the equality to be proved. Since $e(f(t)/p^n)$ is periodic with period $p^{n-w}$, we have that
\begin{align}
S&=\sum_{m\bmod{p^{n-w}}}\sum_{q\in\mathbb{Z}}e\left(\frac{f\big(m+p^{n-w}q\big)}{p^n}\right)h\left(\frac{m+p^{n-w}q}B\right)\nonumber\\
&=\sum_{m\bmod{p^{n-w}}}e\left(\frac{f(m)p^{-w}}{p^{n-w}}\right)h^{\sharp}_B(m),\label{FirstExpressionOfS}
\end{align}
where
\[ h^{\sharp}_B(m)=\sum_{q\in\mathbb{Z}}h\left(\frac{m+p^{n-w}q}B\right) \]
is a $(\mathbb{Z}/p^{n-w}\mathbb{Z})$-periodic function. Applying Parseval's identity, we have that
\[ S=\frac1{p^{n-w}}\sum_{s\bmod{p^{n-w}}}\hat{\mathbf{e}}_f(s)\hat{h}^{\sharp}_B(-s), \]
where $\hat{\mathbf{e}}_f(s)$ is as in \eqref{EquationFourierTransform}, while, by unfolding,
\begin{align*}
\hat{h}^{\sharp}_B(s)&=\sum_{m\bmod{p^{n-w}}}h^{\sharp}_B(m)e\left(-\frac{sm}{p^{n-w}}\right)\\
&=\sum_{m\bmod{p^{n-w}}}\sum_{q\in\mathbb{Z}}h\left(\frac{m+p^{n-w}q}B\right)e\left(-\frac{s\big(m+p^{n-w}q\big)}{p^{n-w}}\right)\\
&=\sum_{m\in\mathbb{Z}}h\left(\frac mB\right)e\left(-\frac{sm}{p^{n-w}}\right).
\end{align*}
Applying the Poisson summation formula, we find that
\[ \hat{h}^{\sharp}_B(s)=\sum_{\sigma\in\mathbb{Z}}\int_{-\infty}^{\infty}h\left(\frac xB\right)e\left(-\frac{sx}{p^{n-w}}-\sigma x\right)\,\text{d}x=B\sum_{\sigma\in\mathbb{Z}}\hat{h}\left(\frac{s}{p^{n-w}/B}+B\sigma\right), \]
where $\hat{h}$ denotes the usual Fourier transform. Therefore,
\begin{equation}
\label{ExpressionOfS}
S=\frac{B}{p^{n-w}}\sum_{s\bmod{p^{n-w}}}\sum_{\sigma\in\mathbb{Z}}\hat{\mathbf{e}}_f(s)\hat{h}\left(-\frac{s}{p^{n-w}/B}+B\sigma\right).
\end{equation}

We make a remark that, while the expression of $S$ as \eqref{ExpressionOfS} can be reached in fewer steps, we have structured the above proof so as to emphasize the r\^ole of duality in passing from a long sum in \eqref{FirstExpressionOfS} to a short one or conversely.

We now crucially apply Lemma~\ref{StatPhaseFourier}. According to this lemma, which may be applied in light of \eqref{BProcessCondition1}, the Fourier transform $\hat{\mathbf{e}}_f(s)$ vanishes unless $s=\tilde{g}_0+\omega'\big(1+p^{\iota'+\kappa}t\big)$ for some $t\in\mathbb{Z}_p$, in which case $\hat{\mathbf{e}}_f(s)$ is given in terms of the exponential $e\big(\breve{f}(t)/p^n\big)$, with the function $\breve{f}$ as in the statement of Lemma~\ref{StatPhaseFourier}. Using this result, we obtain that
\begin{align*}
S&=\frac{B}{p^{n-w}}\epsilon p^{(n-w+\iota'+\kappa)/2}\sum_{t\bmod{p^{n-w-\iota'-\kappa}}}\sum_{\sigma\in\mathbb{Z}}\\
&\qquad\qquad\qquad e\left(\frac{\breve{f}(t)}{p^n}\right)\hat{h}\left(-\frac{\tilde{g}_0+\omega'\big(1+p^{\iota'+\kappa}t\big)}{p^{n-w}/B}+B\sigma\right)\\
&=\frac{\epsilon B}{p^{(n-w-\iota'-\kappa)/2}}\sum_{t\bmod{p^{n-w-\iota'-\kappa}}}\sum_{\sigma\in\mathbb{Z}}\\
&\qquad\qquad\qquad e\left(\frac{\breve{f}(\omega'{}^{-1}t)}{p^n}\right)\hat{h}\left(-\frac{\big(t-p^{n-w-\iota'-\kappa}\sigma\big)+\big(\tilde{g}_0+\omega'\big)p^{-\iota'-\kappa}}{p^{n-w-\iota'-\kappa}/B}\right)\\
&=\frac{\epsilon B}{p^{(n-w-\iota'-\kappa)/2}}\sum_{t\in\mathbb{Z}}e\left(\frac{\breve{f}(\omega'{}^{-1}t)}{p^n}\right)\hat{h}\left(-\frac{t+\big(\tilde{g}_0+\omega'\big)p^{-\iota'-\kappa}}{p^{n-w-\iota'-\kappa}/B}\right),
\end{align*}
by unfolding again. The statement of the theorem follows by setting
\[ \mathring{f}(t):=\breve{f}(\omega'{}^{-1}t)\in\mathbf{F}\big(\breve{w},y^{-1},\kappa,\tilde{\lambda},\breve{u},\omega'{}^{-1},-\omega^{-1}\big), \]
noting that $\big(\mathring{f}(t)\big)'=\omega'{}^{-1}\breve{f}'(\omega'{}^{-1}t)$ and recalling that $\tilde{g}_0+\omega'=f'(0)p^{-w}$.
\end{proof}

\begin{theorem}[$B$-process]
\label{Bprocess}
If $\big(k,\ell,r,\delta,(n_0,u_0,\kappa_0,\lambda_0)\big)$ is a $p$-adic exponent datum, then so is
\[ B\big(k,\ell,r,\delta,(n_0,u_0,\kappa_0,\lambda_0)\big)=\left(\ell-\frac12,k+\frac12,\tilde{r},\tilde{\delta},(\tilde{n}_0,\tilde{u}_0,\tilde{\kappa}_0,\tilde{\lambda}_0)\right), \]
where, denoting $\tilde{\lambda}=\min\big(\kappa-\rho_p(y),\lambda\big)$,
\[ \tilde{r}(y,p,\kappa,\lambda)=r\big(y^{-1},p,\kappa,\tilde{\lambda}\big),\quad\tilde{\delta}=\delta+\delta_{01}, \]
\begin{align*}
\tilde{\kappa}_0(y,p)&=\max\big(1+\iota'(4),\kappa_0(y^{-1},p),\lambda_0(y^{-1},p)+\rho_p(y)\big),\\
\tilde{\lambda}_0(y,p)&=\lambda_0(y^{-1},p),\\
\tilde{n}_0(y,p,\kappa,\lambda)&=\max\big(\kappa+\iota'+1+\iota'(12),\textnormal{ord}_py+n_0(y^{-1},p,\kappa,\tilde{\lambda})\big),\\
\tilde{u}_0(y,p,\kappa,\lambda)&=\max\big(\kappa-\lfloor\lambda\rfloor+\iota'+1,\textnormal{ord}_py+u_0(y^{-1},p,\kappa,\tilde{\lambda})+\lceil\tilde{\lambda}\rceil-\lfloor\lambda\rfloor,1\big).
\end{align*}
\end{theorem}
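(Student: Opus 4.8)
The plan is to derive the $B$-process from the Summation Formula (Theorem~\ref{SummationFormula}) together with the sharp--smooth dictionary of Lemma~\ref{EquivalenceSharpSmooth}: dualize the sum, apply the given datum $q$ to the (short) dual sum, and then unwind the parameters. Write $q=\big(k,\ell,r,\delta,(n_0,u_0,\kappa_0,\lambda_0)\big)$ and $\tilde q=B(q)$ for the quintuple in the statement. That $\tilde q\in\mathbf{Q}$ is immediate: $\tfrac12\leqslant\ell\leqslant 1$ gives $0\leqslant\ell-\tfrac12\leqslant\tfrac12$, $0\leqslant k\leqslant\tfrac12$ gives $\tfrac12\leqslant k+\tfrac12\leqslant 1$, one has $\tilde n_0\geqslant\kappa+\iota'(y)+1>\kappa+\iota'(y)$, and $\tilde r,\tilde n_0,\tilde u_0,\tilde\kappa_0,\tilde\lambda_0$ lie in the required classes $\mathbf{J},\mathbf{J}_1$ because they are built from the components of $q$ by the substitution $y\mapsto y^{-1}$, the replacement $\lambda\mapsto\tilde\lambda:=\min(\kappa-\rho_p(y),\lambda)$, and $\mathbb{Z}$-linear shifts. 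By the characterization of exponent data recalled just before Lemma~\ref{EquivalenceSharpSmooth} and the implications proved there, it suffices to establish, for the family $\tilde{\mathcal C}=\{\mathbf{F}(w,y,\kappa,\lambda,u):\kappa\geqslant\tilde\kappa_0,\lambda\geqslant\tilde\lambda_0,u\geqslant\tilde u_0\}$ with conductor shift $w+\kappa+\iota'(y)$, the condition $H_{\textnormal{sm}}(\delta)^{sq}$ for the triple $\big(\ell-\tfrac12,\,k+\tfrac12,\,(\tilde r,\,w+\kappa+\iota'(y),\,\tilde n_0)\big)$. Indeed, $H_{\textnormal{sm}}(\delta)^{sq}\Rightarrow H(\delta+\delta_{1/2})^{sq}$ by Lemma~\ref{EquivalenceSharpSmooth}, the new pair $(\ell-\tfrac12,k+\tfrac12)$ equals $(\tfrac12,\tfrac12)$ exactly when $(k,\ell)=(0,1)$, whence $\delta_{1/2}=\delta_{01}$ and $\delta+\delta_{1/2}=\tilde\delta$; thus $\tilde q$ is a $p$-adic exponent datum. (Theorem~\ref{SummationFormula} is stated for Schwarz cutoffs, so we obtain the smooth bound for Schwarz $h$ only, but that is all the proof of that implication in Lemma~\ref{EquivalenceSharpSmooth} consumes.)

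To prove $H_{\textnormal{sm}}(\delta)^{sq}$, fix $p,y,w$ and $\kappa,\lambda,n,u$ admissible for $\tilde q$, a phase $f\in\mathbf{F}(w,y,\kappa,\lambda,u,\omega,\omega')$, a Schwarz $h$, and $0<B\leqslant N$ with $N=p^{n-w-\kappa-\iota'}$. The first clauses of the three maxima defining $\tilde\kappa_0,\tilde n_0,\tilde u_0$ give precisely $\kappa\geqslant 1+\iota'(4)$, $n-w>\kappa+\iota'+\iota'(12)$, and $u+\lfloor\lambda\rfloor>\kappa+\iota'$, and $\tilde\lambda>0$ is automatic; this is \eqref{BProcessCondition1}, so Theorem~\ref{SummationFormula} applies and writes $\sum_m e(f(m)/p^n)h(m/B)$ as $\epsilon B N^{-1/2}$ times the dual sum $\sum_t e(\mathring f(t)/p^n)\,\hat h_{f,B}(t/(N/B))$, where $\mathring f\in\mathbf{F}\big(w+\ord_py,\,y^{-1},\,\kappa,\,\tilde\lambda,\,u+\varepsilon_\lambda-\ord_py,\,\omega'{}^{-1},-\omega^{-1}\big)$ and $\hat h_{f,B}$ is a reflected translate of $\hat h$.

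Now I would feed the dual sum to $q$ --- more precisely to its smooth form $H^\sharp_{\textnormal{sm}}(\delta)$, valid by Lemma~\ref{EquivalenceSharpSmooth} --- at modulus $p^n$, parameter $y^{-1}$, and length $\tilde B:=N/B\in[1,N]$. The remaining clauses of the maxima render $\mathring f$ admissible for $q$: $\kappa\geqslant\kappa_0(y^{-1},p)$; $\tilde\lambda\geqslant\lambda_0(y^{-1},p)$, since $\lambda\geqslant\lambda_0(y^{-1},p)$ and $\kappa-\rho_p(y)\geqslant\lambda_0(y^{-1},p)$ by the definitions of $\tilde\lambda_0$ and $\tilde\kappa_0$; $n-w\geqslant\ord_py+n_0(y^{-1},p,\kappa,\tilde\lambda)$; and $u+\varepsilon_\lambda-\ord_py\geqslant u_0(y^{-1},p,\kappa,\tilde\lambda)$. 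Crucially, the effective length that $q$ sees for $\mathring f$ is again $N$, because
\[ (w+\ord_py)+\kappa+\iota'(y^{-1})=w+\kappa+\iota'(y) \]
(using $\iota'(y^{-1})=\iota(y)$ and $\ord_py=\iota'(y)-\iota(y)$), so $\tilde B=N/B$ lies in the admissible range $[1,N]$. Since $\|\cdot\|_\star$ is invariant under translations and reflections, $\|\hat h_{f,B}\|_\star=\|\hat h\|_\star$, and $H^\sharp_{\textnormal{sm}}(\delta)$ for $q$ gives
\[ \sum_t e\!\left(\frac{\mathring f(t)}{p^n}\right)\hat h_{f,B}\!\left(\frac t{\tilde B}\right)\ll\|\hat h\|_\star\,p^{\,r(y^{-1},p,\kappa,\tilde\lambda)}\Big(\frac N{\tilde B}\Big)^{k}\tilde B^{\ell}(\log N)^{\delta}. \]
Multiplying by $\epsilon B N^{-1/2}$ and substituting $\tilde B=N/B$, the elementary identity $\frac{B}{N^{1/2}}\,B^{k}(N/B)^{\ell}=(N/B)^{\ell-1/2}B^{k+1/2}$ turns this into the desired bound
\[ \sum_m e\!\left(\frac{f(m)}{p^n}\right)h\!\left(\frac mB\right)\ll\|\hat h\|_\star\,p^{\tilde r}\Big(\frac NB\Big)^{\ell-1/2}B^{k+1/2}(\log N)^{\delta}, \]
with $\tilde r=r(y^{-1},p,\kappa,\tilde\lambda)$ and $c(h)=\|\hat h\|_\star$ depending only on $h$; this is $H_{\textnormal{sm}}(\delta)$ for $\tilde{\mathcal C}$. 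For the ``$sq$'' addendum, apply the square-root lower bound of $q$ with block length $\tilde B=N/B$ and multiply by $BN^{-1/2}$: the same algebraic identity yields $p^{\tilde r}(N/B)^{\ell-1/2}B^{k+1/2}(\log N)^{\delta}\gg B^{1/2}$.

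Both analytic engines --- the Summation Formula and Lemma~\ref{EquivalenceSharpSmooth} --- are already in hand, so the only substantive work is the bookkeeping, and that is where I expect the main obstacle to lie: one must match, clause by clause, the hypotheses \eqref{BProcessCondition1} and the admissibility of the dualized phase $\mathring f$ under $q$ against the three-fold maxima defining $\tilde\kappa_0,\tilde n_0,\tilde u_0$, carrying along the substitution $y\mapsto y^{-1}$, the shifts $w\mapsto w+\ord_py$, $\lambda\mapsto\tilde\lambda$, $u\mapsto u+\varepsilon_\lambda-\ord_py$, and above all the conductor identity $(w+\ord_py)+\kappa+\iota'(y^{-1})=w+\kappa+\iota'(y)$ that keeps the effective length $N$ invariant under dualization (and is precisely what makes $B(k,\ell)=(\ell-\tfrac12,k+\tfrac12)$ coincide formally with the Archimedean $B$-process). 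No single step is deep, but the accounting is unforgiving: a stray floor, ceiling, or sign breaks the chain.
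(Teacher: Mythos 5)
Your proposal is correct and follows essentially the same route as the paper's own proof: dualize via Theorem~\ref{SummationFormula}, apply the given datum in smooth form ($H^{\sharp}_{\text{sm}}(\delta)$ of Lemma~\ref{EquivalenceSharpSmooth}) to the dual sum, use the translation-invariance of $\|\cdot\|_{\star}$ and the algebra $B N^{-1/2}(N/\tilde B)^k \tilde B^{\ell} = (N/B)^{\ell-1/2}B^{k+1/2}$ with $\tilde B=N/B$, and return to sharp cutoffs via $H_{\text{sm}}(\delta)^{sq}\Rightarrow H(\delta+\delta_{1/2})^{sq}$. The conductor identity $(w+\ord_p y)+\kappa+\iota'(y^{-1})=w+\kappa+\iota'(y)$ and the clause-by-clause matching of the maxima in $\tilde\kappa_0,\tilde\lambda_0,\tilde u_0,\tilde n_0$ against \eqref{BProcessCondition1} and \eqref{BProcessCondition2} are exactly the bookkeeping the paper performs, merely written out more explicitly on your part.
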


\begin{proof}
Let $f\in\mathbf{F}(w,y,\kappa,\lambda,u,\omega,\omega')$ and $0<B\leqslant p^{n-w-\iota'-\kappa}$ be given. Fix a Schwarz function $h\in C^{\infty}_0(\mathbb{R})$, and consider the sum
\[ S=\sum_{m\in\mathbb{Z}}e\left(\frac{f(m)}{p^n}\right)h\left(\frac mB\right), \]
with an eye to invoking Lemma~\ref{EquivalenceSharpSmooth}. According to Theorem~\ref{SummationFormula}, assuming that conditions \eqref{BProcessCondition1} hold, we have that
\[ S=\frac{\epsilon B}{p^{(n-w-\iota'-\kappa)/2}}\sum_{t\in\mathbb{Z}}e\left(\frac{\mathring{f}(t)}{p^n}\right)\hat{h}_{f,B}\left(\frac{t}{p^{n-w-\iota'-\kappa}/B}\right), \]
with $\hat{h}_{f,B}$ as in Theorem~\ref{SummationFormula}, $|\epsilon|=1$, and
\[ \mathring{f}\in\mathbf{F}\big(w+\ord_py,y^{-1},\kappa,\tilde{\lambda},u+\varepsilon_{\lambda}-\ord_py,\omega'{}^{-1},-\omega^{-1}\big). \]
We now estimate the sum on the right-hand side using the given $p$-adic exponent datum and Lemma~\ref{EquivalenceSharpSmooth}. We note that, importantly, the cutoff function $\hat{h}_{f,B}$ satisfies $\|\hat{h}_{f,B}\|_{\star}=\|\hat{h}\|_{\star}$, where $\|\cdot\|_{\star}$ is the (translation-invariant) quantity defined in \eqref{DefinitionHStar} which enters the condition $H_{\text{sm}}^{\sharp}(\delta)$ in Lemma~\ref{EquivalenceSharpSmooth}. The given exponent datum can be directly applied as long as
\[ 1\leqslant p^{n-w-\iota'-\kappa}/B\leqslant p^{(n-w-\iota'+\iota-\kappa)-\iota}, \]
which is trivially satisfied, and
\begin{equation}
\label{BProcessCondition2}
\begin{gathered}
n-w-\text{ord}_py\geqslant n_0\big(y^{-1},p,\kappa,\tilde{\lambda}\big),\quad\kappa\geqslant\kappa_0\big(y^{-1},p\big),\\ \tilde{\lambda}=\min\big(\kappa-\rho_p(y),\lambda\big)\geqslant\lambda_0\big(y^{-1},p\big),\quad u+\lfloor\lambda\rfloor-\lceil\tilde{\lambda}\rceil-\text{ord}_py\geqslant u_0\big(y^{-1},p,\kappa,\tilde{\lambda}\big).
\end{gathered}
\end{equation}
We thus find that
\begin{align*}
S&=\frac{\epsilon B}{p^{(n-w-\iota'-\kappa)/2}}\sum_{t\in\mathbb{Z}}e\left(\frac{\mathring{f}(t)}{p^n}\right)\hat{h}_{f,B}\left(\frac t{p^{n-w-\iota'-\kappa}/B}\right)\\
&\ll\|\hat{h}_{f,B}\|_{\star}\cdot\frac{B}{p^{(n-w-\iota'-\kappa)/2}}p^r\left(\frac{p^{(n-w-\iota'+\iota)-\kappa-\iota}}{p^{n-w-\iota'-\kappa}/B}\right)^k\left(\frac{p^{n-w-\iota'-\kappa}}B\right)^{\ell}\times\\
&\qquad\qquad\times\big(\log p^{(n-w-\iota'+\iota)-\kappa-\iota}\big)^{\delta}\\
&=\|\hat{h}\|_{\star}\cdot p^{r-(\iota'+\kappa)\ell+(\iota'+\kappa)/2}B^{1+k-\ell}\big(p^{n-w}\big)^{\ell-1/2}\big(\log p^{n-w-\kappa-\iota'}\big)^{\delta}\\
&=\|\hat{h}\|_{\star}\cdot p^{\tilde{r}}\left(\frac{p^{n-w-\kappa-\iota'}}B\right)^{\ell-1/2}B^{k+1/2}\big(\log p^{n-w-\kappa-\iota'}\big)^{\delta},
\end{align*}
with
\[ \tilde{r}=\tilde{r}(y,p,\kappa,\lambda)=r\big(y^{-1},p,\kappa,\tilde{\lambda}\big). \]

We now apply Lemma~\ref{EquivalenceSharpSmooth} again. Since the estimate proved above holds with a constant depending on the cutoff function $h$ only, we have according to the implication $H_{\text{sm}}(\delta)^{sq}\implies H(\delta+\delta_{1/2})^{sq}$ that, for every $M\in\mathbb{Z}$ and every $0<B\leqslant p^{n-w-\kappa-\iota'}$,
\[ \sum_{M<m\leqslant M+B}e\left(\frac{f(m)}B\right)\ll p^{\tilde{r}(y,p)}\left(\frac{p^{n-w-\kappa-\iota'}}B\right)^{\ell-1/2}B^{k+1/2}\big(\log p^{n-w-\kappa-\iota'}\big)^{\tilde{\delta}}, \]
with $\tilde{\delta}=\delta+\delta_{01}$ (with $\delta_{01}$ equal to the $\delta_{1/2}$ applied to the pair $(\ell-\tfrac12,k+\tfrac12)$) and a uniform implied constant. This estimate is valid as long as all conditions listed in \eqref{BProcessCondition1} and \eqref{BProcessCondition2} are satisfied; this gives us the $p$-adic exponent datum announced in the statement of the Theorem.
\end{proof}

In particular, applying Theorem~\ref{Bprocess} to the trivial $p$-adic exponent datum
\[ \omega_{01}=(0,1,0,0,(\kappa+\iota'+1,1,1+\iota'(2),\rho_p)), \]
we obtain the following important datum:
\begin{equation}
\label{ExponentDatumHalfHalf}
\omega_{1/2}=\Big(\tfrac12,\tfrac12,0,1,\big(\kappa+\iota'+1+\iota'(12),\max(\kappa-\lfloor\lambda\rfloor+\iota'+1,1),1+\iota'(4),\rho_p\big)\Big).
\end{equation}

\section{\texorpdfstring{$A$-process}{A-process}}

$A$-process relies on a procedure in which an estimate on the exponential sum \eqref{GeneralSum} is obtained by comparing it to sums obtained by replacing $f$ with its differences over pairs of points in appropriate $p$-adic neighborhoods (see \eqref{ResultOfWeylDifferencing} below); this has the effect of considerably reducing the modulus relative to the length of the summation. This estimate can be seen as an adaptation of the classical Weyl-van der Corput inequality. For clarity, we state the underlying inequality separately and in some generality.

\begin{lemma}
\label{WeylDifferencing}
Let $b:\mathbb{Z}\to\mathbb{C}$ be an arbitrary function such that $|b(t)|\ll 1$ for every $t\in\mathbb{Z}$. Let $M\in\mathbb{Z}$ and $B\in\mathbb{N}$, and let \[ S=\sum_{M<m\leqslant M+B}b(m). \]
Then, for every positive integer $0<H\leqslant B$,
\[ S^2\ll BH+H\sum_{0<|h|<\frac BH}\left|\sum_{m\in J(h)}b(m+hH)\overline{b(m)}\right|, \]
where
\[ J(h)=(M,M+B-hH]\cap (M-hH,M+B] \]
is an interval of length $|J(h)|=B-|h|H\leqslant B$.
\end{lemma}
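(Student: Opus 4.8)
\emph{Proof proposal.} The plan is to run the classical Weyl--van der Corput maneuver, differencing in steps of size $H$ rather than $1$. First extend $b$ to all of $\mathbb{Z}$ by setting $b(m)=0$ for $m\notin(M,M+B]$, so that $S=\sum_{m\in\mathbb{Z}}b(m)$, and put $K=\lceil B/H\rceil$; since $1\leqslant H\leqslant B$ we have $K\geqslant 1$ and $(K-1)H<B\leqslant KH$, so $K\asymp B/H$. Writing each of the $K$ copies of $S$ as $\sum_{n\in\mathbb{Z}}b(n+kH)$ by shifting the summation variable, and exchanging the finite sum over $k$ with the (finitely supported) sum over $n$, one gets the identity
\[ KS=\sum_{n\in\mathbb{Z}}\ \sum_{0\leqslant k<K}b(n+kH). \]
The inner sum over $k$ is nonzero only for $n$ lying in $(M-(K-1)H,\,M+B]$, an interval of length $B+(K-1)H<2B$ containing $\ll B$ integers.

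Next I would apply Cauchy--Schwarz in $n$ over this support, then expand the square and reindex. This gives
\[ K^2|S|^2\ll B\sum_{n\in\mathbb{Z}}\Bigl|\sum_{0\leqslant k<K}b(n+kH)\Bigr|^2 =B\sum_{0\leqslant k,k'<K}\ \sum_{n\in\mathbb{Z}}b(n+kH)\overline{b(n+k'H)}. \]
Substituting $m=n+k'H$ in the inner sum shows $\sum_{n}b(n+kH)\overline{b(n+k'H)}=\sum_{m}b(m+jH)\overline{b(m)}$ depends only on $j=k-k'$, and there are $K-|j|$ pairs $(k,k')$ with $k-k'=j$; hence
\[ K^2|S|^2\ll B\sum_{|j|<K}(K-|j|)\sum_{m\in\mathbb{Z}}b(m+jH)\overline{b(m)}. \]
The diagonal term $j=0$ contributes $\ll K\sum_m|b(m)|^2\ll KB$ since $|b|\ll 1$ and $(M,M+B]$ has $\leqslant B$ integers; for $j\neq 0$ I bound $K-|j|\leqslant K$ and pass to absolute values. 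Dividing by $K^2$ and using $B/K\leqslant H$ (so $B^2/K\leqslant BH$) yields $|S|^2\ll BH+H\sum_{0<|j|<K}\bigl|\sum_{m}b(m+jH)\overline{b(m)}\bigr|$. Finally, the summand $b(m+hH)\overline{b(m)}$ is supported on $m\in(M,M+B]\cap(M-hH,M+B-hH]$, which is exactly the interval $J(h)$ of the statement (with $|J(h)|=B-|h|H\leqslant B$), and this interval is empty as soon as $|h|H\geqslant B$; thus the $j$-range $0<|j|<K$ may be replaced by $0<|h|<B/H$ without changing the sum, giving the asserted inequality.

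The argument is essentially a combinatorial rearrangement with one application of Cauchy--Schwarz, so there is no real analytic obstacle. The only points needing care are bookkeeping: choosing $K$ so that simultaneously $K\asymp B/H$, the $n$-support has length $\ll B$, and (after discarding the automatically vanishing terms with $|j|H\geqslant B$) the surviving range $0<|j|<K$ matches $0<|h|<B/H$ — the choice $K=\lceil B/H\rceil$ achieves all three; and the short interval-intersection check that $J(h)$ is precisely the common support of $b(\cdot+hH)$ and $b$, handled uniformly for $h>0$ and $h<0$.
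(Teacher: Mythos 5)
Your proof is correct and follows essentially the same Weyl–van der Corput differencing route as the paper: average over $\asymp B/H$ shifts of step $H$, apply Cauchy–Schwarz, expand and reindex by the difference of shift indices. The only real difference is bookkeeping — you use a fixed shift set $\{0,\dots,K-1\}$ with $K=\lceil B/H\rceil$ and the exact identity $KS=\sum_n\sum_k b(n+kH)$, whereas the paper sums $h$ over an $m$-dependent real interval $I(m)$ of length $B/H$ and incurs an $O(B)$ error in the smoothing identity; your version is slightly cleaner but not a different method.
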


\begin{proof}
Let $I(m)$ be an interval of the real axis depending on $m\in\mathbb{Z}$ defined as
\[ I(m)=\{t\in\mathbb{R}:M<m+tH\leqslant M+B\}=\left(\frac{M-m}H,\frac{M+B-m}H\right]. \]
Note that $|I(m)|=B/H$ for every $m\in\mathbb{Z}$. We can adapt Weyl's ``smoothing'' trick to write
\begin{align*}
&\sum_{M-H<m\leqslant M+B}\sum_{h\in I(m)}b(m+hH)\\
&\qquad=\sum_{M<m\leqslant M+B}b(m)\cdot\#\bigg\{(m_1,h):\begin{matrix}M-H<m_1\leqslant M+B,\\ h\in I(m_1),\,\,m=m_1+hH \end{matrix}\bigg\}\\
&\qquad=\sum_{M<m\leqslant M+B}b(m)\cdot\#\{h\in\mathbb{Z}:M-H<m-hH\leqslant M+B\}\\
&\qquad=\sum_{M<m\leqslant M+B}b(m)\left(\frac BH+\text{O}(1)\right)=\frac BHS+\text{O}(B).
\end{align*}
The second equality follows from an observation that, given $m\in(M,M+B]$, $m_1\in(M-H,M+B]$, and $h\in\mathbb{Z}$ such that $m=m_1+hH$, the condition that $h\in I(m_1)$ is automatically satisfied.

Applying the Cauchy-Schwarz inequality, we have that
\begin{align*}
\frac{B^2}{H^2}S^2
&\ll\left|\sum_{M-H<m\leqslant M+B}\sum_{h\in I(m)}b(m+hH)\right|^2+B^2\\
&\ll B\sum_{M-H<m\leqslant M+B}\left|\sum_{h\in I(m)}b(m+hH)\right|^2+B^2\\
&\ll\frac{B^3}{H}+B\sum_{M-H<m\leqslant M+B}\mathop{\sum\sum}\limits_{h_1,h_2\in I(m),\,\,h_1\neq h_2}b(m+h_1H)\overline{b(m+h_2H)}+B^2\\
&\ll\frac{B^3}{H}+B\mathop{\sum\sum\sum}_{\substack{M-H<m\leqslant M+B,\,\,0<|h|<\frac BH,\\g\in I(m),\,\,g+h\in I(m)}}b\big((m+gH)+hH)\overline{b(m+gH)}\\
&=\frac{B^3}H+B\sum_{0<|h|<\frac BH}\sum_{-\frac BH<g<\frac BH+1}\sum_{m\in J(h)}b(m+hH)\overline{b(m)}\\
&\ll\frac{B^3}H+\frac{B^2}H\sum_{0<|h|<\frac BH}\left|\sum_{m\in J(h)}b(m+hH)\overline{b(m)}\right|,
\end{align*}
with $J(h)$ as in the statement of the Lemma. This gives the desired inequality.
\end{proof}

The condition that $|b(t)|\ll 1$ is not essential and was introduced only with our application in mind. Following the proof practically verbatim with an extra application of the Cauchy-Schwarz inequality to estimate the error term from the smoothing, one can prove that, for every function $b:\mathbb{Z}\to\mathbb{C}$, we have
\[ S^2\ll H\sum_{0\leqslant |h|<\frac BH}\left|\sum_{m\in J(h)}b(m+hH)\overline{b(m)}\right|, \]
with the term $h=0$ accounting for the diagonal contribution; the statement of the lemma follows trivially when $|b(t)|\ll 1$.

We will use Lemma~\ref{WeylDifferencing} with $b(t)=e(f(t)/p^n)$ and with $H=p^{\chi}$ chosen as a power of $p$. The estimate we just proved reads as
\begin{equation}
\label{ResultOfWeylDifferencing}
S^2\ll BH+H\sum_{0<|h|<\frac BH}\left|\sum_{m\in J(h)}e\left(\frac{f(m+p^{\chi}h)-f(m)}{p^n}\right)\right|.
\end{equation}
In the following lemma, we consider the function appearing in the inner exponential sum.

\begin{lemma}
\label{AprocessClassesOfFunctions}
Let $f\in\mathbf{F}(w,y,\kappa,\lambda,u,\omega,\omega')$, and assume that
\[ u>\kappa-\lfloor\lambda\rfloor+\iota'\quad\text{and}\quad\lambda+\chi>\rho_p. \]
Let $\tilde{\varepsilon}(y)=1$ if $\textnormal{ord}_py<0$ and $\tilde{\varepsilon}(y)=0$ otherwise, and let
\[ \tilde{\lambda}=\min(\kappa-\tilde{\varepsilon}(y)\rho_p,\lambda),\quad \mu=\min\big(2\kappa+\iota'-\iota'(2)-\tilde{\varepsilon}(y),u+\lfloor 2\lambda-\rho_p\rfloor\big). \]
Let $\chi\geqslant 0$, $h\in\mathbb{Z}^{\times}_p$ be fixed. Then there exists a power series $g_1\in\mathbf{I}_0[\tilde{\lambda}](\mathbb{Z}_p)$ with $g_1'\in p^{\mu}\mathbf{I}_0[\tilde{\lambda}](\mathbb{Z}_p)$ such that the equality
\[ f_{\chi,h}(t):=f\big(t+p^{\chi}h\big)-f(t)=p^{\chi}hf'(t)+p^{2\chi+w}g_1(t) \]
holds for all $|t|_p<p^{\tilde{\lambda}}$. In particular,
\begin{align*}
f_{\chi,h}\in\mathbf{F}\Big(w+\chi&+\kappa+\iota',y+1,\kappa,\tilde{\lambda},\\
&\min\big(u+\lfloor\lambda\rfloor-\kappa-\iota',\chi+\kappa-\iota'(2)-\varepsilon(y)\big),\omega,\omega\omega'(-y)|y|_ph\Big).
\end{align*}

\end{lemma}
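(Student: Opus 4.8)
The plan is to compute $f_{\chi,h}$ directly from the Taylor expansion \eqref{TaylorSeriesValues} and then verify, term by term, that the resulting power series fits the template of Definition~\ref{ClassesDefinition} with the advertised parameters. First I would write
\[ f_{\chi,h}(t)=f\big(t+p^{\chi}h\big)-f(t)=p^{\chi}hf'(t)+\sum_{r\geqslant 2}\frac{f^{(r)}(t)}{r!}p^{r\chi}h^r, \]
valid as an equality of values wherever numerical substitution is allowed (in particular for $t\in p\mathbb{Z}_p$, and more generally for $|t|_p<p^{\tilde\lambda}$ by the radius-of-convergence bounds recalled in Section~\ref{Preliminaries}), and set $p^{2\chi+w}g_1(t)$ equal to the tail sum; so $g_1(t)=p^{-2\chi-w}\sum_{r\geqslant 2}\frac{f^{(r)}(t)}{r!}p^{r\chi}h^r$. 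The first task is to show $g_1\in\mathbf{I}_0[\tilde\lambda](\mathbb{Z}_p)$ with $g_1'\in p^{\mu}\mathbf{I}_0[\tilde\lambda](\mathbb{Z}_p)$. For this I would use the formula \eqref{RthDerivative} for $f^{(r)}(t)p^{-w}$, which splits $f^{(r)}$ into a "leading" piece $\omega'(-y)_{r-1}(p^{\iota+\kappa}\omega)^{r-1}(1+p^{\iota+\kappa}\omega t)^{-y-r+1}$ and a "secondary" piece $p^u g^{(r-1)}(t)$. For the leading piece one has the $p$-adic power series in $\mathbf{I}^1_{\iota'+\kappa}[\kappa-\rho_p(y)](\mathbb{Z}_p)$ (from the discussion of $\pi^y_{[\kappa+\iota]}$), so after multiplying by $(p^{\iota+\kappa}\omega)^{r-1}/r!\cdot p^{r\chi}h^r$ and dividing by $p^{2\chi+w}$, the $p$-power gained for $r\geqslant 2$ is at least $(r-1)\kappa+(r-2)\chi+\iota'-\ord_pr!\geqslant(r-1)(\kappa-\rho_p)+(r-2)\chi$ — enough to land in $\mathbf{I}_0[\tilde\lambda]$ once one checks $\tilde\lambda\leqslant\kappa-\tilde\varepsilon(y)\rho_p$ and uses $\lambda+\chi>\rho_p$ to control convergence; for the secondary piece one uses $g\in\mathbf{I}_0[\lambda]$, which gives $g^{(r-1)}/( (r-1)!)\in\mathbf{I}_{0,r-1}[\lambda]$, so the extra factor $p^u$ plus the $\chi$-powers yield membership in $\mathbf{I}_0[\tilde\lambda]$ under the stated hypothesis $u>\kappa-\lfloor\lambda\rfloor+\iota'$ and $\lambda+\chi>\rho_p$. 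Differentiating $g_1$ shifts every index up by one, gaining one more factor of $p^{\kappa}$ in the leading part and keeping the $p^u$ in the secondary part; matching the two gives precisely the valuation $\mu=\min(2\kappa+\iota'-\iota'(2)-\tilde\varepsilon(y),\,u+\lfloor2\lambda-\rho_p\rfloor)$, where the $-\iota'(2)$ comes from the $r=2$ term $\tfrac12 f''(t)$ and the $-\tilde\varepsilon(y)$ bookkeeping from the case $\ord_py<0$.

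Once $g_1$ is understood, I would assemble the class membership of $f_{\chi,h}$. Differentiating the defining identity gives
\[ f_{\chi,h}'(t)=p^{\chi}h f'(t)+p^{2\chi+w}g_1'(t)=p^{\chi}h\Big(p^w\omega'(1+p^{\iota+\kappa}\omega t)^{-y}+p^w\gamma_0+p^{u+w}g(t)\Big)+p^{2\chi+w}g_1'(t). \]
The new leading term should be of the form $p^{w'}\tilde\omega'(1+p^{\iota(y+1)+\kappa}\tilde\omega t)^{-(y+1)}$. The key algebraic step is to differentiate $(1+p^{\iota+\kappa}\omega t)^{-y}$, obtaining $-y p^{\iota+\kappa}\omega(1+p^{\iota+\kappa}\omega t)^{-y-1}$; since $\ord_p((1+p^{\iota+\kappa}\omega t)^{-y})$-type normalizations and $|y|_p$ enter, the natural normalized unit is $\tilde\omega'=\omega\omega'(-y)|y|_p h\in\mathbb{Z}_p^{\times}$ and the new exponent base is $\tilde\omega=\omega$ (after re-expressing $1+p^{\iota+\kappa}\omega t$ as $1+p^{\iota(y+1)+\kappa}\omega t$, which works because $\iota(y+1)=\iota(y)$ when $\ord_py<0$ and is consistent otherwise up to the $\varepsilon(y)/\tilde\varepsilon(y)$ adjustments — this is exactly what the $\iota$-normalization in Definition~\ref{ClassesDefinition} is designed to make clean). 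The new weight is $w+\chi+\kappa+\iota'$: one factor $p^{\chi}h$, one $p^w$, and $p^{\iota+\kappa}$ from the derivative, with $\iota$ absorbed into $w$ as $\iota'-\ord_py$ bookkeeping — I would track this carefully. The remaining terms $p^{\chi}h p^w\gamma_0$ is a new constant $p^{w'}\gamma_0'$, and $p^{\chi}h p^{u+w}g(t)+p^{2\chi+w}g_1'(t)$ must be absorbed into the new $p^{u'+w'}g_{\mathrm{new}}(t)$ with $g_{\mathrm{new}}\in\mathbf{I}_0[\tilde\lambda]$: comparing $p$-powers against the new weight $w'=w+\chi+\kappa+\iota'$ gives $u'=\min(u+\lfloor\lambda\rfloor-\kappa-\iota',\,\chi+\kappa-\iota'(2)-\varepsilon(y))$ as claimed, the first term coming from $g$ (using $g\in\mathbf{I}_0[\lambda]$, so one "borrows" $\lfloor\lambda\rfloor$ powers of $p$ to rewrite $g(t)$ in the $[\tilde\lambda]$-normalization) and the second from $g_1'\in p^{\mu}\mathbf{I}_0[\tilde\lambda]$.

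The main obstacle, I expect, is the meticulous $p$-adic bookkeeping around the distinction between $\iota(y)$ and $\iota'(y)$ and the correct identification of $\iota(y+1)$, $\iota'(y+1)$, especially at the small primes and when $\ord_py\neq0$ — this is exactly the kind of "pain" the paper warns about in Section~\ref{Preliminaries}. The parameters $\tilde\varepsilon(y)$, $\varepsilon(y)$, and the various floor/ceiling operations in $\mu$ and in $u'$ are there precisely to make every inequality tight, and getting them all simultaneously correct (while invoking Lemma~\ref{RaiseToPower} and Lemma~\ref{PowerSeriesTaylorExpansion} to handle the compositions $(1+p^{\iota+\kappa}\omega t)^{-y-r+1}$ and $g^{(r-1)}$ uniformly in $r$) is the delicate part. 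The convergence of the tail sum defining $g_1$ — i.e. that $|a_{rk}|_p|t|_p^k\to0$ as $\max(r,k)\to\infty$ for $|t|_p<p^{\tilde\lambda}$, which is what makes the double-sum rearrangement legitimate — also needs the hypothesis $\lambda+\chi>\rho_p$ and should be stated explicitly; it is routine but must not be skipped. Everything else is bookkeeping of the shape already rehearsed in the proofs of Lemma~\ref{TranslationInvariance} and Lemma~\ref{ImplicitFunctionTheorem}.
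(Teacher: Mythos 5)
Your overall approach is the same as the paper's: expand $f_{\chi,h}$ by the Taylor series \eqref{TaylorSeriesValues}, isolate the tail $\sum_{r\geqslant2}f^{(r)}(t)p^{r\chi}h^r/r!$ as $p^{2\chi+w}g_1(t)$, split $f^{(r)}$ into a leading binomial piece and a secondary $p^u g^{(r-1)}$ piece via \eqref{RthDerivative}, and bound $p$-adic valuations of coefficients. The paper does exactly this, except it writes out the double-indexed coefficients $a_{rj}$ explicitly, carrying the factor $(-y-1)_{j+r-2}$ through the computation rather than invoking class membership of $(1+p^{\iota+\kappa}\omega t)^{-y-r+1}$ as you do; this matters because the latter's membership in $\mathbf{I}^1_{\ast}[\ast]$ depends on $\iota'(-y-r+1)$, which varies with $r$, whereas the direct Pochhammer bound sidesteps that. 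Your account of that step is therefore not wrong in spirit but needs to be carried out at the coefficient level to be rigorous.

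There is, however, a genuine slip in the verification of the ``in particular'' clause. You write $f_{\chi,h}'(t)=p^{\chi}h\,f'(t)+p^{2\chi+w}g_1'(t)$, but differentiating the defining identity $f_{\chi,h}(t)=p^{\chi}h\,f'(t)+p^{2\chi+w}g_1(t)$ gives
\[
f_{\chi,h}'(t)=p^{\chi}h\,f''(t)+p^{2\chi+w}g_1'(t),
\]
with $f''$, not $f'$, in the first term. As a consequence your intermediate display retains a $p^{\chi}h\,p^w\gamma_0$ term, but $\gamma_0$ has already been killed by the differentiation; the new constant $\gamma_0'$ in the target form must instead arise from $g'(0)$ and $g_1'(0)$, normalized against $p^{w'}$ with $w'=w+\chi+\kappa+\iota'$, and one should check it lands in $\mathbb{Z}_p$ (it does, using $u>\kappa-\lfloor\lambda\rfloor+\iota'$ and $\kappa\geqslant1+\iota'(2)$). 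Your subsequent prose (``the key algebraic step is to differentiate $(1+p^{\iota+\kappa}\omega t)^{-y}$, obtaining $-yp^{\iota+\kappa}\omega(1+p^{\iota+\kappa}\omega t)^{-y-1}$'') shows you know the right move, so this reads as a transcription slip; but as written the display is incorrect and would lead a careful reader astray. For what it is worth, the paper's own proof stops once $g_1\in\mathbf{I}_0[\tilde\lambda](\mathbb{Z}_p)$ and $g_1'\in p^{\mu}\mathbf{I}_0[\tilde\lambda](\mathbb{Z}_p)$ are established and leaves the class membership of $f_{\chi,h}$ to the reader, so your attempt to spell it out is the right instinct; it just needs $f''$ in the right place. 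One last small point: $\iota(y+1)=\iota(y)$ holds for \emph{all} $y\in\mathbb{Q}^{+}$, not only when $\textnormal{ord}_py<0$, which is what makes the normalization in the leading term come out cleanly without further adjustment.
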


\begin{proof}
Since $r_f=r_{f'}\geqslant p^{\tilde{\lambda}}$, we have according to \eqref{TaylorSeriesValues} the equality of values
\[ f\big(t+p^{\chi}h\big)-f(t)=p^{\chi}hf'(t)+p^{2\chi}h^2\sum_{r=2}^{\infty}p^{(r-2)\chi}h^{r-2}\frac{f^{(r)}(t)}{r!} \]
for every $|t|_p<p^{\tilde{\lambda}}$.

We now consider the infinite sum of the series on the right-hand side as a formal sum. With $g(t)$ as in \eqref{ClassFDefinition} and writing $g'(t)=\sum_{j=0}^{\infty}g_jt^j\in\mathbf{I}_{0,1}[\lambda](\mathbb{Z}_p)$, the coefficient of the $r^{\text{th}}$ series with $t^j$ ($j\geqslant 0$) equals
\begin{align*}
a_{rj}&=\frac{h^{r-2}p^{w+(r-2)\chi}}{r!}\times\\
&\qquad\times\Big(\omega'\omega^{j+r-2}p^{\kappa+\iota'+(\iota+\kappa)(j+r-2)}\frac{(-y-1)_{j+r-2}}{j!}+p^ug_{j+r-2}(j+r-2)_{r-2}\Big).
\end{align*}
It follows that
\begin{align*}
\text{ord}_p(a_{rj})&\geqslant w+(r-2)\chi-\text{ord}_p(r!)\\
&\hphantom{{}\geqslant w}+\min\big(\kappa(j+r-1)-\tilde{\varepsilon}(y)\text{ord}_p(j!)+\iota',u+\lceil\lambda(j+r-1)\rceil\big)\\
&\geqslant w+\min\big(\lceil(\kappa+\chi-\rho_p)(r-2)-\rho_p\rceil+\lceil(\kappa-\tilde{\varepsilon}(y)\rho_p)j\rceil+\kappa+\iota',\\
&\mskip 150mu u+\lceil\lambda(j+1)+(r-2)(\lambda+\chi-\rho_p)-\rho_p\rceil\big).
\end{align*}
According to our discussion in \eqref{DoubleExchangeCriterion}, since $\min(\kappa,\lambda)+\chi>\rho_p$, the formal sum of power series converges to a power series $\tilde{g}_1(t)=\sum_{j=0}^{\infty}\tilde{g}_jt^j$ with
\[ \text{ord}_p\tilde{g}_j\geqslant w+\min(\lceil(\kappa-\tilde{\varepsilon}(y)\rho_p)j\rceil+\kappa+\iota'-\iota'(2),u+\lceil\lambda(j+1)-\rho_p\rceil); \]
moreover, we have that $\tilde{g}_1(t)\in p^w\mathbf{I}_0[\tilde{\lambda}](\mathbb{Z}_p)$, and the pointwise equality of values holds for all $|t|_p<p^{\tilde{\lambda}}$. Hence we can take $g_1(t)=h^2p^{-w}\tilde{g}_1(t)$. We also have that
\begin{align*}
\text{ord}_p((j+1)\tilde{g}_{j+1})\geqslant w+\min\big(\lceil(\kappa-\tilde{\varepsilon}(y)\rho_p)j\rceil+2\kappa+\iota'&-\iota'(2)-\tilde{\varepsilon}(y),\\
&u+\lceil\lambda(j+2)-\rho_p\rceil\big),
\end{align*}
so that $g_1'(t)\in p^\mu\mathbf{I}_0[\tilde{\lambda}](\mathbb{Z}_p)$ with $\mu=\min\big(2\kappa+\iota'-\iota'(2)-\tilde{\varepsilon}(y),u+\lfloor 2\lambda-\rho_p\rfloor\big)$, as announced.
\end{proof}

The following theorem establishes the $p$-adic $A$-process. Its statement may appear somewhat frightening, but this is due to our desire to work in full generality. We will see in section~\ref{LFunctionSection} how one obtains very concrete and easy to work with exponent data as long as one stays away from a finite number of primes and makes a concrete choice of $\kappa$. To keep the expressions manageable, we write $g(y^{\pm})$ to denote $\max(g(y),g(y^{-1}))$.

\begin{theorem}[$A$-process]
\label{Aprocess}
If $\big(k,\ell,r,\delta,(n_0,u_0,\kappa_0,\lambda_0)\big)$ is a $p$-adic exponent datum, then
\[ A\big(k,\ell,r,\delta,(n_0,u_0,\kappa_0,\lambda_0)\big)=\left(\frac k{2(k+1)},\frac{k+\ell+1}{2(k+1)},\tilde{r},\tilde{\delta},(\tilde{n}_0,\tilde{u}_0,\tilde{\kappa}_0,\tilde{\lambda}_0)\right) \]
is also a $p$-adic exponent datum.

Here, if $0<k\leqslant\frac12\leqslant\ell<1$, then, denoting $\tilde{\lambda}=\min\big(\kappa-\rho_p(y),\lambda\big)$,
\[ \tilde{r}(y,p,\kappa,\lambda)=\frac{r+k\big(1-\kappa-\min(\iota'(y+1),\iota'(y^{-1}+1))\big)}{2(k+1)},
\quad \tilde{\delta}=\frac{\max(1,\delta)}2, \]
as well as
{\allowdisplaybreaks
\begin{align*}
\tilde{\kappa}_0(y,p)&=\max\big(1+\iota'(4),\kappa_0(y^{\pm}+1,p),\rho_p(y)+\lambda_0(y^{\pm}+1,p),\rho_p(y)+2\rho_p\big),\\
\tilde{\lambda}_0(y,p)&=\max\big(\lambda_0(y^{\pm}+1,p),2\rho_p\big),\\
\tilde{u}_0(y,p,\kappa,\lambda)&=\max\big(1,u_0(y+1,p,\kappa,\tilde{\lambda})+\kappa-\lfloor\lambda\rfloor+\iota'(y),\\
&\qquad\qquad u_0(y^{-1}+1,p,\kappa,\tilde{\lambda})+\kappa+\lceil\tilde{\lambda}\rceil-\lfloor\lambda\rfloor-\lfloor\tilde{\lambda}\rfloor+\iota'(y),\\
&\qquad\qquad 2\kappa-\lfloor\lambda\rfloor-\lfloor\tilde{\lambda}\rfloor+\iota'(y(y+1))+1,\\
&\qquad\qquad 2\kappa+\lceil\tilde{\lambda}\rceil-\lfloor\lambda\rfloor-2\lfloor\tilde{\lambda}\rfloor+\iota'(y)+\iota'(y^{-1}+1)+1\big),\\
\tilde{n}_0(y,p,\kappa,\lambda)&=\kappa+\iota'(y)+\Big\lceil\max\Big(2\kappa+2\iota'(y^{\pm}+1)+2\iota'(12),\\
&\qquad\qquad  n_0(y^{\pm}+1,p,\kappa,\tilde{\lambda})+\kappa+\iota'(y^{\pm}+1)-1,\\
&\qquad\qquad \tfrac32n_0(y^{\pm}+1,p,\kappa,\tilde{\lambda})-\tfrac12\kappa-\tfrac12\iota'(y^{\pm}+1)-\tfrac32,\\
&\qquad\qquad n_0(y^{\pm}+1,p,\kappa,\tilde{\lambda})+\iota'(2)+\iota'(y^{\pm}+1)+\varepsilon(y^{\pm})-\lfloor\tilde{\lambda}\rfloor,\\
&\qquad\qquad \frac{2(r+\kappa+\iota'(y^{\pm}+1))+(k-1)}{1-\ell},\\
&\qquad\qquad \varepsilon_u\Big(\frac{2k+1-\ell}{k}\big(u_0(y^{\pm}+1,p,\kappa,\tilde{\lambda})-\kappa+\iota'(2)+\varepsilon(y^{\pm})\big)-\\
&\mskip 270mu -\frac{r-1}{k(k+1)}+\frac{\kappa+\iota'(y^{\pm}+1)}{k+1}\Big)\Big)\Big\rceil,
\end{align*}
}
where $\varepsilon_u=0$ if $u_0(y^{\pm}+1,p,\kappa,\tilde{\lambda})-\kappa+\iota'(2)+\varepsilon(y^{\pm})\leqslant 0$, and $\varepsilon_u=1$ otherwise.

If $k=0$, the above holds with
\begin{gather*}
\tilde{r}(y)=r(y)/2,\quad \tilde{\delta}=\delta/2,\\
\tilde{\kappa}_0=\kappa_0(y,p),\quad \tilde{\lambda}_0=\lambda_0(y,p),\quad \tilde{n}_0=n_0,\quad \tilde{u}_0=u_0.
\end{gather*}

If $\ell=1$, the above holds with
\begin{gather*}
\tilde{r}(y)=0,\quad \tilde{\delta}=\frac{k}{k+1},\quad \tilde{\kappa}_0=1+\iota'(4),\quad \tilde{\lambda}_0=\rho_p,\\
\tilde{n}_0=\kappa+\iota'(y)+1+\iota'(12),\quad \tilde{u}_0=\max\big(\kappa-\lfloor\lambda\rfloor+\iota'(y)+1,1\big).
\end{gather*}
\end{theorem}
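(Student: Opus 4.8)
The proof is the $p$-adic realization of the van der Corput $A$-process: Weyl differencing, then the given exponent datum applied to the resulting family of differenced sums, all organized around an optimization in the differencing length. I will describe the main regime $0<k\le\tfrac12\le\ell<1$; the two degenerate cases $k=0$ (interpolate the direct estimate with the trivial bound $S\ll B$, no differencing) and $\ell=1$ come out of short, more direct variants of the same ideas, which is why they carry their own, minimal, parameter lists. So fix $f\in\mathbf{F}(w,y,\kappa,\lambda,u,\omega,\omega')$ and $0<B\le p^{n-w-\kappa-\iota'}$, put $b(m)=e(f(m)/p^n)$ and $H=p^{\chi}$ with $\chi\ge0$ to be chosen, and apply Lemma~\ref{WeylDifferencing} in the form \eqref{ResultOfWeylDifferencing}. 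Writing $h=p^{\ord_p h}h'$ with $h'\in\mathbb{Z}_p^{\times}$, the inner phase $f_{\chi,h}(t)=f(t+p^{\chi}h)-f(t)=f_{\chi+\ord_p h,\,h'}(t)$ is governed by Lemma~\ref{AprocessClassesOfFunctions} (applied with $\chi+\ord_p h$ in place of $\chi$): it lies in a class $\mathbf{F}(w+\chi+\ord_p h+\kappa+\iota',\,y+1,\,\kappa,\,\tilde\lambda,\,\tilde u,\dots)$ with $\tilde\lambda=\min(\kappa-\rho_p(y),\lambda)$, so that its effective modulus is that of $f$ divided by $p^{\chi+\ord_p h+\kappa+\iota'(y+1)}$.

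Next I feed each inner sum to the given $p$-adic exponent datum $(k,\ell,r,\delta,(n_0,u_0,\kappa_0,\lambda_0))$. There are in fact two routes. Directly: the differenced phase has parameter $y+1$, and one applies the datum to $\sum_{m\in J(h)}e(f_{\chi,h}(m)/p^n)$, which is legitimate provided $|J(h)|\le B$ does not exceed the effective modulus of $f_{\chi,h}$ — a cap on $\chi$ from above — and provided $\kappa\ge\kappa_0(y+1,p)$, $\tilde\lambda\ge\lambda_0(y+1,p)$, and suitable lower bounds on $u$ and $n-w$ hold. Alternatively, one first passes from $f$ (or from $f_{\chi,h}$) to its dual via Theorem~\ref{SummationFormula}, legitimate since $B$ is short relative to $p^{n-w}$, which swaps the parameter for $y^{-1}$ (respectively, $y^{-1}+1$ after differencing the dual); this is the ``tighter estimate'' route advertised in the introduction, and it is the one needed in the range where $B$ is too long for the direct route. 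Running the argument along whichever route is admissible in the given range of $B$ versus $p^{n-w}$, and then taking a bound conservative over both, is exactly what produces the quantities $r(y^{\pm}+1,p,\kappa,\tilde\lambda)$ (a maximum over $y+1$ and $y^{-1}+1$) and $\iota'(y^{\pm}+1)$ (entering $\tilde r$ through a minimum) in the statement; Lemma~\ref{EquivalenceSharpSmooth} is used to move between sharp and smooth cutoffs whenever the summation formula is invoked.

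Using $\ell\ge k$ to replace $|J(h)|^{\ell-k}$ by $B^{\ell-k}$, and discarding the favorable (since $k\ge0$) gain from $\ord_p h>0$, the sum over $h$ contributes a factor $\ll Bp^{-\chi}$, so \eqref{ResultOfWeylDifferencing} becomes
\[ S^2\ll Bp^{\chi}+B^{\ell-k+1}\,p^{\,r'-k(\chi+\kappa+\iota'(y^{\pm}+1))}\bigl(p^{n-w-\kappa-\iota'}\bigr)^{k}\bigl(\log p^{n-w-\kappa-\iota'}\bigr)^{\delta}, \]
with $r'=r(y^{\pm}+1,p,\kappa,\tilde\lambda)$. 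Choosing $p^{\chi}$ to be the nearest power of $p$ to the value balancing the two terms, and taking square roots, yields precisely the $A$-transformed datum
\[ S\ll p^{\tilde r}\Bigl(\tfrac{p^{n-w-\kappa-\iota'}}{B}\Bigr)^{k/(2(k+1))}B^{(k+\ell+1)/(2(k+1))}\bigl(\log p^{n-w-\kappa-\iota'}\bigr)^{\tilde\delta}; \]
the rounding of $\chi$ to an integer accounts for the extra ``$+k$'' in the numerator of $\tilde r$, and the boundary and degenerate cases are absorbed by the coarse choice $\tilde\delta=\max(1,\delta)/2$. When the balancing $\chi$ would fall below $0$ or above its cap, one is in a boundary regime settled directly, and the two special cases $k=0$, $\ell=1$ are precisely these regimes recorded as stand-alone statements.

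It remains to assemble the initial-condition functions. Each constraint collected above — applicability of Lemma~\ref{AprocessClassesOfFunctions}, of Theorem~\ref{SummationFormula} (conditions \eqref{BProcessCondition1}, including the $p\in\{2,3\}$ provisos), of the given datum on the (possibly dualized, possibly differenced) inner sums in the $(y+1)$- or $(y^{-1}+1)$-class, and the requirement that the optimal $\chi$ genuinely lie in range — unwinds term by term into the displayed formulas for $\tilde n_0,\tilde u_0,\tilde\kappa_0,\tilde\lambda_0$; in particular the long $\lceil\max(\cdots)\rceil$ defining $\tilde n_0$ is simply the list of competing lower bounds on $n-w$ demanded by these sub-steps, and the ubiquitous $\tilde\lambda=\min(\kappa-\rho_p(y),\lambda)$ traces back to Lemma~\ref{AprocessClassesOfFunctions}. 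The main obstacle is exactly this bookkeeping together with the range analysis of the previous paragraph: since the estimate must be uniform in $B$, $\chi$, $p$ and $f$, one must verify that an admissible $\chi$ always achieves the optimum up to a bounded power of $p$ — or else that one of the boundary regimes already delivers the claimed bound — and that every auxiliary result is legal under the hypotheses packaged into $(\tilde n_0,\tilde u_0,\tilde\kappa_0,\tilde\lambda_0)$. The $p\in\{2,3\}$ corrections and the shift of $w$ by $\ord_p h$ (compensated by a smaller modulus) are the fiddly points.
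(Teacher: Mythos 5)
Your overall structure matches the paper's: Weyl differencing via Lemma~\ref{WeylDifferencing} combined with Lemma~\ref{AprocessClassesOfFunctions}, then the given datum applied to the differenced sums (whose phases carry parameter $y+1$), with a dual route through the summation formula of Theorem~\ref{SummationFormula} to cover large $B$ (producing the $y^{-1}+1$ parameter upon differencing the dual), Lemma~\ref{EquivalenceSharpSmooth} mediating between sharp and smooth cutoffs, and the usual optimization in $\chi$ (with the integer rounding producing the extra $+k$ in $\tilde r$). You also correctly place the degenerate cases $k=0$ and $\ell=1$ as interpolation arguments.

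However, there is a genuine gap in your treatment of the $h$-sum after differencing. You write that one can ``discard the favorable (since $k\ge0$) gain from $\ord_p h>0$'' and conclude that the $h$-sum simply contributes a factor $\ll Bp^{-\chi}$. This tacitly assumes that the given exponent datum can be applied to the differenced inner sum $\sum_{m\in J(h)}e(f_{\chi,h}(m)/p^n)$ for \emph{every} $h$. It cannot: for $h$ with $\ord_p h$ large enough, the effective modulus exponent $n-\tilde w-\chi-\ord_p h-\hat\kappa$ drops below $n_0$, and the datum's defining inequality \eqref{SharpDef} is simply not asserted there. You cannot fall back on the trivial bound $|S(h)|\le|J(h)|\le B$ either: summing the trivial bound over the range $\ord_p h>n-\tilde w_{\chi}-n_0$ contributes $\asymp B^2p^{-(n-\tilde w_\chi-n_0)}$ to $HS_2$, which is not absorbed into $BH$ without imposing an unacceptably large $\tilde n_0$. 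The paper closes this exactly by splitting the $h$-sum into $S_1$ (given datum applies) and $S_2$, and handling $S_2$ with the already-established datum $\omega_{1/2}$ from \eqref{ExponentDatumHalfHalf} --- which has small initial modulus requirement $\kappa+\iota'+1+\iota'(12)$ and thus remains applicable down to much smaller effective moduli. This intermediate application of $\omega_{1/2}$ is what makes the stated $\tilde n_0$ achievable and is not a cosmetic bookkeeping step; your proof, as written, would not deliver the claimed initial conditions without it.

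Two smaller remarks. First, you leave open the possibility of dualizing $f_{\chi,h}$ rather than $f$; the paper only dualizes the original $f$ (getting phase in the class with parameter $y^{-1}$) and then re-runs the whole Part~1 argument on that dual sum --- your phrasing is not wrong in spirit but would need to be pinned down. Second, in the $\ell=1$ case the paper interpolates between $\omega_{1/2}$ and the trivial bound (not between the given datum and the trivial bound), which is why the output initial conditions in that case are those of $\omega_{1/2}$ rather than of the input datum; your description conflates it with the $k=0$ case.
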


\begin{proof}
Let $f\in\mathbf{F}(w,y,\kappa,\lambda,u,\omega,\omega')$, $M\in\mathbb{Z}$, and $0<B\leqslant p^{n-w-\kappa-\iota'}$ be given, and let
\[ S=\sum_{M<m\leqslant M+B}e\left(\frac{f(m)}{p^n}\right). \]
We consider the principal case $0<k\leqslant\frac12\leqslant\ell<1$; the complementary cases are easy and will be addressed at the end of the proof. Denote $\tilde{w}=w+\kappa+\iota'$, and let $\rho$ and $\sigma$ be real parameters, to be suitably chosen later. We seek to prove an estimate of the form
\begin{equation}
\label{AprocessDesiredEstimate}
S\ll p^{\tilde{r}}\left(\frac{p^{n-\tilde{w}}}B\right)^{\frac k{2(k+1)}}B^{\frac{k+\ell+1}{2(k+1)}}(\log p^{n-\tilde{w}})^{\tilde{\delta}}.
\end{equation}
The basic strategy is to estimate $S$ by applying the given $p$-adic exponent datum to the inner sum in \eqref{ResultOfWeylDifferencing}. For this purpose, we will choose $H$ to be a positive integer, in fact a power of $p$, satisfying
\begin{equation}
\label{ChoiceOfH}
H=p^{\chi}=p^{\sigma}\left(\frac{p^{n-\tilde{w}}}B\right)^{\frac{k}{k+1}}B^{\frac{\ell}{k+1}}
\end{equation}
for some $\sigma\in\mathbb{R}$ to be suitably chosen. It turns out that this strategy works well if $B$ is neither too small nor too large, in a sense which will be made precise.

To make the discussion easier to follow, we present the proof in two parts. The principal range for $B$, along with the easy case when $B$ is small, is treated in the first part of the proof. We will address the range when $B$ is large in the second part of the proof by using the summation formula of Theorem~\ref{SummationFormula} to shorten the sum down to the first range.

\medskip

\textbf{1. Range $1\leqslant B\leqslant p^{n-\tilde{w}-\rho}/H$.} If $1\leqslant B\leqslant H$, then we use the trivial bound $|S|\leqslant B$ to obtain
\[ |S|\leqslant B\leqslant (BH)^{1/2}=p^{\sigma/2}\left(\frac{p^{n-\tilde{w}}}B\right)^{\frac{k}{2(k+1)}}B^{\frac{k+\ell+1}{2(k+1)}}. \]
This suffices for \eqref{AprocessDesiredEstimate} as long as
\begin{equation}
\label{AprocessCondition1}
\tilde{r}\geqslant\sigma/2+o_p,
\end{equation}
where (here and on) we denote $o_p=\text{O}(1/\log p)$ and $0\leqslant o_p^{+}\ll 1/\log p$, so that $p^{o_p},p^{o_p^{+}}\asymp 1$.

We now consider the range $H\leqslant B\leqslant p^{n-\tilde{w}-\rho}/H$, which is of principal interest. The lower bound on $B$ implies that
\begin{gather}
H\geqslant p^{\sigma}\left(\frac{p^{n-\tilde{w}}}H\right)^{\frac{k}{k+1}}H^{\frac{\ell}{k+1}}=p^{\sigma}\big(p^{n-\tilde{w}}\big)^{\frac k{k+1}}H^{\frac{\ell-k}{k+1}},\nonumber\\
H\geqslant p^{\frac{\sigma}{2k+1-\ell}}\big(p^{n-\tilde{w}}\big)^{\frac k{2k+1-\ell}},\label{RangeForH}
\end{gather}
since we are assuming that $(k,\ell)\neq(0,1)$. The upper bound on $B$ can be equivalently written as
\begin{gather}
B\leqslant p^{n-\tilde{w}-\rho}p^{-\sigma}\left(\frac{p^{n-\tilde{w}}}B\right)^{-\frac k{k+1}}B^{-\frac{\ell}{k+1}}=p^{-(\rho+\sigma)}\big(p^{n-\tilde{w}}\big)^{\frac1{k+1}}B^{-\frac{\ell-k}{k+1}}\nonumber\\
\label{RangeForB}
B\leqslant p^{-(\rho+\sigma)\frac{k+1}{\ell+1}}\big(p^{n-\tilde{w}}\big)^{\frac1{\ell+1}}.
\end{gather}
We will assume that
\[ \rho\geqslant\hat{\kappa}, \]
where $\hat{\kappa}=\kappa+\iota'(y+1)$, thus ensuring that
\[ B\leqslant p^{n-\tilde{w}-\chi-\hat{\kappa}}. \]

We can rewrite \eqref{ResultOfWeylDifferencing}, the result of Weyl differencing (Lemma~\ref{WeylDifferencing}), as
\begin{equation}
\label{AprocessBaseFormula}
S^2\ll BH+H\sum_{0<|h|<\frac BH}\left|\sum_{m\in J(h)}e\left(\frac{f_{\chi,h}(m)}{p^n}\right)\right|,
\end{equation}
where $f_{\chi,h}(t)=f(t+p^{\chi}h)-f(t)$. According to Lemma~\ref{AprocessClassesOfFunctions}, assuming that
\begin{equation}
\label{ConditionsLemmaOnClasses}
u>\kappa-\lfloor\lambda\rfloor+\iota'(y),\quad \lambda+\chi+\ord_ph>\rho_p,
\end{equation}
we have that
\begin{align*}
f_{\chi,h}\in\mathbf{F}&\Big(w+\chi+\text{ord}_ph+\kappa+\iota',y+1,\kappa,\tilde{\lambda},\\
&\min\big(u+\lfloor\lambda\rfloor-\kappa-\iota',\chi+\text{ord}_ph+\kappa-\iota'(2)-\varepsilon(y)\big),\omega,\omega'h|h|_p(-y)|y|_p\Big).
\end{align*}

The inner sum $S(h)$ in \eqref{AprocessBaseFormula} will be estimated using an appropriate existing $p$-adic exponent datum. Write $\tilde{w}_{\chi}=\chi+\tilde{w}$, $h_p=|h|^{-1}_p=p^{\chi_p}$. We see that we can use the given $p$-adic exponent datum for those values of $h$ for which $\chi_p$ satisfies
\[ n-\tilde{w}_{\chi}-\chi_p\geqslant n_0:=n_0\big(y+1,p,\kappa,\tilde{\lambda}\big), \]
as long as all other conditions are satisfied. We separate the sum in \eqref{AprocessBaseFormula} into two appropriate ranges for $h$ as
\[ S^2\ll BH+H(S_1+S_2), \]
where
\[ S_1=\sum_{\substack{0<|h|<B/H\\ 0\leqslant\chi_p\leqslant n-\tilde{w}_{\chi}-n_0}}|S(h)|,\qquad S_2=\sum_{\substack{0<|h|<B/H\\ \chi_p>n-\tilde{w}_{\chi}-n_0}}|S(h)|. \]
We think of $BH$ and $HS_1$ as the two main terms in this estimate on $S^2$. All other terms we encounter will be estimated so as to be (essentially) majorized by upper bounds on one of them (as was already done in the case $B\leqslant H$).

\bigskip

We first estimate $S_1$. The inner sum $S(h)$ in $S_1$ can be estimated using the given $p$-adic exponent datum as long as
\begin{equation}
\label{AprocessCondition2}
\begin{gathered}
\kappa\geqslant\kappa_0(y+1,p),\quad \tilde{\lambda}=\min\big(\kappa-\rho_p(y),\lambda\big)\geqslant\lambda_0(y+1,p),\\
 u\geqslant u_0(y+1,p,\kappa,\tilde{\lambda})+\kappa-\lfloor\lambda\rfloor+\iota'(y),
 \end{gathered}
 \end{equation}
 as well as
\[ \chi\geqslant u_0(y+1,p,\kappa,\tilde{\lambda})-\kappa+\iota'(2)+\varepsilon(y). \]
The latter condition is trivially satisfied when the right-hand side is non-positive. If this is not the case, we still need this inequality only in the range \eqref{RangeForH}, so that it is satisfied whenever
\begin{equation}
\label{AprocessCondition2prime} 
n-\tilde{w}\geqslant\varepsilon_u\Big(\frac{2k+1-\ell}{k}\big(u_0(y+1,p,\kappa,\tilde{\lambda})-\kappa+\iota'(2)+\varepsilon(y)\big)-\frac{\sigma}k\Big).
\end{equation}
Writing $r=r(y+1,p,\kappa,\tilde{\lambda})$, we thus obtain the estimate
\begin{align*}
|S(h)|&\ll p^r\left(\frac{p^{n-\tilde{w}_{\chi}-\chi_p-\hat{\kappa}}}{|J(h)|}\right)^k|J(h)|^{\ell}(\log p^{n-\tilde{w}_{\chi}-\chi_p-\hat{\kappa}})^{\delta}\\
&\leqslant p^r\left(\frac{p^{n-\tilde{w}_{\chi}-\chi_p-\hat{\kappa}}}{B}\right)^kB^{\ell}(\log p^{n-\tilde{w}})^{\delta}
\end{align*}
valid for all $h$ appearing in $S_1$ for which $|J(h)|\leqslant p^{n-\tilde{w}_{\chi}-\chi_p-\hat{\kappa}}$, as well as the estimate
\[ |S(h)|\ll p^r\big(p^{n-\tilde{w}_{\chi}-\chi_p-\hat{\kappa}}\big)^{\ell}\left(\frac{|J(h)|}{p^{n-\tilde{w}_{\chi}-\chi_p-\hat{\kappa}}}+1\right)(\log p^{n-\tilde{w}})^{\delta}, \]
valid for all $h$ in $S_1$ regardless of the size of $|J(h)|$. Combining these estimates, we find that, assuming that \eqref{AprocessCondition2} and \eqref{AprocessCondition2prime} are satisfied, we have
\begin{align*}
HS_1&\ll p^rB\left(\frac{p^{n-\tilde{w}_{\chi}-\hat{\kappa}}}B\right)^kB^{\ell}(\log p^{n-\tilde{w}})^{\delta}\\
&\qquad\qquad+p^rH\sum_{\substack{0<|h|<\frac BH,\\ \chi_p>n-\tilde{w}_{\chi}-\hat{\kappa}-\log B/\log p}}\frac{B}{\big(p^{n-\tilde{w}_{\chi}-\chi_p-\hat{\kappa}}\big)^{1-\ell}}(\log p^{n-\tilde{w}})^{\delta}.
\end{align*}
The second term of this estimate is
\begin{align*}
&\leqslant p^r\frac{BH}{p^{(n-\tilde{w}_{\chi}-\hat{\kappa})(1-\ell)}}\sum_{\psi>n-\tilde{w}_{\chi}-\hat{\kappa}-\log B/\log p}p^{\psi(1-\ell)}\frac{B/H}{p^{\psi}}(\log p^{n-\tilde{w}})^{\delta}\\
&\ll p^r\frac{B^2}{p^{(n-\tilde{w}_{\chi}-\hat{\kappa})(1-\ell)}}\left(\frac{p^{n-\tilde{w}_{\chi}-\hat{\kappa}}}B\right)^{-\ell}(\log p^{n-\tilde{w}})^{\delta}\\
&=p^r\frac{B^{2+\ell}}{p^{n-\tilde{w}_{\chi}-\hat{\kappa}}}(\log p^{n-\tilde{w}})^{\delta}.
\end{align*}
In light of $B\leqslant p^{n-\tilde{w}_{\chi}-\hat{\kappa}}$, this term is $\ll p^rB^{1+\ell}(\log p^{n-\tilde{w}})^{\delta}$ and is absorbed in the first term of the estimate. Summing up, we have proved that, assuming \eqref{AprocessCondition2} and \eqref{AprocessCondition2prime},
\[ HS_1\ll p^r\left(\frac{p^{n-\tilde{w}_{\chi}-\hat{\kappa}}}B\right)^kB^{1+\ell}(\log p^{n-\tilde{w}})^{\delta}. \]

\bigskip

We now turn our attention to $S_2$, where we estimate the inner sum $S(h)$ using the $p$-adic exponential datum \eqref{ExponentDatumHalfHalf}. This is allowable as long as
\[ n-\tilde{w}_{\chi}-\chi_p\geqslant\hat{\kappa}+1+\iota'(12) \]
as well as
\begin{equation}
\label{AprocessCondition4}
\begin{gathered}
\kappa\geqslant 1+\iota'(4),\quad \tilde{\lambda}=\min\big(\kappa-\rho_p(y),\lambda\big)\geqslant\rho_p,\\
u\geqslant 2\kappa-\lfloor\lambda\rfloor-\lfloor\tilde{\lambda}\rfloor+\iota'(y(y+1))+1,
\end{gathered}
\end{equation}
and
\[ \chi+\chi_p\geqslant \iota'(2)+\iota'(y+1)+\varepsilon(y)+1-\lfloor\tilde{\lambda}\rfloor. \]
Note that the final condition is required for $\chi_p\geqslant n-\tilde{w}_{\chi}-n_0+1$, so that it is satisfied as long as
\begin{equation}
\label{AprocessCondition4Prime}
n-\tilde{w}-n_0\geqslant\iota'(2)+\iota'(y+1)+\varepsilon(y)-\lfloor\tilde{\lambda}\rfloor.
\end{equation}
Assuming this to be the case, we obtain the estimate
\[ |S(h)|\ll \big(p^{n-\tilde{w}_{\chi}-\chi_p-\hat{\kappa}}\big)^{1/2}\left(\frac{|J(h)|}{p^{n-\tilde{w}_\chi-\chi_p-\hat{\kappa}}}+1\right)\log p^{n-\tilde{w}_{\chi}-\chi_p-\hat{\kappa}}, \]
valid for all $h$ appearing in $S_2$ for which $\chi_p\leqslant n-\tilde{w}_{\chi}-\hat{\kappa}-1-\iota'(12)$. Estimating the remaining summands in $S_2$ trivially as $|S(h)|\leqslant B$, we thus find that
\begin{align*}
HS_2
&\ll H\sum_{\psi\geqslant n-\tilde{w}_{\chi}-n_0+1}\frac{B/H}{p^{\psi}}\left(\frac{B}{p^{(n-\tilde{w}_{\chi}-\hat{\kappa}-\psi)/2}}+p^{(n-\tilde{w}_{\chi}-\hat{\kappa}-\psi)/2}\right)\log p^{n-\tilde{w}}\\
&\qquad\qquad+H\sum_{\psi\geqslant n-\tilde{w}_{\chi}-\hat{\kappa}-\iota'(12)}\frac{B/H}{p^{\psi}}B\\
&\ll \frac{B^2}{p^{(n-\tilde{w}_{\chi}-\hat{\kappa})/2}}\frac1{p^{(n-\tilde{w}_{\chi}-n_0+1)/2}}\log p^{n-\tilde{w}}\\
&\qquad\qquad+Bp^{(n-\tilde{w}_{\chi}-\hat{\kappa})/2}\frac1{p^{3(n-\tilde{w}_{\chi}-n_0+1)/2}}\log p^{n-\tilde{w}}+\frac{B^2}{p^{n-\tilde{w}_{\chi}-\hat{\kappa}-\iota'(12)}}\\
&\ll BH\frac{Bp^{(n_0-1)/2+\hat{\kappa}/2}+Bp^{\iota'(12)+\hat{\kappa}}+p^{3(n_0-1)/2-\hat{\kappa}/2}}{p^{n-\tilde{w}}}\log p^{n-\tilde{w}}.
\end{align*}
We now arrange for our parameters to be such that this upper bound on $HS_2$ is no more than $BH\log p^{n-\tilde{w}}$. (Here and below, we sacrifice a small power of logarithm for no other reason but clarity.) We will initially do this for the entire range $H\leqslant B\leqslant p^{n-\tilde{w}_{\chi}-\rho}$; this range will be restricted in the second part of the proof, relaxing the conditions to be imposed. Letting $\mu=\max((n_0-1)/2+\hat{\kappa}/2,\iota'(12)+\hat{\kappa})$, the condition that $Bp^{\mu}\leqslant p^{n-\tilde{w}}$ is, in light of the range \eqref{RangeForB} for $B$, satisfied whenever
\begin{gather*}
p^{-(\rho+\sigma)\frac{k+1}{\ell+1}}\big(p^{n-\tilde{w}}\big)^{\frac1{\ell+1}}p^{\mu}\leqslant p^{n-\tilde{w}}\\
p^{-(\rho+\sigma)\frac{k+1}{\ell+1}}p^{\mu}\leqslant\big(p^{n-\tilde{w}}\big)^{\frac{\ell}{\ell+1}}.
\end{gather*}
Along with the condition that $p^{3(n_0-1)/2-\hat{\kappa}/2}\leqslant p^{n-\tilde{w}}$, we will have that $HS_2\ll BH\log p^{n-\tilde{w}}$ as long as \eqref{AprocessCondition4} and \eqref{AprocessCondition4Prime} hold as well as
\begin{equation}
\label{FirstRangeFornw}
\begin{gathered}
n-\tilde{w}\geqslant\frac{\ell+1}{2\ell}\big(\max(n_0-1+\hat{\kappa},2\iota'(12)+2\hat{\kappa})\big)-(\rho+\sigma)\frac{k+1}{\ell},\\ n-\tilde{w}\geqslant \tfrac32n_0-\tfrac12\hat{\kappa}-\tfrac32.
\end{gathered}
\end{equation}

\bigskip

Collecting all contributions from the estimations of $HS_1$ and $HS_2$, we find that, in the range under consideration, and assuming \eqref{ConditionsLemmaOnClasses}, \eqref{AprocessCondition2}, \eqref{AprocessCondition2prime}, \eqref{AprocessCondition4}, \eqref{AprocessCondition4Prime}, and \eqref{FirstRangeFornw}, we have that
\begin{equation}
\label{AprocessProvedSoFar}
\begin{aligned}
S^2&\ll BH+H(S_1+S_2)\\
&\ll BH\log p^{n-\tilde{w}}+p^r\left(\frac{p^{n-\tilde{w}-\hat{\kappa}}}B\right)^k\frac{B^{1+\ell}}{H^k}(\log p^{n-\tilde{w}})^{\delta}.
\end{aligned}
\end{equation}

An $H$ satisfying
\[ H^{k+1}=p^r\left(\frac{p^{n-\tilde{w}-\hat{\kappa}}}{B}\right)^kB^{\ell} \]
would be essentially optimal. We can't make this exact choice as we are bound by the condition that $H$ be a non-negative power of $p$. However, it is reasonable to seek an $H$ to be the power of $p$ for which
\begin{align*}
c_Hp^{\frac{r-1}{k+1}}\left(\frac{p^{n-\tilde{w}-\hat{\kappa}}}B\right)^{\frac{k}{k+1}}&B^{\frac{\ell}{k+1}}=H_{\ast}\\
&<H\leqslant H^{\ast}=c_Hp^{\frac{r+k}{k+1}}\left(\frac{p^{n-\tilde{w}-\hat{\kappa}}}B\right)^{\frac{k}{k+1}}B^{\frac{\ell}{k+1}},
\end{align*}
with a suitable choice of $c_H>0$, since $H^{\ast}=pH_{\ast}$. Such a choice is admissible as long as $H^{\ast}\geqslant 1$. There are several ways to ensure this; we find it convenient to invoke the condition \eqref{SecondRangeFornw} below, which will be imposed anyway. In light of this condition, we have that $n-\tilde{w}-\hat{\kappa}+1\geqslant (n_0-\hat{\kappa})+\max(n-\tilde{w}-n_0+1,\frac12(n_0-\hat{\kappa}-1))$, with the first of the two latter expressions $\geqslant\hat{\kappa}$, so that $n-\tilde{w}-\hat{\kappa}+1\geqslant (n_0-\hat{\kappa})+\max(\frac12(n-\tilde{w}-n_0+\hat{\kappa}+1),\frac12(n_0-\hat{\kappa}-1))\geqslant (n_0-\hat{\kappa})+\frac14(n-\tilde{w})$. It follows that $H^{\ast}\geqslant c_H\big(p^{r+(n_0-\hat{\kappa})k}(\log p^{n-\tilde{w}})^{\delta}\big)^{1/(k+1)}p^{(n-\tilde{w})/4(k+1)}/(\log p^{n-\tilde{w}})^{\delta}\geqslant 1$ for a sufficiently large $c_H>1$ (depending only on the initial $p$-adic exponent datum in Theorem~\ref{Aprocess}), since the second factor is trivially $\gg 1$, while the first factor is $\gg 1$ as seen (following Definition~\ref{ExponentPairsDefinition}) after \eqref{NoBetterThanSqrt}.

With such a choice of $H$, we have
\begin{equation}
\label{RangeForSigma}
\frac{r-1-k\hat{\kappa}}{k+1}+o_p^{+}<\sigma\leqslant\frac{r+k-k\hat{\kappa}}{k+1}+o_p^{+},
\end{equation}
as well as
\[ S^2\ll p^{\frac{r+k-k\hat{\kappa}}{k+1}}\left(\frac{p^{n-\tilde{w}}}B\right)^{\frac{k}{k+1}}B^{\frac{k+\ell+1}{k+1}}(\log p^{n-\tilde{w}})^{\max(1,\delta)}. \]
We see that this is allowable for \eqref{AprocessDesiredEstimate} as long as
\begin{equation}
\label{AprocessCondition5}
\tilde{r}\geqslant\frac{r+k(1-\kappa-\iota'(y+1))}{2(k+1)},\quad \tilde{\delta}\geqslant\frac{\max(1,\delta)}2.
\end{equation}

Note that the first of these two inequalities subsumes \eqref{AprocessCondition1}. Further, the first of the two conditions \eqref{ConditionsLemmaOnClasses} is subsumed in \eqref{AprocessCondition4}. The second condition can also be dispensed with if $\lambda\geqslant 2\rho_p$ or if $\chi>0$. We could ensure the latter by imposing a lower bound on $n-\tilde{w}$, but we keep things simple and make an innocuous assumption
\begin{equation}
\label{ConditionOnLambdaTilde}
\tilde{\lambda}\geqslant 2\rho_p
\end{equation}
to take place of \eqref{ConditionsLemmaOnClasses}, with a $\tilde{\lambda}$ in place of $\lambda$ with an eye on the second part of the proof.

Summing up, we have proved that the estimate \eqref{AprocessDesiredEstimate} holds for all $0<B\leqslant p^{n-\tilde{w}_{\chi}-\rho}$ (where $\rho\geqslant\hat{\kappa}$), assuming that all conditions listed in \eqref{AprocessCondition2}, \eqref{AprocessCondition2prime}, \eqref{AprocessCondition4}, \eqref{AprocessCondition4Prime}, \eqref{FirstRangeFornw}, \eqref{AprocessCondition5}, \eqref{ConditionOnLambdaTilde}, and \eqref{SecondRangeFornw} are met.

\bigskip

\textbf{2. The complementary range, split at $p^{(n-\tilde{w})/2}$, and conclusion.} The complementary range $p^{n-\tilde{w}_{\chi}-\rho}<B\leqslant p^{n-\tilde{w}}$ really \textit{should} be treated in a different way, for in this range the supposed second main term in \eqref{AprocessProvedSoFar} does not correctly capture the full contribution of the terms $|S(h)|$ to $HS_1$,  because the length of summation $|J(h)|$ in $S(h)$ is unfavorably large compared to the modulus $p^{n-\tilde{w}_{\chi}-\chi_p-\hat{\kappa}}$ already for $\chi_p=0$. This is in the nature of the method. The Weyl-van der Corput inequality (Lemma~\ref{WeylDifferencing}) has the effect of substantially reducing the modulus relative to the length of the summation; this is its intended purpose. But if the length of the summation, which remains $\asymp B$, is too large, then this effect goes too far.

One way to deal with the supplementary range, in which $B$ is rather large compared with the modulus $p^{n-\tilde{w}}$, is to apply the $p$-adic exponent datum \eqref{ExponentDatumHalfHalf} and then make sure that the resulting estimate $p^{(n-\tilde{w})/2}$ is no more than \eqref{AprocessDesiredEstimate}. (This approach should be compared to the application of the P\'olya-Vinogradov inequality in \cite{BarbanLinnikChudakov} to dispense with the range $x\gg q^{2/3}$ when estimating the sum $\sum_{n\leqslant x}\chi(n)$.) This turns out to work wonderfully for $p^{n-\tilde{w}_{\chi}}\leqslant  B\leqslant p^{n-\tilde{w}}$, requiring no adjustments to the final result, and not too badly for $p^{n-\tilde{w}_{\chi}-\rho}<B<p^{n-\tilde{w}_{\chi}}$, where the price to be paid is that one must require $\tilde{r}\geqslant(\rho+\sigma)/2+o_p\geqslant (\hat{\kappa}+\sigma)/2+o_p$, which increases the final upper bound by a factor of at least $p^{\hat{\kappa}/2}$. This would not be horrible (and it is certainly inconsequential if one is only concerned with a fixed prime $p$), but we can do substantially better. If we think about the proof of the datum \eqref{ExponentDatumHalfHalf}, we realize that it consists of an application of the summation formula of Theorem~\ref{SummationFormula}, followed by a trivial estimate of the resulting shortened sum. In this light, the range $p^{n-\tilde{w}_{\chi}}\leqslant B\leqslant p^{n-\tilde{w}}$ corresponds dually to the range $1\leqslant B\leqslant H$, in which our estimate \eqref{AprocessDesiredEstimate} was indeed obtained by the trivial bound. It thus becomes clear that, to avoid losses for $B<p^{n-\tilde{w}_{\chi}}$, we should follow the application of the summation formula not by the trivial estimate but by \textit{exactly the same estimates} that we used in the dual range $B>H$.

At this point, we reflect back on the range considered in the first part of the proof, choose
\[ \rho=\hat{\kappa}, \]
and instead claim \eqref{AprocessDesiredEstimate} \textit{for all $1\leqslant B\leqslant p^{(n-\tilde{w})/2}$ and only those $B$}. It suffices to establish \eqref{AprocessDesiredEstimate} for all $1\leqslant B\leqslant p^{(n-\tilde{w})/2}/b$, where $b>0$ is a suitably chosen large constant. Note that, for all $B$ in this interval,
\begin{align*}
p^{n-\tilde{w}_{\chi}-\hat{\kappa}}
&\geqslant\frac{p^{n-\tilde{w}-\hat{\kappa}}}{p^{\sigma}\big(p^{\frac{n-\tilde{w}}2}/b\big)^{\frac{k+\ell}{(k+1)}}}\\
&=p^{-\hat{\kappa}-\sigma}b^{\frac{k+\ell}{k+1}}\big(p^{n-\tilde{w}}\big)^{\frac{k+2-\ell}{2(k+1)}}=p^{-\hat{\kappa}-\sigma}b^{\frac{k+\ell}{k+1}}\big(p^{n-\tilde{w}}\big)^{\frac{1-\ell}{2(k+1)}}p^{(n-\tilde{w})/2}.
\end{align*}
We want to ensure that the left-hand side, which is a power of $p$, is at least $p^{(n-\tilde{w})/2}$; for this, it suffices to ensure that the right-hand is $>p^{(n-\tilde{w}-1)/2}$. Keeping in mind the range for $\sigma$ in \eqref{RangeForSigma} and adjusting the constant $b$ as necessary, we conclude
that the proof of the estimate \eqref{AprocessDesiredEstimate} in the first part covers the entire range $1\leqslant B\leqslant p^{(n-\tilde{w})/2}$ as long as
\begin{equation}
\label{AprocessCondition6}
n-\tilde{w}\geqslant\frac{2(k+1)}{1-\ell}\left(\hat{\kappa}+\frac{r+k-k\hat{\kappa}}{k+1}-\frac12\right)=\frac{2(r+\kappa+\iota'(y+1))+(k-1)}{1-\ell}.
\end{equation}
As we announced, this restriction of range also allows us to relax the condition \eqref{FirstRangeFornw} somewhat. In light of $B\leqslant p^{(n-\tilde{w})/2}$, the condition that $Bp^{\mu}\leqslant p^{n-\tilde{w}}$ is satisfied whenever $n-\tilde{w}\geqslant 2\mu$, so that we may replace \eqref{FirstRangeFornw} with
\begin{equation}
\label{SecondRangeFornw}
n-\tilde{w}\geqslant\max\big(n_0-1+\hat{\kappa},2\hat{\kappa}+2\iota'(12),\tfrac32(n_0-1)-\tfrac12\hat{\kappa}\big).
\end{equation}

\medskip

We now address the case when $B\geqslant p^{(n-\tilde{w})/2}$. Instead of $S$, consider
\[ S'=\sum_{m\in\mathbb{Z}}e\left(\frac{f(m)}{p^n}\right)h\left(\frac mB\right). \]
According to Theorem~\ref{SummationFormula}, assuming that
\begin{equation}
\label{AprocessConditionDual}
\begin{gathered}
\kappa\geqslant 1+\iota'(4),\quad n-\tilde{w}\geqslant 1+\iota'(12),\\
u+\lfloor\lambda\rfloor>\kappa+\iota',\quad\tilde{\lambda}=\min\big(\kappa-\rho_p(y),\lambda\big)>0,
\end{gathered}
\end{equation}
we have that
\[ S'=\frac{\epsilon B}{p^{(n-\tilde{w})/2}}\sum_{t\in\mathbb{Z}}e\left(\frac{\mathring{f}(t)}{p^n}\right)\hat{h}_{f,B}\left(\frac{t}{p^{n-\tilde{w}}/B}\right), \]
with $|\epsilon|=1$ and $\mathring{f}$ as in \eqref{DualFClasses}:
\[ \mathring{f}\in\mathbf{F}\big(w+\ord_py,y^{-1},\kappa,\tilde{\lambda},u+\varepsilon_{\lambda}-\ord_py,\omega'{}^{-1},-\omega^{-1}\big). \]
Note that $\tilde{w}':=(w+\ord_py)+\kappa+\iota=w+\kappa+\iota'=\tilde{w}$. Since $p^{n-\tilde{w}}/B\leqslant p^{(n-\tilde{w})/2}$, the first part of the proof shows that sharp-cutoff sums of $e(\mathring{f}(t)/p^n)$ of length no more than $p^{n-\tilde{w}}/B$ can be estimated as in \eqref{AprocessDesiredEstimate}, as long as $\mathring{f}$ satisfies all conditions accumulated in the process of proving this estimate. Refering to \eqref{AprocessCondition2}, \eqref{AprocessCondition2prime}, \eqref{AprocessCondition4}, \eqref{AprocessCondition4Prime}, \eqref{AprocessCondition5}, \eqref{AprocessCondition6} and \eqref{SecondRangeFornw}, we find that we require the following additional assumptions:
\begin{equation}
\label{AprocessDualCondition}
\begin{gathered}
\kappa\geqslant\kappa_0(y^{-1}+1,p),\quad \tilde{\lambda}\geqslant\lambda_0(y^{-1}+1,p),\\
u+\varepsilon_{\lambda}-\ord_py\geqslant u_0(y^{-1}+1,p,\kappa,\tilde{\lambda})+\kappa-\lfloor\tilde{\lambda}\rfloor+\iota(y),\\
n-\tilde{w}\geqslant\varepsilon_u\Big(\frac{2k+1-\ell}{k}\big(u_0(y^{-1}+1,p,\kappa,\tilde{\lambda})-\kappa+\iota'(2)+\varepsilon(y^{-1})\big)-\frac{\sigma'}k\Big),\\
u+\varepsilon_{\lambda}-\ord_py\geqslant 2\kappa-2\lfloor\tilde{\lambda}\rfloor+\iota'(y^{-1}(y^{-1}+1))+1,\\
n-\tilde{w}-n'_0\geqslant\iota'(2)+\iota'(y^{-1}+1)+\varepsilon(y^{-1})-\lfloor\tilde{\lambda}\rfloor\\
\tilde{r}\geqslant\frac{r+k(1-\kappa-\iota'(y^{-1}+1))}{2(k+1)},\\
n-\tilde{w}\geqslant\frac{2(r+\kappa+\iota'(y^{-1}+1))+(k-1)}{1-\ell},\\
n-\tilde{w}\geqslant\max\big(n'_0-1+\hat{\kappa}',2\hat{\kappa}'+2\iota'(12),\tfrac32(n'_0-1)-\tfrac12\hat{\kappa}'\big),
\end{gathered}
\end{equation}
where
\begin{gather*}
n'_0:=n'_0(y^{-1}+1,p,\kappa,\tilde{\lambda}),\quad \hat{\kappa}':=\kappa+\iota'(y^{-1}+1),\\
\frac{r-1-k\hat{\kappa}'}{k+1}+o_p^{+}<\sigma'\leqslant\frac{r+k-k\hat{\kappa}'}{k+1}+o_p^{+}.
\end{gather*}
Assuming that these hold, and in light of $\|\hat{h}_{f,B}\|_{\star}=\|\hat{h}\|_{\star}$, we can estimate $S'$, using also the implication $H(\delta)\implies H^{\sharp}_{\text{sm}}(\delta)$ of Lemma~\ref{EquivalenceSharpSmooth}, as
\[ S'\ll
\frac{B}{p^{(n-\tilde{w})/2}}
p^{\tilde{r}}B^{\frac{k}{2(k+1)}}\left(\frac{p^{n-\tilde{w}}}B\right)^{\frac{k+\ell+1}{2(k+1)}}
\big(\log p^{n-\tilde{w}}\big)^{\tilde{\delta}}, \]
with a uniform implied constant, depending on $h$ only. From this it follows that
\begin{align*}
S'&\ll p^{\tilde{r}}\big(p^{n-\tilde{w}}\big)^{\frac{\ell}{2(k+1)}}B^{\frac{2k+1-\ell}{2(k+1)}}\big(\log p^{n-\tilde{w}}\big)^{\tilde{\delta}}\\
&=p^{\tilde{r}}\big(p^{n-\tilde{w}}\big)^{\frac{k}{2(k+1)}}B^{\frac{\ell+1}{2(k+1)}}\big(\log p^{n-\tilde{w}}\big)^{\tilde{\delta}}\left(\frac{p^{n-\tilde{w}}}{B^2}\right)^{\frac{\ell-k}{2(k+1)}}\\
&\ll p^{\tilde{r}}\left(\frac{p^{n-\tilde{w}}}B\right)^{\frac{k}{2(k+1)}}B^{\frac{k+\ell+1}{2(k+1)}}\big(\log p^{n-\tilde{w}}\big)^{\tilde{\delta}}
\end{align*}
for all $B\geqslant p^{(n-\tilde{w})/2}$. Using the implication $H(\delta)\implies H_{\text{sm}}(\delta)$ of Lemma~\ref{EquivalenceSharpSmooth}, it follows from the first part of the proof that the same estimate also holds for all $B\leqslant p^{(n-\tilde{w})/2}$ and thus for all $1\leqslant B\leqslant p^{n-\tilde{w}}$. The same upper bound follows for $S$ in light of $H_{\text{sm}}(\delta)^{sq}\implies H(\delta)^{sq}$ of Lemma~\ref{EquivalenceSharpSmooth}, since no extra factor of $\delta_{1/2}$ appears in the power of the logarithm because the exponent pair $\big(\frac{k}{2(k+1)},\frac{k+\ell+1}{2(k+1)}\big)$ cannot equal $(\frac12,\frac12)$.

The stated $p$-adic exponent datum for $0<k\leqslant\frac12\leqslant\ell<1$ follows from collecting all conditions \eqref{AprocessCondition2}, \eqref{AprocessCondition2prime}, \eqref{AprocessCondition4}, \eqref{AprocessCondition4Prime}, \eqref{AprocessCondition5}, \eqref{ConditionOnLambdaTilde} \eqref{AprocessCondition6}, \eqref{SecondRangeFornw}, \eqref{AprocessConditionDual}, and \eqref{AprocessDualCondition}.

The remaining cases $k=0$ and $\ell=1$ follow directly by convex interpolation from bounds given by known $p$-adic exponent data. If $k=0$, we interpolate between the bound \eqref{SharpDef} for the given $p$-adic exponent datum and the trivial bound $S\ll B$ to obtain
\[ S\ll \left(p^rB^{\ell}\big(\log p^{n-\tilde{w}}\big)^{\delta}\right)^{1/2}B^{1/2}=p^{r/2}B^{\frac{\ell+1}2}\big(\log p^{n-\tilde{w}}\big)^{\delta/2}. \]
If $\ell=1$, we use convex interpolation between the bound \eqref{SharpDef} for the first nontrivial datum \eqref{ExponentDatumHalfHalf} and the trivial bound $S\ll B$ as follows:
\[ S\ll \left(p^{(n-\tilde{w})/2}\log p^{n-\tilde{w}}\right)^{\frac{k}{k+1}}B^{\frac1{k+1}}=\left(\frac{p^{n-\tilde{w}}}B\right)^{\frac{k}{2(k+1)}}B^{\frac{k+2}{2(k+1)}}\big(\log p^{n-\tilde{w}}\big)^{\frac{k}{k+1}}. \]
\end{proof}

We comment briefly on possible optimality of the obtained value of $\tilde{n}_0$. The condition that $\tilde{n}_0\geqslant (1+\epsilon)n_0$ with a fixed $\epsilon=\epsilon(k,\ell)>0$ appears essential to Weyl differencing method. We do not believe that, for example, a condition of the form $\tilde{n}_0\geqslant n_0+C(k,\ell)$ can suffice in general. Substantial effort was put into making $1+\epsilon$ as small as we could, but it is not clear that the factor of $\frac32$ is necessarily optimal.

While processes engaging some ``$q$-variant'' of the Weyl-van der Corput inequality have been used by previous authors, our approach in Theorem~\ref{Aprocess} is, to our knowledge, novel in a number of ways, including the use of the $\big(\frac12,\frac12)$ pair to reduce the required $\tilde{n}_0$ and of the summation formula to shorten the sum in the range $B\gg p^{(n-\tilde{w})/2}$ and obtain what are probably nearly optimal exponents, as well as the entire paradigm of the method applying to classes of $p$-adic analytic functions.

\section{Application to \texorpdfstring{$L$-functions}{L-functions}}
\label{LFunctionSection}

The relevance of the class $\mathbf{F}$ to Dirichlet $L$-functions stems from the following (in hindsight) simple Lemma~\ref{ParametrizationOfCharacters}. In a more elementary form, this line of reasoning seems to have been first used in the context of analysis of $L$-functions by Postnikov~\cite{Postnikov}.

Recall that the group $(\mathbb{Z}/p^n\mathbb{Z})^{\times}$ of invertible congruence classes modulo $p^n$ is cyclic for an odd prime $p$ and a product of the subgroup $\{\pm 1\}$ and a cyclic group of order $2^{n-2}$ if $p=2$ and $n\geqslant 2$ (we ignore the trivial case $p^n=2$ here). Let
\[ \kappa_1=1+\iota'(2), \]
and let $(\mathbb{Z}/p^n\mathbb{Z})^{\times}_1=\{a+p^n\mathbb{Z}:a\equiv 1\bmod{p^{\kappa_1}}\}$. We have that $(\mathbb{Z}/p^n\mathbb{Z})^{\times}=G_n\times(\mathbb{Z}/p^n\mathbb{Z})^{\times}_1$ with a subgroup $G_n\cong(\mathbb{Z}/p^{\kappa_1}\mathbb{Z})^{\times}$.

Let $\Gamma_n$ denote the set of all Dirichlet characters modulo $p^n$, and let $\Gamma_{n1}$ denote the set of all characters of the subgroup $(\mathbb{Z}/p^n\mathbb{Z})^{\times}_1$. We have the isomorphism of dual groups $\Gamma_n=\hat{G}_n\times\Gamma_{n1}$, and restriction to $1+p^{\kappa_1}\mathbb{Z}$ gives a natural surjection $\Gamma_n\twoheadrightarrow\Gamma_{n1}$.

\begin{lemma}
\label{ParametrizationOfCharacters}
Let $n\geqslant\kappa_1$ be given. For every $a=a_0p^{-n}\in p^{-n}\mathbb{Z}_p$,
\[ \chi_a\big(1+p^{\kappa_1}t\big)=e\big(a\log_p(1+p^{\kappa_1}t)\big)=e\left(\frac{a_0\log_p(1+p^{\kappa_1}t)}{p^n}\right) \]
defines a character $\chi_a\in\Gamma_{n1}$. Moreover, every character of $\Gamma_{n1}$ is of this form, and the correspondence $a\mapsto\chi_a$ induces an isomorphism $p^{-n}\mathbb{Z}_p/p^{-\kappa_1}\mathbb{Z}_p\cong\Gamma_{n1}$, with primitive characters being those corresponding to $p^{-n}\mathbb{Z}^{\times}_p/p^{-\kappa_1}\mathbb{Z}_p$.
\end{lemma}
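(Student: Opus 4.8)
The plan is to deduce the whole statement from the single structural fact recorded in Section~\ref{Preliminaries}: $\log_p$ (written $\lambda$ there) restricts to an isometric group isomorphism from the multiplicative group $1+p^{\kappa_1}\mathbb{Z}_p$ onto the additive group $p^{\kappa_1}\mathbb{Z}_p$, with inverse $\exp_p$. First I would record the uniform-in-$p$ identification $B_{r_p}=p^{\kappa_1}\mathbb{Z}_p$: since $\rho_p=1/(p-1)$, the condition $|t|_p<r_p=p^{-\rho_p}$ amounts to $|t|_p\leqslant p^{-1}$ when $p$ is odd and to $|t|_p\leqslant p^{-2}$ when $p=2$, and in both cases this says $t\in p^{\kappa_1}\mathbb{Z}_p$ because $\kappa_1=1+\iota'(2)$. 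The isometries $\varepsilon_0,\lambda\colon B_{r_p}\to B_{r_p}$ with $\varepsilon_0\circ\lambda=\lambda\circ\varepsilon_0=\mathrm{id}$ then furnish the claimed isomorphism; its multiplicativity, $\log_p(uv)=\log_p u+\log_p v$ for $u,v\in 1+p^{\kappa_1}\mathbb{Z}_p$, follows from $uv=\exp_p(\log_p u)\exp_p(\log_p v)=\exp_p(\log_p u+\log_p v)$ and injectivity of $\exp_p$ on $B_{r_p}$. Being an isometry, $\log_p$ carries $\{x\equiv 1\bmod{p^n}\}=1+p^n\mathbb{Z}_p$ precisely onto $p^n\mathbb{Z}_p$; since $(\mathbb{Z}/p^n\mathbb{Z})^{\times}_1=(1+p^{\kappa_1}\mathbb{Z}_p)/(1+p^n\mathbb{Z}_p)$, passing to quotients yields a group isomorphism $\overline{L}\colon(\mathbb{Z}/p^n\mathbb{Z})^{\times}_1\to p^{\kappa_1}\mathbb{Z}_p/p^n\mathbb{Z}_p$ sending the class of $1+p^{\kappa_1}t$ to $\log_p(1+p^{\kappa_1}t)\bmod{p^n\mathbb{Z}_p}$. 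In particular $(\mathbb{Z}/p^n\mathbb{Z})^{\times}_1$ is cyclic of order $p^{n-\kappa_1}$.

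Next I would transport the dual group $\Gamma_{n1}=\widehat{(\mathbb{Z}/p^n\mathbb{Z})^{\times}_1}$ through $\overline{L}$ using the concrete self-duality of the finite group $p^{\kappa_1}\mathbb{Z}_p/p^n\mathbb{Z}_p$. For $a=a_0p^{-n}\in p^{-n}\mathbb{Z}_p$, the assignment $z\mapsto e(az)$ is a well-defined character of $p^{\kappa_1}\mathbb{Z}_p/p^n\mathbb{Z}_p$ (as $a\,p^n\mathbb{Z}_p\subseteq\mathbb{Z}_p$ and $e$ is $\mathbb{Z}_p$-periodic and additive), and it is trivial exactly when $a\,p^{\kappa_1}\mathbb{Z}_p\subseteq\mathbb{Z}_p$, i.e.\ when $a\in p^{-\kappa_1}\mathbb{Z}_p$; thus $a\mapsto(z\mapsto e(az))$ is an injective homomorphism $p^{-n}\mathbb{Z}_p/p^{-\kappa_1}\mathbb{Z}_p\to\widehat{p^{\kappa_1}\mathbb{Z}_p/p^n\mathbb{Z}_p}$, hence an isomorphism since both sides have order $p^{n-\kappa_1}$. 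Pulling this back along $\overline{L}$, the character of $(\mathbb{Z}/p^n\mathbb{Z})^{\times}_1$ attached to $a$ is precisely $1+p^{\kappa_1}t\mapsto e\big(a\log_p(1+p^{\kappa_1}t)\big)=e\big(a_0\log_p(1+p^{\kappa_1}t)/p^n\big)$, which is $\chi_a$. This one identification delivers simultaneously that $\chi_a$ is a well-defined element of $\Gamma_{n1}$, that $a\mapsto\chi_a$ is a homomorphism with kernel exactly $p^{-\kappa_1}\mathbb{Z}_p$, that it is onto, and hence that it induces the stated isomorphism $p^{-n}\mathbb{Z}_p/p^{-\kappa_1}\mathbb{Z}_p\cong\Gamma_{n1}$.

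For the primitivity statement (meaningful for $n>\kappa_1$; when $n=\kappa_1$ all three groups are trivial and there is nothing to prove), I would invoke the standard criterion that a Dirichlet character $\chi$ modulo $p^n$, which restricts to some $\chi_a\in\Gamma_{n1}$, is primitive if and only if it is nontrivial on $\ker\big((\mathbb{Z}/p^n\mathbb{Z})^{\times}\to(\mathbb{Z}/p^{n-1}\mathbb{Z})^{\times}\big)$. For $n>\kappa_1$ that kernel is the order-$p$ subgroup $\{1+p^{n-1}t:t\bmod p\}$, which lies inside $(\mathbb{Z}/p^n\mathbb{Z})^{\times}_1$; so primitivity depends only on $\chi_a$, and since $(\mathbb{Z}/p^n\mathbb{Z})^{\times}_1$ is cyclic, $\chi_a$ comes from a primitive character iff $\chi_a$ generates $\Gamma_{n1}$, iff $a$ generates $p^{-n}\mathbb{Z}_p/p^{-\kappa_1}\mathbb{Z}_p\cong\mathbb{Z}/p^{n-\kappa_1}\mathbb{Z}$, iff $a_0\in\mathbb{Z}^{\times}_p$, i.e.\ $a\in p^{-n}\mathbb{Z}^{\times}_p$. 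To see this directly I would note that $\log_p(1+p^{n-1}t)\equiv p^{n-1}t\pmod{p^n}$ for $n>\kappa_1$ — the $k$-th term $p^{k(n-1)}t^k/k$ of the series has $p$-order $\geqslant k(n-1)-\ord_p k$, which is at least $n$ for every $k\geqslant 2$ — so that $\chi_a(1+p^{n-1}t)=e(a_0t/p)$, nontrivial in $t$ exactly when $p\nmid a_0$.

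I do not anticipate a genuine obstacle. The only points requiring care are the uniform-in-$p$ identification $B_{r_p}=p^{\kappa_1}\mathbb{Z}_p$ (with $\kappa_1=2$ for $p=2$), the bookkeeping in the self-duality of $p^{\kappa_1}\mathbb{Z}_p/p^n\mathbb{Z}_p$ and the direction of the pullback along $\overline{L}$, and the elementary Taylor estimate isolating the primitivity condition. If a more pedestrian exposition were preferred, well-definedness of $\chi_a$ could be checked by hand (if $t\equiv t'\bmod{p^{n-\kappa_1}}$ then $\log_p(1+p^{\kappa_1}t)\equiv\log_p(1+p^{\kappa_1}t')\bmod{p^n}$ by the isometry, so the argument of $e$ changes by an element of $\mathbb{Z}_p$), and surjectivity could be read off from the order count $|\Gamma_{n1}|=|(\mathbb{Z}/p^n\mathbb{Z})^{\times}_1|=p^{n-\kappa_1}$ recorded in the preamble to the lemma.
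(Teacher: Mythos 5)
Your argument is correct and rests on the same core ingredients as the paper's proof: the addition formula for $\log_p$ to get multiplicativity, the isometry $|\log_p(1+x)|_p=|x|_p$ on $p^{\kappa_1}\mathbb{Z}_p$ (equivalently $M_r\lambda\doteq r$) to identify both the periodicity modulus and the kernel of $a\mapsto\chi_a$, and a counting argument for surjectivity. You package these as an explicit chain of isomorphisms $(\mathbb{Z}/p^n\mathbb{Z})^\times_1\cong p^{\kappa_1}\mathbb{Z}_p/p^n\mathbb{Z}_p\cong\widehat{p^{\kappa_1}\mathbb{Z}_p/p^n\mathbb{Z}_p}$ via self-duality, and you spell out more carefully than the paper why primitivity of a Dirichlet character modulo $p^n$ depends only on its restriction $\chi_a$ (using that for $n>\kappa_1$ the conductor-$p^n$ condition is detected on $1+p^{n-1}\mathbb{Z}/p^n\mathbb{Z}\subset(\mathbb{Z}/p^n\mathbb{Z})^\times_1$, together with $\log_p(1+p^{n-1}t)\equiv p^{n-1}t\bmod p^n$), which fills in what the paper leaves implicit in its appeal to $\Gamma_{n-1,1}\subset\Gamma_{n1}$ without changing the route.
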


\begin{proof}
We saw in section~\ref{Preliminaries} that the series $\lambda(x)=\log_p(1+x)$ has $r_{\lambda}=1$ and $M_r\lambda\doteq r$ for all $r<r_p$. Since
\[ \lambda(x+y+xy)=\lambda(x)+\lambda(y) \]
for every $x,y\in B_1$, it follows that $\chi_a$ is a multiplicative function $\chi_a:1+p^{\kappa_1}\mathbb{Z}\to S^1$. On the other hand, since $p^{\kappa_1}t\in B_{r_p}$ for every $t\in\mathbb{Z}$, we have that
\[ \ord_p\big(a\log_p(1+p^{\kappa_1}t)\big)=\ord_p\big(ap^{\kappa_1}t\big). \]
Note that $\chi_a\big(1+p^{\kappa_1}t\big)=1$ if and only if $a\log_p(1+p^{\kappa_1}t)\in\mathbb{Z}_p$. We see that $1+p^n\mathbb{Z}\subseteq\ker\chi_a$, so that $\chi_a$ is indeed a character of $(\mathbb{Z}/p^n\mathbb{Z})^{\times}_1$.

It is immediate that $a\mapsto\chi_a$ is a homomorphism of groups $p^{-n}\mathbb{Z}_p\to\Gamma_{n1}$. Moreover, we see that $\chi_a$ is the trivial character if and only if $ap^{\kappa_1}t\in\mathbb{Z}_p$ for every $t\in\mathbb{Z}$ (and in particular for $t=1$), that is, exactly when $a\in p^{-\kappa_1}\mathbb{Z}_p$, so that we have a monomorphism $p^{-n}\mathbb{Z}_p/p^{-\kappa_1}\mathbb{Z}_p\to\Gamma_{n1}$. This must be an isomorphism since $|p^{-n}\mathbb{Z}_p/p^{-\kappa_1}\mathbb{Z}_p|=|\Gamma_{n1}|=p^{n-\kappa_1}$; in particular, every character of $\Gamma_{n1}$ is of the form $\chi_a$ for some $a\in p^{-n}\mathbb{Z}_p$. Since the characters of $\Gamma_{n-1,1}$ are consequently of the form $\chi_a$ for some $a\in p^{-n+1}\mathbb{Z}_p$, the primitive characters of $\Gamma_{n1}$ correspond to $a\in p^{-n}\mathbb{Z}^{\times}_p$.
\end{proof}

Lemma~\ref{ParametrizationOfCharacters} presents a parametrization of the restrictions to $1+p^{\kappa_1}\mathbb{Z}$ of Dirichlet characters modulo $p^n$ by classes of $p$-adic rationals. The isomorphism exhibited in the Lemma extends to an isomorphism of inductive limits $\mathbb{Q}_p/p^{-\kappa_1}\mathbb{Z}_p\cong\Gamma^{(p)}_1$, with $\Gamma^{(p)}_1=\cup_{n=1}^{\infty}\Gamma_{n1}$ being the group of restrictions of all Dirichlet characters modulo all non-negative powers of $p$ to $1+p^{\kappa_1}\mathbb{Z}$.

Let $\chi$ be a primitive character modulo $q>1$ ($q=p^n$ in our case). The Dirichlet $L$-function $L(s,\chi)$ continues to an entire function and satisfies the functional equation
\[ \left(\frac q{\pi}\right)^{s/2}\Gamma\left(\frac{s+\varsigma}2\right)L(s,\chi)=\varepsilon(\chi)\left(\frac q{\pi}\right)^{(1-s)/2}\Gamma\left(\frac{1-s+\varsigma}2\right)L(1-s,\bar{\chi}), \]
where $\varsigma=0$ or $1$ according as $\chi$ is even or odd, and
\[ \varepsilon(\chi)=\frac{i^{-\varsigma}}{\sqrt{q}}\sum_{m\bmod q}\chi(m)e\left(\frac mq\right) \]
is a unit multiple of the normalized Gauss sum (see \cite[Theorem 4.15 on page 84]{IwaniecKowalski}). We will use the following standard expansion of $L(\frac12,\chi)$ in terms of short Dirichlet polynomials.

\begin{lemma}[Approximate functional equation]
\label{AFE}
Let $\chi$ be a primitive character modulo $q>1$, and let $A$ be a positive integer. Then
\[ L\left(\frac12,\chi\right)=\sum_{m=1}^{\infty}\frac{\chi(m)}{\sqrt{m}}V\left(\frac m{\sqrt{q}}\right)+\varepsilon(\chi)\sum_{m=1}^{\infty}\frac{\overline{\chi(m)}}{\sqrt{m}}V\left(\frac m{\sqrt{q}}\right), \]
where $V(y)$ is a smooth function of $y>0$ defined by
\[ V(y)=\frac1{2\pi i}\int_{(3)}y^{-u}\left(\cos\frac{\pi u}{4A}\right)^{-4A}\frac{\Gamma\left(\frac14+\frac{u+\varsigma}2\right)}{\Gamma\left(\frac14+\frac{\varsigma}2\right)}\,\frac{\textnormal{d}u}u, \]
and $V(y)$ and its derivatives satisfy the estimates
\[ y^aV^{(a)}(y)\ll(1+y)^{-A},\qquad y^aV^{(a)}(y)=\delta_{a0}+\text{O}(y^{1/6}). \]
\end{lemma}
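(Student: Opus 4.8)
\textbf{Proof proposal for the Approximate Functional Equation (Lemma~\ref{AFE}).} The plan is the standard contour-shifting argument. First I would verify that $V(y)$ is well defined and holomorphic in $y > 0$: on the line $\mathrm{Re}\,u = 3$, the integrand decays rapidly because $(\cos(\pi u/(4A)))^{-4A}$ decays like $e^{-2\pi|\mathrm{Im}\,u|}$ as $|\mathrm{Im}\,u|\to\infty$ (the cosine grows exponentially in vertical strips of width $< 2A$), while $\Gamma(1/4 + (u+\varsigma)/2)$ grows only polynomially times $e^{-(\pi/4)|\mathrm{Im}\,u|}$; so the product decays like $e^{-(7\pi/4)|\mathrm{Im}\,u|}$ and the integral converges absolutely. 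Differentiation under the integral sign in $y$ is then justified by the same bound.

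Next, to obtain the AFE itself, I would start from the integral representation
\[
I(\chi) = \frac1{2\pi i}\int_{(3)} \Lambda\!\left(\tfrac12 + u, \chi\right) \left(\cos\tfrac{\pi u}{4A}\right)^{-4A} \frac{\Gamma\!\left(\tfrac14 + \tfrac{\varsigma}{2}\right)^{-1}\Gamma\!\left(\tfrac14 + \tfrac{u+\varsigma}{2}\right)}{\Gamma\!\left(\tfrac14+\tfrac{\varsigma}{2}\right)^{-1}}\,\frac{\mathrm{d}u}{u},
\]
or more cleanly, integrate $G(u)\,\Lambda(\tfrac12+u,\chi)\,(\cos\tfrac{\pi u}{4A})^{-4A}\,\tfrac{\mathrm{d}u}{u}$ where $\Lambda(s,\chi) = (q/\pi)^{s/2}\Gamma((s+\varsigma)/2)L(s,\chi)$ is the completed $L$-function and $G(u)$ is the ratio of Gamma factors that makes things match. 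Expanding $L(\tfrac12+u,\chi) = \sum_m \chi(m) m^{-1/2-u}$ termwise on $\mathrm{Re}\,u = 3$ and integrating term by term yields $\sum_m \chi(m)m^{-1/2}\,V(m/\sqrt q)$ once one absorbs the $(q/\pi)^{u/2}$ and the Gamma ratio into the definition of $V$ (here is where the precise shape of $V$ in the statement is pinned down — one should double-check the normalization $(q/\pi)$ versus $\sqrt q$ and that the $\cos$ factor and Gamma ratio are exactly as written). Then I shift the contour from $\mathrm{Re}\,u = 3$ to $\mathrm{Re}\,u = -3$. The only pole crossed is the simple pole of $1/u$ at $u = 0$ (the factor $(\cos(\pi u/(4A)))^{-4A}$ is entire since $4A$ is an even integer and $\cos$ has only simple zeros, so raising to the $-4A$ power would give poles — wait: $(\cos)^{-4A}$ \emph{does} have poles at the zeros of cosine, namely $u = 2A(2k+1)$, but these lie at $|\mathrm{Re}\,u| = 2A \geq 2 > $ nothing in $[-3,3]$ only if $2A > 3$; for the argument one takes $A$ large enough, or notes the zeros of $\cos(\pi u/(4A))$ are at $u = (2k+1)\cdot 2A$, all with $|u|\geq 2A$, hence outside $[-3,3]$ for $A\geq 2$). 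The residue at $u=0$ is $\Lambda(\tfrac12,\chi)\cdot G(0) = \Lambda(\tfrac12,\chi)$, giving $(q/\pi)^{1/4}\Gamma((1/2+\varsigma)/2)\Gamma(1/4+\varsigma/2)^{-1}L(\tfrac12,\chi) = $ (after the normalization) $L(\tfrac12,\chi)$. On the shifted line $\mathrm{Re}\,u = -3$, apply the functional equation $\Lambda(\tfrac12+u,\chi) = \varepsilon(\chi)\Lambda(\tfrac12-u,\bar\chi)$, substitute $u \mapsto -u$, and expand $L(\tfrac12+u,\bar\chi)$ on $\mathrm{Re}\,u = 3$; using that $\cos$ and the combinatorics of the Gamma ratio are symmetric under $u\mapsto -u$ (this requires $\varsigma$ to appear symmetrically, which it does since $\bar\chi$ has the same parity as $\chi$), one recovers $\varepsilon(\chi)\sum_m \overline{\chi(m)}m^{-1/2}V(m/\sqrt q)$. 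This gives the stated identity $L(\tfrac12,\chi) = \sum + \varepsilon(\chi)\sum$.

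Finally, for the bounds on $V$: the estimate $y^a V^{(a)}(y) \ll (1+y)^{-A}$ for large $y$ comes from shifting the contour in the defining integral of $V^{(a)}$ far to the right, to $\mathrm{Re}\,u = A$ (no poles are crossed for $0 < \mathrm{Re}\,u < 2A$ since the only issue is $u=0$ and the $\cos$-zeros at $|u| = 2A$), picking up $y^{-A}$ from $y^{-u}$; the remaining integral is bounded uniformly. For small $y$, the expansion $y^a V^{(a)}(y) = \delta_{a0} + \mathrm{O}(y^{1/6})$ comes from shifting left to $\mathrm{Re}\,u = -1/6 + \epsilon$: crossing $u = 0$ gives the residue $\delta_{a0}$ (the $a$-th derivative kills the constant for $a \geq 1$; for $a=0$ the residue of $y^{-u}/u$ at $0$ is $1$), and there is no pole of the Gamma factor $\Gamma(1/4 + (u+\varsigma)/2)$ in $-1/6 < \mathrm{Re}\,u < 0$ since its poles are at $u = -1/2-\varsigma-2k \leq -1/2$; the shifted integral is then $\mathrm{O}(y^{1/6})$ from the $y^{-u}$ factor. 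The \textbf{main obstacle} — really just a bookkeeping nuisance rather than a genuine difficulty — is getting the normalization of $V$ exactly right so that the residue at $u=0$ produces $L(\tfrac12,\chi)$ with coefficient exactly $1$ and so that the dual sum comes out with exactly $\varepsilon(\chi)$ and the \emph{same} function $V$; this forces the specific combination $(q/\pi)$, the $\cos$-power, and the Gamma-ratio $\Gamma(1/4 + (u+\varsigma)/2)/\Gamma(1/4+\varsigma/2)$ appearing in the statement, and one must check the functional-equation symmetry is respected by each factor. All of this is classical (cf.\ \cite[Theorem 5.3]{IwaniecKowalski}) and I would cite it, carrying out only the verification that the particular weight $(\cos(\pi u/(4A)))^{-4A}$ is admissible (even integer exponent, poles far out, rapid vertical decay) and that the derivative bounds hold with the stated exponents $A$ and $1/6$.
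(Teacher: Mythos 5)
Your proposal is correct and takes essentially the same approach as the paper: the paper's proof is simply the citation to Iwaniec--Kowalski, Theorem 5.3 and Proposition 5.4, with $G(u)=(\cos \pi u/4A)^{-4A}$, which is precisely the classical contour-shift argument you unpack (and you, too, conclude by citing that theorem). The only slip is cosmetic: on $\mathrm{Re}\,u=3$ the factor $(\cos(\pi u/(4A)))^{-4A}$ decays like $e^{-\pi|\mathrm{Im}\,u|}$, not $e^{-2\pi|\mathrm{Im}\,u|}$, but this does not affect convergence or anything downstream.
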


\begin{proof}
This is an instance of \cite{IwaniecKowalski}, Theorem 5.3 and Proposition 5.4 on pages 98--100, with $G(u)=(\cos \pi u/4A)^{-4A}$ as on page 99.
\end{proof}

We now arrive at the theorem in which a $p$-adic exponent datum will be used to estimate the central value $L(1/2,\chi)$.

\begin{theorem}
\label{EstimateInTermsOfkl}
Suppose that $\big(k,\ell,r,\delta,(n_0,u_0,\kappa_0,\lambda_0)\big)$ is a $p$-adic exponent datum. Let $\delta'=1$ if $\ell=k+\frac12$, and $\delta'=0$ otherwise.

If $\ell\geqslant k+1/2$, then, for every $\kappa\geqslant\max\big(\kappa_0(1,p),1+\iota'(2)\big)$ and with $r=r(1,p,\kappa,\infty)$, and for every $n\geqslant\max\big(n_0(1,p,\kappa,\infty)+\kappa,2\kappa\big)$ and every primitive Dirichlet character $\chi$ modulo $p^n$,
\[ L\left(\frac12,\chi\right)\ll p^{r+\kappa(1-k-\ell)}\big(p^n\big)^{\left[\tfrac{k+\ell}2-\tfrac14\right]}(\log q)^{\delta+\delta'}.\]

If $\ell<k+1/2$, then, for every $\kappa\geqslant\max\big(\kappa_0(1,p),\lambda_0(1,p),1+\iota'(4)\big)$ and with $r=r(1,p,\kappa,\kappa)$, and for every $n\geqslant\max\big(n_0(1,p,\kappa,\kappa)+\kappa,2\kappa+1+\iota'(12)\big)$ and every primitive Dirichlet character $\chi$ modulo $p^n$,
\[ L\left(\frac12,\chi\right)\ll p^{r+\kappa(1-k-\ell)}\big(p^n\big)^{\left[\tfrac{k+\ell}2-\tfrac14\right]}(\log q)^{\delta}.\]
\end{theorem}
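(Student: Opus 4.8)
The plan is to feed the approximate functional equation into the $p$-adic parametrization of characters and then apply the given exponent datum blockwise. First I would invoke Lemma~\ref{AFE} with a fixed, sufficiently large integer $A$ (entering only through implied constants): since $|\varepsilon(\chi)|=1$, it suffices to bound $\sum_m\chi(m)m^{-1/2}V(m/\sqrt q)$ and its analogue with $\overline\chi$ (which is again a primitive character modulo $p^n$). Using a smooth dyadic partition of unity I would write this as $\sum_N\Sigma_N$ over $N$ ranging over powers of $2$, where $\Sigma_N=\sum_m\chi(m)\phi_N(m)$ with $\phi_N$ smooth, supported on $m\asymp N$, and $\phi_N$ together with its rescaled derivatives $\ll N^{-1/2}(1+N/\sqrt q)^{-A}$; the decay in $V$ then renders all but a rapidly decreasing tail of the ranges $N\gtrsim\sqrt q$ negligible, leaving effectively $O(\log q)$ blocks with $N\lesssim\sqrt q$.

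For a single block I would fix $\kappa$ as in the hypotheses, drop the terms with $p\mid m$, and split the sum over $m\asymp N$ into residue classes $m\equiv c\pmod{p^\kappa}$, $p\nmid c$. Writing $m=c(1+p^\kappa c't)$ with $cc'\equiv1\pmod{p^n}$ and using Lemma~\ref{ParametrizationOfCharacters} (applicable since $\kappa\geqslant1+\iota'(2)$), one gets $\chi(c+p^\kappa t)=\chi(c)\,e\big(f_c(t)/p^n\big)$ with $f_c(t)=a_0\log_p(1+p^\kappa c't)$, and by \eqref{FcClass} $f_c\in\mathbf{F}(\kappa,1,\kappa,\infty,\infty,c',a_0c')$ (here $a_0\in\mathbb{Z}_p^{\times}$ precisely because $\chi$ is primitive). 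Thus, for each $c$, the inner $t$-sum is a smooth exponential sum of the kind controlled by a $p$-adic exponent datum, with $w=\kappa$, $y=1$, $\iota'(y)=0$, hence $p^{n-w-\kappa-\iota'}=p^{n-2\kappa}$, and of length $\asymp N/p^\kappa\leqslant p^{n-2\kappa}$ (using $n\geqslant2\kappa$). If $\ell\geqslant k+\tfrac12$ I would apply the datum directly, in the smooth form $H_{\textnormal{sm}}(\delta)$ of Lemma~\ref{EquivalenceSharpSmooth}, to each inner sum; summing over the $\varphi(p^\kappa)$ classes $c$ and simplifying gives, with $r=r(1,p,\kappa,\infty)$, the block bound $\Sigma_N\ll(1+N/\sqrt q)^{-A}\,p^{r+\kappa(1-\ell)}p^{(n-\kappa)k}N^{\ell-k-1/2}(\log q)^{\delta}$. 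If $\ell<k+\tfrac12$ I would instead first apply the Summation Formula (Theorem~\ref{SummationFormula}) — whose hypotheses \eqref{BProcessCondition1} reduce here to $\kappa\geqslant1+\iota'(4)$ and $n>2\kappa+\iota'(12)$, in force by assumption — replacing each inner $t$-sum by a dual sum of length $\asymp p^{n-\kappa}/N$ whose phase lies, by \eqref{DualFClasses}, in $\mathbf{F}(\kappa,1,\kappa,\tilde\lambda,\ldots)$ with $\tilde\lambda=\min(\kappa-\rho_p(1),\infty)=\kappa$ in the fourth slot, which is exactly why this case records $r=r(1,p,\kappa,\kappa)$ and imposes $\kappa\geqslant\lambda_0(1,p)$; applying the datum to the dual sum (in the form $H^{\sharp}_{\textnormal{sm}}(\delta)$, hence through the translation-invariant $\|\widehat{h_c}\|_{\star}$) and summing over $c$ gives $\Sigma_N\ll(1+N/\sqrt q)^{-A}\,p^{r+\kappa(1-k-\ell)}(p^n)^{\ell-1/2}N^{k+1/2-\ell}(\log q)^{\delta}$.

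Finally I would sum the block bounds over dyadic $N$. In each case the exponent of $N$ is $\geqslant0$ (namely $\ell-k-\tfrac12\geqslant0$, resp.\ $k+\tfrac12-\ell>0$), so the geometric sum over $N\lesssim\sqrt q$ is dominated by the term $N\asymp\sqrt q$, except that when $\ell=k+\tfrac12$ the exponent vanishes and summing $O(\log q)$ terms costs an extra $\log q$ — this is the origin of $\delta'$; an elementary computation identifies the contribution at $N\asymp\sqrt q$ with $p^{r+\kappa(1-k-\ell)}(p^n)^{(k+\ell)/2-1/4}$ times $(\log q)^{\delta+\delta'}$, resp.\ $(\log q)^{\delta}$, and the tail $N\gtrsim\sqrt q$ — treated by the same estimates in blocks of length $\sqrt q$ (or trivially when $N\gtrsim p^{n-\kappa}$), damped by the factor $(1+N/\sqrt q)^{-A}$ — is no larger. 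The only real work, which I expect to be the main obstacle, is this bookkeeping of ranges: separately handling the short blocks $N<p^\kappa$, where the trivial bound gives $N^{-1/2}|\Sigma_N|\ll\sqrt N<p^{\kappa/2}$, absorbed because $n\geqslant2\kappa$ and $k+\ell\geqslant\tfrac12$ (together with $r\geqslant0$) force the asserted main term to be $\gg p^{\kappa/2}$; checking that every appeal to the exponent datum and to Theorem~\ref{SummationFormula} stays within its stated range (notably the constraint $N/p^\kappa\leqslant p^{n-2\kappa}$, i.e.\ $N\leqslant p^{n-\kappa}$, for the direct application); and confirming uniformity in $p$, which is automatic once $\kappa$ and $A$ are fixed.
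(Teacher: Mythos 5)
Your argument is correct and reaches exactly the stated conclusion, but it does so by a slightly different organization than the paper. The paper performs a single summation by parts against the smooth weight $W_c(t)=(c+p^\kappa t)^{-1/2}V\big((c+p^\kappa t)/p^{n/2}\big)$, substitutes $t=(p^{n/2}\tau-c)p^{-\kappa}$, and reads off the four resulting convergent improper integrals, the logarithmic divergence at $\tau\to0$ of $\int|V(\tau)|\tau^{\ell-k-3/2}\,\mathrm d\tau$ when $\ell=k+\tfrac12$ producing $\delta'$; you instead dyadically decompose the Dirichlet polynomial into $O(\log q)$ blocks, apply the smooth form of the datum block by block, and identify $\delta'$ as the cost of summing $O(\log q)$ blocks with a vanishing $N$-exponent. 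These are two realizations of the same bookkeeping; yours is more standard, the paper's keeps a single cutoff throughout and thereby handles the tail $N\gtrsim\sqrt q$ automatically through the $t\geqslant p^{n-2\kappa}$ branch of the bound on $\tilde S(t)$. For $\ell<k+\tfrac12$, the paper applies the abstract $B$-process (Theorem~\ref{Bprocess}) to manufacture a new datum $\big(\ell-\tfrac12,k+\tfrac12,\ldots\big)$ and feeds it back into the first case, whereas you unroll the $B$-process by invoking Theorem~\ref{SummationFormula} directly on each inner sum and estimating the dual sum with the original datum in the $H^{\sharp}_{\mathrm{sm}}$ form; this is mathematically the same mechanism, just inlined rather than modularized, and you correctly read off the substitutions $y\mapsto 1$, $w=\kappa$, $\tilde\lambda=\kappa$, $\breve u=\infty$ and hence the hypotheses $\kappa\geqslant\max(\kappa_0,\lambda_0,1+\iota'(4))$ and $n\geqslant 2\kappa+1+\iota'(12)$ and the appearance of $r(1,p,\kappa,\kappa)$. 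Two minor points worth tightening: you should use $H^{\sharp}_{\mathrm{sm}}(\delta)$ (with the explicit $\|h\|_\star$) in both cases, not just the second, since uniformity over the rescaled dyadic cutoffs $\phi_N$ is precisely what the $\star$-norm delivers; and the tail $N\gtrsim\sqrt q$ deserves one extra line — for $\sqrt q\lesssim N\leqslant p^{n-\kappa}$ estimate in sub-blocks of $t$-length $p^{n-2\kappa}$ using the monotonicity in $B$ of the right side of \eqref{SharpDef}, and for $N>p^{n-\kappa}$ the factor $(1+N/\sqrt q)^{-A}$ with $A$ large kills the trivial bound — exactly what your parenthetical gestures at, and exactly what the paper's two-branch bound on $\tilde S(t)$ encodes.
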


\begin{proof}
Using Lemma~\ref{AFE} with $A=2$, we can write $L(1/2,\chi)=S+\varepsilon(\chi)S'$, where
\[ S=\sum_{m=1}^{\infty}\frac{\chi(m)}{\sqrt{m}}V\left(\frac{m}{p^{n/2}}\right),\quad S'=\sum_{m=1}^{\infty}\frac{\overline{\chi(m)}}{\sqrt{m}}V\left(\frac m{p^{n/2}}\right). \]
We will prove an upper bound for $S$; the estimate on the sum $S'$ is exactly the same with $\chi$ replaced by $\bar{\chi}$. We first consider the case $\ell\geqslant k+1/2$.

For every $1\leqslant c\leqslant p^{\kappa}$ such that $p\nmid c$, fix an integer $c'$ with $cc'\equiv 1\pmod{p^n}$. We can decompose $S$ as
\begin{align*}
S&=\sum_{1\leqslant c\leqslant p^{\kappa}, p\nmid c}\sum_{m=0}^{\infty}\frac{\chi\big(c+p^{\kappa}m\big)}{\sqrt{c+p^{\kappa}m}}V\left(\frac{c+p^{\kappa}m}{p^{n/2}}\right)\\
&=\sum_{1\leqslant c\leqslant p^{\kappa}, p\nmid c}\chi(c)\sum_{m=1}^{\infty}\chi\big(1+p^{\kappa}c'm\big)W_c(m)+\text{O}\big(p^{\kappa/2}\big)
\end{align*}
with the cutoff function
\[ W_c(t)=\frac1{\sqrt{c+p^{\kappa}t}}V\left(\frac{c+p^{\kappa}t}{p^{n/2}}\right). \]
According to Lemma~\ref{ParametrizationOfCharacters}, the values of the primitive character $\chi$ modulo $p^n$ on $1+p^{\kappa_1}\mathbb{Z}$ are given by a character $\chi_a$ for some $a=a_0p^{-n}$, $a_0\in\mathbb{Z}^{\times}_p$. Since $\kappa\geqslant\kappa_1$, we can write
\begin{equation}
\label{SumAspAdic}
S=\sum_{1\leqslant c\leqslant p^{\kappa},p\nmid c}\chi(c)\sum_{m=1}^{\infty}e\left(\frac{a_0\log_p(1+p^{\kappa}c'm)}{p^n}\right)W_c(m)+\text{O}\big(p^{\kappa/2}\big).
\end{equation}

Recall from \eqref{FcClass} that the phase $f_c(t)=a_0\log_p(1+p^{\kappa}c't)$ belongs to the class $\mathbf{F}(\kappa,1,\kappa,\infty,\infty,c',a_0c')$. We estimate the inner sum $S(c)$ in \eqref{SumAspAdic} using a summation by parts argument similar to the one used in the proof of Lemma~\ref{EquivalenceSharpSmooth}. Let
\[ \tilde{S}(t)=\sum_{1\leqslant m\leqslant t}e\left(\frac{f_c(m)}{p^n}\right) \]
if $t\geqslant 1$, and $\tilde{S}(t)=0$ for $t<1$. Since
\[ n-\kappa\geqslant n_0,\quad \kappa\geqslant\kappa_0, \]
we can estimate $\tilde{S}(t)$ using the given $p$-adic exponential datum to find that
\begin{equation}
\label{TildeDatumConclusion}
\tilde{S}(t)\ll p^r\left(\frac{p^{n-2\kappa}}{t}\right)^kt^{\ell}(\log q)^{\delta}
\end{equation}
for $1\leqslant t\leqslant p^{n-2\kappa}$, and, more generally, for all $t\geqslant 0$,
\[ \tilde{S}(t)\ll p^r\left(p^{(n-2\kappa)k}t^{\ell-k}+\frac{t}{p^{(n-2\kappa)(1-\ell)}}\right)(\log q)^{\delta}. \]
Using summation by parts, we obtain
\begin{align*}
S(c)&=\int_{1-0}^{\infty}W_c(t)\,\text{d}\tilde{S}(t)=W_c(t)\tilde{S}(t)\bigg|_{1-0}^{\infty}-\int_{1}^{\infty}\tilde{S}(t)W'_c(t)\,\text{d}t\\
&\ll p^r(\log q)^{\delta}\int_1^{\infty}\left(p^{(n-2\kappa)k}t^{\ell-k}+\frac{t}{p^{(n-2\kappa)(1-\ell)}}\right)\times\\
&\mskip 80mu\times\left(\frac{p^{\kappa}}{(c+p^{\kappa}t)^{3/2}}\left|V\left(\frac{c+p^{\kappa}t}{p^{n/2}}\right)\right|+\frac{p^{\kappa-n/2}}{\sqrt{c+p^{\kappa}t}}\left|V'\left(\frac{c+p^{\kappa}t}{p^{n/2}}\right)\right|\right)\,\text{d}t.
\end{align*}
Introducing a substitution $t=(p^{n/2}\tau-c)p^{-\kappa}$, we find that
\begin{align*}
S(c)&\ll p^{r-n/4}(\log q)^{\delta}\int_{\tau_0}^{\infty}\left(p^{(n-2\kappa)(k+\ell)/2}\tau^{\ell-k}+p^{(n-2\kappa)(\ell-1/2)}\tau\right)\times\\
&\mskip 360mu\times\left(\frac{|V(\tau)|}{\tau^{3/2}}+\frac{|V'(\tau)|}{\sqrt{\tau}}\right)\,\text{d}\tau,
\end{align*}
where $\tau_0\geqslant p^{\kappa-n/2}$. Multiplying out the integrand, we obtain a sum of four improper integrals. In light of the asymptotic behavior of $V(\tau)$, all four of these integrals converge absolutely when extended to $(0,\infty)$ if $\ell-k>1/2$; in the case $\ell-k=1/2$, the same is true except that the integral of $|V(\tau)|\tau^{\ell-k-3/2}$ has a logarithmic singularity at zero. We thus have that
\begin{align*}
S(c)&\ll p^{r-n/4}(\log q)^{\delta}\left(p^{(n-2\kappa)(k+\ell)/2}(\log q)^{\delta'}+p^{(n-2\kappa)(\ell-1/2)}\right)\\
&\ll p^{r-(k+\ell)\kappa}\big(p^n\big)^{\left[\tfrac{k+\ell}2-\tfrac14\right]}(\log q)^{\delta+\delta'},
\end{align*}
since $(k+\ell)/2\geqslant\ell/2\geqslant\ell-1/2$.

Going back to \eqref{SumAspAdic}, we have that
\[ S\ll p^{r+\kappa(1-k-\ell)}\big(p^n\big)^{\left[\tfrac{k+\ell}2-\tfrac14\right]}(\log q)^{\delta+\delta'}+p^{\kappa/2}. \]
Since the estimate \eqref{TildeDatumConclusion} holds for all $1\leqslant t\leqslant p^{n-2\kappa}$, we know from \eqref{NoBetterThanSqrt} that its right-hand side is greater than $t^{1/2}$ throughout the same range. In particular, for $t=p^{n/2-\kappa}$, we find that
\[ p^r\big(p^{n/2-\kappa}\big)^{k+\ell}(\log q)^{\delta}\geqslant p^{n/4-\kappa/2}, \]
from which it follows that the first term dominates in our estimate of $S$. This completes the proof in the case $\ell\geqslant k+1/2$.

If $\ell<k+1/2$, we apply the $B$-process (Theorem~\ref{Bprocess}) to the given exponential datum and obtain a new datum
\[ B\big(k,\ell,r,\delta,(n_0,u_0,\kappa_0,\lambda_0)\big)=\left(\ell-\frac12,k+\frac12,\tilde{r},\tilde{\delta},(\tilde{n}_0,\tilde{u}_0,\tilde{\kappa}_0,\tilde{\lambda}_0)\right), \]
where
\begin{gather*}
\tilde{r}(1,p,\kappa,\infty)=r(1,p,\kappa,\kappa),\quad \tilde{\delta}=\delta+\delta_{01}=\delta,\\
\tilde{\kappa}_0(1,p)=\max\big(1+\iota'(4),\kappa_0(1,p),\lambda_0(1,p)\big),\\
\tilde{n}_0(1,p,\kappa,\infty)=\max(\kappa+1+\iota'(12),n_0(1,p,\kappa,\kappa)\big).
\end{gather*}
Since $k+1/2>(\ell-1/2)+1/2$, the first case of our theorem applies to this new $p$-adic exponent datum; this gives the stated result.
\end{proof}

We remark that the proof of Theorem~\ref{EstimateInTermsOfkl} applies verbatim to estimation of the values $L(1/2+it,\chi)$ at any point along the critical line. Using the appropriate approximate functional equation from \cite[Theorem 5.3 and Proposition 5.4]{IwaniecKowalski}, and denoting
\begin{gather*}
V_s(y)=\frac1{2\pi i}\int_{(3)}y^{-u}\left(\cos\frac{\pi u}{4A}\right)^{-4A}\frac{\Gamma\big(\frac12s+\frac{u+\varsigma}2\big)}{\Gamma\big(\frac12s+\frac{\varsigma}2\big)}\,\frac{\text{d}u}u,\\
W_c[s](t)=\frac1{(c+p^{\kappa}t)^s}V_s\left(\frac{c+p^{\kappa}t}{p^{n/2}}\right),
\end{gather*}
we find as above that
\begin{align*}
L\left(\frac12+it,\chi\right)&\ll\int_1^{\infty}|\tilde{S}(\tau)|\Big(\big|W_c[\tfrac12+it]'(\tau)\big|+\big|W_c[\tfrac12-it\big]'(\tau)\big|\Big)\,\text{d}\tau+p^{\kappa/2}\\
&\ll p^{r+\kappa-n/4}(\log q)^{\delta}\int_{\tau_0}^{\infty}\left(p^{(n-2\kappa)(k+\ell)/2}\tau^{\ell-k}+p^{(n-2\kappa)(\ell-1/2)}\tau\right)\times\\
&\qquad\qquad\times\left(\frac{(3+|t|)|V_{1/2+it}(\tau)|}{\tau^{3/2}}+\frac{|V'_{1/2+it}(\tau)|}{\sqrt{\tau}}\right)\,\text{d}\tau+p^{\kappa/2},
\end{align*}
with $\tau_0\geqslant p^{\kappa-n/2}$. Using the asymptotic $y^aV_{1/2+it}^{(a)}(y)\ll\big(1+y/\sqrt{3+|t|}\big)^{-A}$ and proceeding as above, we conclude that
\begin{align*}
L\left(\frac12+it,\chi\right)&\ll (3+|t|)^{\tfrac{\ell-k}2+\tfrac34}p^{r+\kappa-\tfrac n4+(n-2\kappa)\tfrac{k+\ell}2}(\log q)^{\delta+\delta'}\\
&\qquad +(3+|t|)^{\tfrac54}p^{r+\kappa-\tfrac n4+(n-2\kappa)\big(\ell-\tfrac12\big)}(\log q)^{\delta}+p^{\kappa/2}\\
&\ll (3+|t|)^{\tfrac54}p^{r+\kappa(1-k-\ell)}\big(p^n\big)^{\big[\tfrac{k+\ell}2-\tfrac14\big]}(\log q)^{\delta+\delta'}.
\end{align*}
In the remainder of this section, we describe explicit $p$-adic exponent data and apply them to estimation of the central value $L(1/2,\chi)$. The above bound (which is somewhat lossy for all nontrivial $(k,\ell)$ but conveniently compact), used with the same $p$-adic exponent data, then yields analogous estimates for $L(1/2+it,\chi)$ valid along the entire critical line with an explicit dependence in $t$, including the bound announced in the introduction.

\medskip

Using Theorem~\ref{EstimateInTermsOfkl}, we can obtain a subconvex estimate on $L(1/2,\chi)$ from every $p$-adic exponent datum in which $k+\ell<1$. We show how to obtain such $p$-adic exponent data by iterating the $A$-and $B$-processes (Theorems \ref{Aprocess} and \ref{Bprocess}). We have seen that the $p$-adic exponent data can take rather complicated forms in general, to account for all adjustments which need to be made at a finite number of special primes, possibly depending on $y$; the set of such primes was denoted by $P_0(y)$ in the definition of $p$-adic exponent data. We will, for simplicity, state our $p$-adic exponent data in their cleanest form, in which they are valid away from finitely many primes, and state the exceptions; however, for the application to Theorem~\ref{EstimateInTermsOfkl}, it is important to remember that the method does apply to every single prime without exception.

Applying the $A$-process to the datum $\omega_{1/2}$, we obtain with some labor the datum
\begin{align*}
A(\omega_{1/2})=AB(\omega_{01})&=\Big(\tfrac16,\tfrac23,\tfrac16(1-\kappa),\tfrac12,\\
&\qquad\qquad\big(\big\lceil\max(5\kappa-1,\varepsilon_u(\tfrac{13}3+\tfrac53\kappa-3\lfloor\tilde{\lambda}\rfloor)) \big\rceil,\\
&\qquad\qquad\hphantom{\big(}\max(2\kappa+\lceil\tilde{\lambda}\rceil-\lfloor\lambda\rfloor-2\lfloor\tilde{\lambda}\rfloor+1,1),1,2\rho_p\big)\Big)
\end{align*}
valid for all $p\not\in\{2,3\}$ such that $\ord_py=\ord_p(y+1)=0$,
\begin{align*}
A^2B(\omega_{01})&=\Big(\tfrac1{14},\tfrac{11}{14},\tfrac17(1-\kappa),\tfrac12,\\
&\qquad\big(\big\lceil\max(8\kappa-3,\varepsilon'_u(3\kappa-\tfrac92\lfloor\tilde{\lambda}\rfloor+5),\varepsilon_u(\tfrac{47}7\kappa+4\lceil\tilde{\lambda}\rceil-12\lfloor\tilde{\lambda}\rfloor+\tfrac{58}7))\big\rceil,\\
&\qquad\hphantom{\big(}\max(3\kappa+2\lceil\tilde{\lambda}\rceil-\lfloor\lambda\rfloor-4\lfloor\tilde{\lambda}\rfloor+1,1),1,2\rho_p\big)\Big)
\end{align*}
valid for all $p\not\in\{2,3\}$ such that $\ord_py=\ord_p(y+1)=\ord_p(y+2)=\ord_p(2y+1)=0$ ($\varepsilon'_u$ refers to the value of $\varepsilon_u$ in the previous datum),  and so on. We recall from the statement of Theorem~\ref{Aprocess} that, when constructing a new $p$-adic exponent datum $Aq$ from an existing datum $q=(k,\ell,r,\delta,(n_0,u_0,\kappa_0,\lambda_0))$ using the $A$-process, $\varepsilon_u$ is defined as $\varepsilon_u=0$ if $u_0(y^{\pm}+1,p,\kappa,\tilde{\lambda})-\kappa+\iota'(2)+\varepsilon(y^{\pm})\leqslant 0$, and $\varepsilon_u=1$ otherwise.

Our $p$-adic exponent data take an even simpler form if we restrict them to $\kappa=\tilde{\lambda}$, which is equivalent to $\lambda\geqslant\kappa$ and $\rho_p(y)=0$. Note that this condition is always sa\-tis\-fied in the cases needed for Theorem~\ref{EstimateInTermsOfkl} away from finitely many primes. Moreover, this condition ``propagates'' through the recursive $A$- and $B$-processes, since a pair $(\kappa,\tilde{\lambda})$ always satisfies the condition $\kappa=\tilde{\lambda}$ away from finitely many primes (possibly depending on $y$) if the pair $(\kappa,\lambda)$ does. Finally, note that, as shown below, with this restriction, every datum obtained from $\omega_{01}$ using $A$- and $B$-processes has $u_0=1$; in particular, this means that, away from finitely many primes, we always have $\varepsilon_u=0$ upon application of Theorem~\ref{Aprocess}. With this convenient restriction, we thus obtain the following $p$-adic exponent data:
{\allowdisplaybreaks
\begin{align*}
\omega_{01}[\kappa=\tilde{\lambda}]&=\big(0,1,0,0,(\kappa+1,1,1,\rho_p)\big),\\
&\qquad \ord_py=0,\\
B(\omega_{01})[\kappa=\tilde{\lambda}]&=\big(\tfrac12,\tfrac12,0,1,(\kappa+1,1,1,\rho_p)\big),\\
&\qquad p\not\in\{2,3\},\,\,\ord_py=0,\\
AB(\omega_{01})[\kappa=\tilde{\lambda}]&=\big(\tfrac16,\tfrac23,\tfrac16(1-\kappa),\tfrac12,(5\kappa-1,1,1,2\rho_p)\big),\\
&\qquad p\not\in\{2,3\},\,\,\ord_p\{y,y+1\}=0,\\
A^2B(\omega_{01})[\kappa=\tilde{\lambda}]&=\big(\tfrac1{14},\tfrac{11}{14},\tfrac17(1-\kappa),\tfrac12,(8\kappa-3,1,1,2\rho_p)\big),\\
&\qquad p\not\in\{2,3\},\,\,\ord_p\{y,y+1,y+2,2y+1\}=0,\\
A^3B(\omega_{01})[\kappa=\tilde{\lambda}]&=\big(\tfrac1{30},\tfrac{13}{15},\tfrac1{10}(1-\kappa),\tfrac12,(\lceil\tfrac {23}2\kappa-6\rceil,1,1,2\rho_p)\big),\\
&\qquad p\not\in\{2,3\},\,\,\ord_p\{y,y+1,y+2,y+3,\\
&\mskip 150mu 2y+1,2y+3,3y+1,3y+2\}=0,\\
BA^3B(\omega_{01})[\kappa=\tilde{\lambda}]&=\big(\tfrac{11}{30},\tfrac{8}{15},\tfrac1{10}(1-\kappa),\tfrac12,(\lceil\tfrac{23}2\kappa-6\rceil,1,1,2\rho_p)\big),\\
&\qquad p\not\in\{2,3\},\,\,\ord_p\{y,y+1,y+2,y+3,\\
&\mskip 150mu 2y+1,2y+3,3y+1,3y+2\}=0,\\
ABA^3B(\omega_{01})[\kappa=\tilde{\lambda}]&=\big(\tfrac{11}{82},\tfrac{57}{82},\tfrac7{41}(1-\kappa),\tfrac12,(\lceil\tfrac{71}4\kappa-\tfrac{39}4\rceil,1,1,2\rho_p)\big),\\
&\qquad p\not\in\{2,3\},\,\,\ord_p\{y,y+1,y+2,y+3,y+4,2y+1,2y+3,\\
&\mskip150mu 2y+5,3y+1,3y+2,3y+4,3y+4,3y+5,\\
&\mskip150mu 4y+1,4y+3,5y+2,5y+3\}=0.
\end{align*}
}

With a supply of $p$-adic exponent data, we can derive subconvex estimates on $L(1/2,\chi)$, reflect back on our method, and prove Theorem~\ref{MainResult}. Applying Theorem~\ref{EstimateInTermsOfkl} using the datum $AB(\omega_{01})$ and $\kappa=1$, we get that, for every $n\geqslant 4$ and every primitive Dirichlet character $\chi$ modulo $p^n$ with $p\not\in\{2,3\}$,
\[ L\left(\frac12,\chi\right)\ll p^{(n+1)/6}(\log p^n)^{3/2}, \]
which recovers the Weyl exponent $\theta=\tfrac16$ for a fixed $p$, as in \cite{BarbanLinnikChudakov} and \cite{FujiiGallagherMontgomery}, but with an explicit implied constant. Note that we cannot use special devices which allow one to precisely recover the Weyl exponent if one does not hope to iterate the process  (as in \cite{HeathBrownHybrid}). The estimate does improve upon the Burgess exponent $\theta=\frac{3}{16}$ for $n\geqslant 9$, although this is a minor point for us.

Note that, in the datum $AB(\omega_{01})$, $\tfrac16+\tfrac23=\tfrac56$. To improve upon the Weyl exponent, we need a $p$-adic exponent datum with $k+\ell<\tfrac56$. One such datum is provided by $ABA^3B(\omega_{01})$ above. Applying Theorem \ref{EstimateInTermsOfkl} with this datum and $\kappa=1$, we get that for every $n\geqslant 8$ and every primitive Dirichlet character $\chi$ modulo $p^n$ with $p\not\in\{2,3,5,7\}$,
\begin{equation}
\label{ExplicitEstimate}
L\left(\frac12,\chi\right)\ll p^{7/41}\big(p^n\big)^{27/164}(\log p^n)^{1/2}.
\end{equation}
This proves the main statement of Theorem~\ref{MainResult},
\begin{equation}
\label{AbstractEstimate}
L\left(\frac12,\chi\right)\ll p^r\big(p^n\big)^{\theta}(\log p^n)^{1/2}
\end{equation}
with $\theta=\frac{27}{164}<\frac16$ and $r=\frac7{41}$ for all primitive characters $\chi$ modulo $p^n$, $p\not\in\{2,3,5,7\}$, $n\geqslant 8$. Since $A$- and $B$-processes produce $p$-adic exponent data effective for every prime $p$ without exception, the same bound holds for all primitive characters $\chi$ modulo $p^n$ also in the case $p\in\{2,3,5,7\}$ for $n\geqslant n_0$, with different values of $r$ and $n_0$ (and so also with the same values of $r$ and $n_0$ by adjusting the implied constant). Further, a bound of the same form holds for all values of $n$ by adjusting the value of $r$. Finally, if $\chi$ is induced from a primitive charater $\chi_1$ modulo $p^{n_1}$, $0\leqslant n_1\leqslant n$, then $L(s,\chi)=L(s,\chi_1)$ if $n_1\geqslant 1$ and $L(s,\chi)=(1-p^{-s})L(s,\chi_1)$ if $n_1=0$, so that the statement follows also for non-primitive characters. This proves Theorem \ref{MainResult} for all Dirichlet characters to any prime power modulus with $\theta=\frac{27}{164}$.

Since $\tfrac{27}{164}\approx 0.1646<\tfrac16$, the estimate \eqref{ExplicitEstimate} breaks the Weyl exponent barrier for $n\geqslant n'_0$. As another benefit of our explicit calculations of full exponent data (including the values of $n_0$ and $r$), we can see that \eqref{ExplicitEstimate} improves on the Weyl exponent for all $n$ for which $\tfrac{7}{41}+\tfrac{27}{164}n<\tfrac16n$; this will be the case for all $n\geqslant 85$.

Note that no further improvement is obtained in \eqref{ExplicitEstimate} by taking a larger value of $\kappa$, and, equivalently, no harm is suffered by taking a smaller value of $\kappa$. This is in marked contrast to the works such as \cite{FujiiGallagherMontgomery,HeathBrownHybrid} in which the Weyl exponent is obtained, which essentially rely on a choice $\kappa>n/3+\text{O}(1)$. In our language, this ensures that, in appropriate ranges, $f_{\chi,h}(t)$ of Lemma \ref{AprocessClassesOfFunctions} is essentially a quadratic polynomial; this in turn allows for a sharper treatment of one special instance of the $A$-process but precludes iteration. It is essential for this iterative method to adopt the exactly opposite paradigm that $n$ is sufficiently \textit{large} compared to $\kappa$, so that $f(t)/p^n$ behaves like a $p$-adic analytic function, rather than sufficiently \textit{small} compared to $\kappa$ (which presents simplifications in special cases but can obstruct the view of the analogy). It is quite possible that better --- possibly substantially better --- values of $r$ and $n_0$ (but not $\theta$) in \eqref{AbstractEstimate} can be obtained by fixing the value of $\kappa$ in a range relative to $n$ so that, by the time the iteration of $A$- and $B$-processes reaches the final application of Weyl differencing, we do have $\kappa>n/3+\text{O}(1)$ and can obtain a sharper estimate. This would be a welcome development, but we felt that it would distract from the main thrust of this paper.

The above proof, relying on Theorem~\ref{EstimateInTermsOfkl}, applies verbatim to any $p$-adic exponent pair $(k,\ell)$ and shows that the bound \eqref{AbstractEstimate} holds with
\[ \theta=\frac{k+\ell}2-\frac14. \]
This brings to the fore the question of finding $p$-adic exponent pairs with $k+\ell$ as small as possible. It is immediate from Theorem~\ref{ABprocessesTheorem} that the set of $p$-adic exponent pairs we can construct from $(0,1)$ coincides with the set of (Archimedean) exponent pairs obtainable from $(0,1)$ by the classical $A$- and $B$-processes, for which we refer to \cite{GK}. For example, a further specific pair which improves on \eqref{ExplicitEstimate} is Phillips' exponent pair $ABA^3BA^2BA^2B(0,1)=(\tfrac{97}{696},\tfrac{480}{696})$ \cite{Phillips}, which gives $\theta=\tfrac{229}{1392}\approx 0{.}1645$.

The question of finding a value of $\theta$ as small as it is possible to obtain from the $A$- and $B$-processes was considered and solved by Rankin \cite{RankinExponent}. Rankin proved that there is a $\theta_0\approx 0{.}1645$ such that $\theta>\theta_0$ for every pair obtainable by $A$- and $B$-processes, and, conversely, for every $\theta_1>\theta_0$, there is an exponent pair obtainable from $(0,1)$ by $A$- and $B$-processes which yields $\theta\in(\theta_0,\theta_1)$. Our Theorem~\ref{ABprocessesTheorem} shows that the corresponding $p$-adic processes will yield a $p$-adic exponent pair with the same value of $\theta$; using Theorem~\ref{EstimateInTermsOfkl} with this pair, we obtain a proof of Theorem~\ref{MainResult} for any $\theta>\theta_0\approx 0{.}1645$.

\section{Acknowledgments}

These results were prompted by a question posed by Kannan Sounda\-ra\-ra\-jan during an open problems session in a workshop on Zeta Functions and Their Riemann Hypotheses, held at the Courant Institute for Mathematical Sciences, New York University in 2002. Part of this work was done while the author visited Centre Interfacultaire Bernoulli at the \'Ecole Polytechnique F\'ed\'erale de Lausanne for the National Science Foundation-supported special semester on Group Actions in Number Theory in 2011. Most central claims and the paper as a whole took their final shape while the author visited the Max Planck Institute for Mathematics in Bonn during the 2011-12 academic year. The support and exceptional research infrastructure of both the EPFL and the Max Planck Institute for Mathematics are most gratefully acknowledged. Finally, the author would like to thank Valentin Blomer, Gergely Harcos, and Peter Sarnak for helpful feedback.

\bigskip\bigskip

\bibliographystyle{amsplain}
\bibliography{references}

\end{document}